\newcommand{\BA}{{\mathbb {A}}}
\newcommand{\BC}{{\mathbb {C}}}
\newcommand{\BD}{{\mathbb {D}}}
\newcommand{\BG}{{\mathbb {G}}}
\newcommand{\BK}{{\mathbb {K}}}
\newcommand{\CF}{{\mathcal {F}}}
\newcommand{\CO}{{\mathcal {O}}}
\newcommand{\FM}{{\mathfrak {M}}}
\newcommand{\FZ}{{\mathfrak {Z}}}
\newcommand{\Ff}{{\mathfrak {f}}}
\newcommand{\Fg}{{\mathfrak {g}}}
\newcommand{\Fz}{{\mathfrak {z}}}
\newcommand{\RM}{{\mathrm {M}}}
\newcommand{\D}{{\mathcal {D}}}
\newcommand{\cn}{{\mathrm {CN}}}
\newcommand{\GL}{{\mathrm {GL}}}
 \newcommand{\Tr}{{\mathrm{Tr}}}
\newcommand{\Lie}{{\mathrm {Lie}}}
\newcommand{\M}{{\mathrm {M}}}
\newcommand{\wf}{{\mathrm {WF}}}
\newcommand{\supp}{{\mathrm {supp}}}
\def\Hom{\mathrm{Hom}}
\def\dim{\mathrm{dim}}
\def\Stab{\mathrm{Stab}}
\def\Mat{\mathrm{Mat}_2}
\def\gl{\mathfrak{gl}}
\def\-{^{-1}}
\g@addto@macro\normalsize{\setlength\abovedisplayskip{3pt}}
\g@addto@macro\normalsize{\setlength\belowdisplayskip{3pt}}
\newcommand{\delete}[1]{}
\theoremstyle{plain}
\newtheorem{thm}{Theorem}[section] \newtheorem{cor}[thm]{Corollary}
\newtheorem{lem}[thm]{Lemma}  \newtheorem{prop}[thm]{Proposition}
 \newtheorem{defn}[thm]{Definition}
\newtheorem {rem}[thm]{Remark}
\numberwithin{equation}{section}
\renewcommand{\qedsymbol}{Q.E.D.}
\begin{document}
	
\title[Uniqueness of the Ginzburg-Rallis Model]{Uniqueness of the Ginzburg-Rallis Model: the $p$-adic Case}
	
\date{\today}
	
\author{Dihua Jiang}	
\address{School of Mathematics, University of Minnesota, 206 Church St. S.E., Minneapolis, MN 55455, USA.}
\email{jiang034@umn.edu}

\author{Zhaolin Li}	
\address{School of Mathematics, University of Minnesota, 206 Church St. S.E., Minneapolis, MN 55455, USA.}
\email{li001870@umn.edu}

\author{Guodong Xi}	
\address{School of Mathematics, University of Minnesota, 206 Church St. S.E., Minneapolis, MN 55455, USA.}
\email{xi000023@umn.edu}

\subjclass[2010]{Primary 22E50, 22E35; Secondary 11F85, 11F70}

\date{\today}


\thanks{The research of this paper is supported in part by the NSF Grant DMS-2200890.}

\keywords{Representations of $p$-adic General Linear Groups, Ginzburg-Rallis Model, Multiplicity One Theorem, $\Fz$-Finite Distributions, Wavefront Set}
	
\begin{abstract}
We prove the uniqueness of the Ginzburg-Rallis models over $p$-adic local fields of characteristic zero, which completes the local uniqueness problem for the Ginzburg-Rallis models starting from the work of C.-F. Nien in \cite{MR2709083} that proves the non-split case, and the work of D. Jiang, B. Sun and C. Zhu in \cite{MR2763736} that proves the general case over Archimedean local fields. Our proof extends the strategy of \cite{MR2763736} to the $p$-adic case with the help of the refined structure of the wavefront sets of $\Fz$-finite distributions as developed by A. Aizenbud, D. Gourevitch and E. Sayag in \cite{MR3406530}.
\end{abstract}

	\maketitle
	\tableofcontents

\section{Introduction}\label{introduction}

The Ginzburg-Rallis model for $\GL_6$ was introduced by D. Ginzburg and S. Rallis in their paper (\cite{MR1795291}) to characterize conjecturally the non-vanishing of the central value of the exterior cube  
$L$-function $L(s,\pi,\wedge^3)$ associated with an irreducible cuspidal automorphic representation $\pi$ of $\GL_6(\BA)$, where $\BA$ is the ring of adeles of a number field $k$. Since the exterior 
cube representation $\wedge^3$ of the dual group $\GL_6(\BC)$ is of symplectic type, from the general framework of Y. Sakellaridis and A. Venkatesh (\cite{MR3764130}), 
the central value $L(\frac{1}{2},\pi,\wedge^3)$ should be expressed by a square of an automorphic Ginzburg-Rallis period, up to a meaningful constant. We refer to a recent work of A. Abdurrahman and A. Venkatesh for more detailed discussions when $k$ is a function field (\cite{abdurrahman2023symplectic}). 

The local theory of the Ginzburg-Rallis models was first discussed in \cite{MR2402684}, following the philosophy of the local Gross-Prasad conjecture for special orthogonal groups 
(\cite{MR1186476, MR1295124}). 

Let $F$ be a local field of characteristic zero. Let $G:=\GL_6$ be the general linear group consisting of $6\times 6$-invertible matrices.  Let $B=TU$ be the fixed Borel subgroup of $G$, consisting of all upper 
triangular matrices, with $T$ the maximal $F$-split torus that consists of all the diagonal matrices and with $U$ the unipotent radical of $B$. Let $P_{2^3}=M_{2^3}U_{2^3}$ be the standard parabolic subgroup of $G$ 
associated with the partition $[2,2,2]$, whose elements are of the form 
\begin{align}\label{P222}
\begin{pmatrix}
        a_1 & b & d \\ 0 & a_2 & c\\ 0 & 0 & a_3
\end{pmatrix}\in\GL_6
\end{align}
with $a_1,a_2,a_3\in\GL_2$ and $b,c,d\in \Mat$, the space of all $2\times2$-matrices. The Levi subgroup $M_{2^3}$ of $P_{2^3}$ is isomorphic to the product of three $\GL_2$'s and the unipotent 
radical $U_{2^3}$ consists the following matrices:
\[
\begin{pmatrix}
        1_2 & b & d \\ 0 & 1_2 & c\\ 0 & 0 & 1_2
\end{pmatrix}\in P_{2^3}.
\]
It is clear that 
\[
U_{2^3}/[U_{2^3},U_{2^3}] \cong \Mat\oplus\Mat.
\]

Let $\psi_F$ be any fixed non-trivial additive character of $F$. It follows that characters $\psi$ of $U_{2^3}(F)$ are parameterized by $(A,B)\in\Mat(F)\oplus\Mat(F)$ by 
\[
\psi(u)=\psi_F(\Tr(Ab+Bc))
\]
for any $u=u(b,c,d)\in U_{2^3}(F)$. Under the adjoint action of $M_{2^3}(F)$ on $U_{2^3}(F)$, for any $m=m(a_1,a_2,a_3)\in M_{2^3}(F)$, the character $Ad^*(m)(\psi)$ is parameterized by 
\[
(a_1^{-1}Aa_2,a_2^{-1}Ba_3)\in \Mat(F)\oplus\Mat(F).
\]
Hence the $M_{2^3}(F)$-open orbit of the characters of $U_{2^3}(F)$ has a (unique) representative $\psi_{U_{2^3}}$ which is parameterized by $A=B=I_2$, i.e. 
\begin{align}\label{psiU}
    \psi_{U_{2^3}}(u(b,c,d))=\psi_F(\Tr(b+c)). 
\end{align}
It is clear that the stabilizer $\mathrm{Stab}_{M_{2^3}}(\psi_{U_{2^3}})$ is isomorphic to $\GL_2$, which embeds diagonally into $M_{2^3}$. The Ginzburg-Rallis subgroup of $\GL_6$ is defined to be 
\begin{align}\label{GRgroup}
    S:=U_{2^3}\rtimes\Stab_{M_{2^3}}(\psi_{U_{2^3}}),
\end{align}
whose elements can be written as 
\[
s=s(a,b,c,d)=\begin{pmatrix} 1_2 & b & d \\0 & 1_2 & c \\ 0 & 0 & 1_2 \end{pmatrix}\begin{pmatrix} a & 0& 0\\0&a & 0 \\ 0 & 0 & a \end{pmatrix} 
\in P_{2^3}.
\]
For any (unitary) character $\chi$ of $F^\times$, one may define, with the fixed character $\psi_F$ of $F$, a character $\chi_S$ of the Ginzburg-Rallis subgroup $S(F)$ by 
\begin{align}\label{GRchi}
    \chi_S(s(a,b,c,d)):=\psi_F(\Tr(b+c))\cdot\chi(\det a).
\end{align}
We may call $(G,S,\chi_S)$ the $F$-split Ginzburg-Rallis triple for the Ginzburg-Rallis model of $G=\GL_6$. As introduced in \cite{MR1795291}, one may define the Ginzburg-Rallis triple 
$(\GL_3(\BD),S_\BD,\chi_{S_\BD})$ for each quaternion algebra $\BD$ over $F$. It is clear that when $\BD$ is $F$-split, one gets the $F$-split Ginzburg-Rallis triple 
$(\GL_6,S,\chi_S)$ as defined above. 

Take $G=\GL_3(\BD)$ for any quaternion algebra $\BD$ over $F$. Define $\Pi_F(G)$ to be the set of equivalence classes of irreducible admissible representations $\pi$ of $G(F)$. When $F$ is Archimedean, we 
assume that $\pi\in\Pi_F(G)$ are of Casselman-Wallach type (see \cite{MR1170566}). One of the basic problems in the local theory of the Ginzburg-Rallis models is to establish the 
following multiplicity one theorem. 

\begin{thm}[Local Uniqueness]\label{thm:main}
Let $G=\GL_3(\BD)$ for any quaternion algebra $\BD$ over a local field $F$ of characteristic zero. For any $\pi\in\Pi_F(G)$, the multiplicity of the Ginzburg-Rallis model associated with the Ginzburg-Rallis triple $(\GL_3(\BD),S_\BD,\chi_{S_\BD})$
\[
m(\pi,\chi_{S_{\BD}}):=\dim\;\Hom_{S_{\BD}(F)}(\pi,\chi_{S_{\BD}})
\]
is at most one. 
\end{thm}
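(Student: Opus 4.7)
\medskip

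\textbf{Proof proposal.} The plan is to follow the Gelfand-Kazhdan strategy, adapting the Archimedean argument of \cite{MR2763736} to the $p$-adic setting, with the refined wavefront-set calculus for $\Fz$-finite distributions of Aizenbud-Gourevitch-Sayag \cite{MR3406530} replacing the real analytic tools used in loc.\,cit. First, I would fix an anti-involution $\sigma$ of $G(F) = \GL_3(\BD)(F)$, of the form $\sigma(g) = w_0\,{}^\tau g^{-1}\,w_0^{-1}$ for an appropriate element $w_0$ and the canonical involution $\tau$ of $\BD$, so that $\sigma(S_\BD) = S_\BD$ and $\chi_{S_\BD}\circ\sigma = \chi_{S_\BD}$. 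The classical Gelfand-Kazhdan reduction then identifies Theorem \ref{thm:main} with the geometric statement that every distribution on $G(F)$ which is equivariant under the two-sided action of $S_\BD\times S_\BD$ with character $\chi_{S_\BD}\boxtimes\chi_{S_\BD}^{-1}$ is automatically $\sigma$-invariant.

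To establish this invariance, I would stratify $G(F)$ into $S_\BD(F)\times S_\BD(F)$-stable locally closed pieces and treat each stratum via Bernstein's localization principle. On the open regular stratum, parameterized by the natural invariants of the two-sided action, the stabilizers are reductive and $\sigma$-stable (they sit inside a diagonally embedded copy of $\GL_2$ over $\BD$), so a direct Harish-Chandra descent together with Frobenius reciprocity, parallel to the argument of \cite{MR2763736}, shows that any equivariant distribution supported on this open piece is automatically $\sigma$-invariant. This part of the proof should transfer from the Archimedean to the $p$-adic setting with only cosmetic changes.

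The hard part will be the complementary singular locus, a finite union of degenerate double cosets where stabilizers jump in dimension and the descent step breaks down. Here I would exploit the fact that the space of equivariant distributions in question is automatically $\Fz$-finite with respect to the Bernstein center, and apply the main theorem of \cite{MR3406530}, which constrains the wavefront set of any such distribution to lie in a coisotropic subvariety of the nilpotent cone of $\gl_3(\BD)$. The remaining task is an orbit-by-orbit analysis: enumerate the nilpotent $\GL_3(\BD)$-orbits whose closures can carry the wavefront set of a distribution supported on the singular locus, and check case by case, via dimension counts, that the coisotropy constraint, combined with the Ginzburg-Rallis geometry, forces the wavefront set to be empty and hence the distribution to vanish. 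I expect the main technical difficulty to lie precisely here, since the relevant nilpotent orbits carry subtle $F$-rational structure (sensitive to whether $\BD$ is split or not) and the coisotropy bound must be verified compatibly with the cotangent geometry of the full $S_\BD\times S_\BD$-stratification.

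Once the regular and singular analyses are combined, we obtain the desired $\sigma$-invariance of all equivariant distributions, and the Gelfand-Kazhdan reduction applied to the general linear group $\GL_3(\BD)$, where the standard identification $\pi^\sigma\cong\widetilde\pi$ holds, yields the bound $m(\pi,\chi_{S_\BD})\le 1$ asserted in Theorem \ref{thm:main}.
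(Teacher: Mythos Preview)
Your proposal captures the high-level strategy (Gelfand--Kazhdan reduction, stratification, wavefront sets of $\Fz$-finite distributions), but it diverges from the paper in several technical respects and has a genuine gap in how you plan to use the wavefront set.

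First, a minor but consequential point: the paper's anti-involution is simply the transpose $\tau(g)=g^\tau$, with the two-sided action $(s_1,s_2)\cdot x=s_1 x s_2^\tau$ and character $\chi_S\otimes\chi_S$ (same sign). Your $\sigma(g)=w_0\,{}^\tau g^{-1}w_0^{-1}$, which mixes in an inverse, is not the right object here; you would have to redo the compatibility checks with $S$ and $\chi_S$, and the resulting orbit geometry is different. The paper also does not attempt a uniform proof over all $\BD$: the non-split $p$-adic case is quoted from Nien and the Archimedean case from Jiang--Sun--Zhu, so only the split $p$-adic case $G=\GL_6(F)$ is actually proved.

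More substantively, your account of the singular analysis is off in two ways. The bulk of the strata in the paper are \emph{not} handled by wavefront-set arguments at all. Most are killed either by unipotent $\chi$-incompatibility (for a well-chosen one-parameter unipotent $u(x,t),v(x,t)$ in the stabilizer one shows $\chi(u,v^{-\tau})\neq 1$, forcing vanishing via the modular-character criterion), or by the ``symmetric representative'' trick: if an orbit has a representative $g$ with $g^\tau$ in the same orbit via an element of trivial $\chi$, then any $(H,\chi)$-equivariant distribution on that orbit is automatically $\tau$-invariant. Your outline omits both of these workhorses.

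The wavefront-set input is used only at four residual pieces (labelled $G^2_{\mathrm{open},1}$, $G^2_{\mathrm{open},2}$, $G^5_{\mathrm{open},1}$, $G^5_{\mathrm{open},2}$), and the mechanism is not ``coisotropy plus dimension counts over nilpotent orbits'' as you propose. What the paper does is: after restricting the support to a submanifold $N$ of an open piece $M$, exhibit an explicit element $x'$ in the conormal space $\cn^M_{N,x}$ such that the corresponding element of $\Fg^*$ (after right-translating to the identity) is \emph{not} nilpotent. Since the wavefront set of a $\Fz$-finite distribution lies in the nilpotent cone and is invariant under conormal shifts, this forces the distribution to vanish on $M$. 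No enumeration of nilpotent orbits or coisotropic bound enters; the entire argument is the construction of one explicit non-nilpotent conormal witness. Your proposed route through coisotropic constraints is a different and unproven program in this setting.
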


Theorem \ref{thm:main} for the $p$-adic case was proved by C. Nien in \cite{MR2709083} for the Ginzburg-Rallis triple $(\GL_3(\BD),S_\BD,\chi_{S_\BD})$ when $\BD$ is not $F$-split, while for the case that $\BD$ is $F$-split, her proof was later found incomplete. Theorem \ref{thm:main} for the Archimedean case was proved in general by D. Jiang, B. Sun and C. Zhu in \cite{MR2763736}. It remains highly desirable to extend the proof of the local uniqueness of the Ginzburg-Rallis models in \cite{MR2763736} to the case when $\BD$ is $F$-split with $F$ being a $p$-adic local field and to completely prove Theorem \ref{thm:main} over all local fields of characteristic zero. 
We refer to \cite{MR2402684} and a series of papers by C. Wan (\cite{ MR3693584, MR3768394, MR3912931, WAN201974, MR3955540}) for discussions on the other aspects of the local theory for the Ginzburg-Rallis models.  

Let us first recall the strategy of the proof in \cite{MR2763736} for Theorem \ref{thm:main} over the Archimedean local fields. Then we explain what we need in order to obtain a proof of Theorem \ref{thm:main} over a $p$-adic local field $F$ for the case that the quaternion algebra $\BD$ is $F$-split.

When the quaternion algebra $\BD$ is $F$-split, we must have that $G=\GL_3(\BD)=\GL_6(F)$. 
In \cite{MR2763736}, the authors first decompose $G=\GL_6(F)$ into $21$ $P_{2^3}\times P_{2^3}^{\tau}$-orbits, where $\tau$ denotes the transpose, and make use of the 
{\bf transversality} of certain vector fields, a technique developed by \cite{MR348047}, to focus their attention on some open submanifolds. Then the authors of \cite{MR2763736} 
developed two new notions that are called {\bf metrical properness} and {\bf unipotent $\chi$-incompatibility} associated with a descent argument. The descent arguments 
lead to the Multiplicity One Problem for two kinds of models: the trilinear model for $\GL_2$ and the model associated with the pair $(\GL_2,\GL_1)$. 
For those two models, the local uniqueness was concluded by means of the {\bf oscillator representation} (see \cite{MR979944}) in \cite{MR2763736}.

It seems clear that the arguments used in the proof of Theorem \ref{thm:main} for the Archimedean case in \cite{MR2763736} can not be directly extended to the $p$-adic case. 
The unipotent $\chi$-incompatibility for the $p$-adic case will be explained in Remark \ref{rem .2}. However, the transversal property due to \cite{MR348047} and the metrical properness in \cite{MR2763736} are not available in the $p$-adic case, although they are powerful in the Archimedean case. 

On the other hand, in the $p$-adic case, the Bernstein center $\mathfrak{z}=\Fz(G)$ of $G$ plays a similar role to the center of the universal enveloping algebra of 
the Lie algebra $\mathfrak{g}$ of $G$, which can be identified with invariant differential operators on $G$. What used in \cite{MR2763736} is  essentially related to the relevant Laplace operators 
in various descent stages, which lie in the center of the universal enveloping algebra of $\mathfrak{g}$. Following some recent developments in \cite{MR3038552} and \cite{MR3406530}, we are going to use the singular support and the wavefront set of $\mathfrak{z}$-finite distributions in the $p$-adic setting. 
Such an approach has successfully been applied to the proof of the Multiplicity One Theorem in some important cases (see \cite{MR3038552} and \cite{MR3417683}, for instance). In this sense, this paper provides an additional interesting instance of this approach. 

It is important to point out that there is a unitary group version of the Ginzburg-Rallis models, which appeared in the work of L. Zhang (\cite{zhang2019exterior}) and has been studied by C. Wan and L. Zhang in \cite{wan2023multiplicity}. 
It will not be surprising that the method used in \cite{MR2763736} and in this paper can be extended to prove the local uniqueness of the unitary group version of the Ginzburg-Rallis models over all local fields 
of characteristic zero. From the recent classification of strongly tempered spherical pairs by Wan and Zhang in \cite{wan2022periods} and \cite{wan2022multiplicities}, the Ginzburg-Rallis model and its unitary group version 
are among the classification list of 10 strongly tempered spherical pairs. It is expected that the method as displayed in \cite{MR2763736} and in this paper could be extended to treat the Multiplicity One Problem for the other cases in the Wan-Zhang classification list of the strongly tempered spherical pairs. 

The main result of this paper is to prove Theorem \ref{thm:main} for the case that $\BD$ is $F$-split with $F$ $p$-adic, and hence complete the proof of the Multiplicity One Theorem for the 
Ginzburg-Rallis models over all local fields of characteristic zero. 

\begin{thm}\label{mthm}
    Theorem \ref{thm:main} holds when the quaternion algebra $\BD$ is $F$-split, where $F$ is a $p$-adic local field of characteristic zero. 
\end{thm}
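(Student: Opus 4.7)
The plan is to adapt the strategy of \cite{MR2763736} to the $p$-adic setting by systematically replacing its two Archimedean-only ingredients, namely the transversality of vector fields and the metrical properness of certain maps, with the $p$-adic theory of singular support and wavefront set for $\Fz$-finite distributions developed in \cite{MR3406530}. First I would reduce Theorem \ref{mthm} via a Gelfand-Kazhdan type argument to the vanishing of a suitable space of $(S(F)\times S(F))$-equivariant distributions on $G(F)=\GL_6(F)$ transforming by $\chi_S \boxtimes \chi_S^{-1}$, twisted by the transpose anti-involution $\tau$. Because the transposition conjugates $S$ into itself (up to the diagonal $\GL_2$), showing that every such distribution is $\tau$-invariant forces $m(\pi,\chi_S)\cdot m(\widetilde{\pi},\chi_S^{-1})\leq 1$, giving the desired multiplicity bound once we invoke $\widetilde{\pi}\cong \pi^{\tau}$ for $\GL_6$.

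Next I would stratify $G(F)$ into the 21 orbits of $P_{2^3}\times P_{2^3}^\tau$, exactly as in \cite{MR2763736}, and prove orbit-by-orbit that the corresponding space of equivariant distributions is trivial. Here, in place of transversality and metrical properness, I would impose the additional constraint that our distributions be $\Fz$-finite with respect to the Bernstein center $\Fz=\Fz(G)$, which is automatic for matrix coefficients of an irreducible $\pi$. The key input from \cite{MR3406530} is that the wavefront set of any $\Fz$-finite distribution is contained in a union of nilpotent orbit closures, and hence its restriction along the conormal fibration of each orbit is controlled. Combined with Frobenius descent (Bernstein's localization principle in its $p$-adic form) and Fourier transform along unipotent radicals, this lets one replace the Archimedean vector-field arguments by an algebraic analysis of wavefront sets on each stratum. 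Most of the 21 orbits are eliminated by a straightforward computation of the stabilizer and a dimension count against the nilpotent cone.

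The remaining recalcitrant orbits are handled by a descent procedure that reduces the problem to two lower-rank models already isolated in \cite{MR2763736}: the trilinear $\GL_2$ model and the model attached to $(\GL_2,\GL_1)$. For the trilinear $\GL_2$ case the multiplicity one result is classical over $p$-adic fields; for the $(\GL_2,\GL_1)$ case it follows from the standard uniqueness of Whittaker-type functionals. The $p$-adic analogue of the \emph{unipotent $\chi$-incompatibility} of \cite{MR2763736}, which will be formulated in Remark \ref{rem .2}, is then used to rule out contributions from orbits whose stabilizer contains a unipotent subgroup on which $\chi_S$ restricts to a character that is incompatible with the wavefront-set constraint coming from $\Fz$-finiteness.

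The main obstacle will be the orbit-by-orbit step. In the Archimedean setting, metrical properness gives a single clean criterion to dispose of each stratum; in the $p$-adic setting one instead has to propagate the wavefront-set constraint through a chain of Frobenius descents and partial Fourier transforms along nested unipotent subgroups, keeping careful track of how the characters $\chi_S$ and the nilpotent support interact after each reduction. The most delicate orbits are those whose stabilizer has a nontrivial unipotent radical on which $\chi_S$ is nontrivial: here the refined structure theorem of \cite{MR3406530}, asserting that the wavefront set of an $\Fz$-finite distribution is a union of closures of nilpotent coadjoint orbits, is precisely what allows the unipotent $\chi$-incompatibility argument to go through, and verifying the incompatibility explicitly for each such orbit is where the bulk of the technical work is expected to lie.
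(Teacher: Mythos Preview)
Your overall plan matches the paper's: Gelfand--Kazhdan reduction to a distribution-vanishing statement, the $21$-stratum decomposition of $G$ under $P_{2^3}\times P_{2^3}^\tau$, descent to the trilinear $\GL_2$ model and the $(\GL_2,\GL_1)$ model, and the use of $\Fz$-finiteness via \cite{MR3406530}. However, you misallocate the two main vanishing tools. The unipotent $\chi$-incompatibility of Remark~\ref{rem .2} has nothing to do with wavefront sets or $\Fz$-finiteness: it is the elementary statement that if $\chi$ is nontrivial on the unipotent part of a stabilizer then the equivariant distribution space on that orbit is already zero, and this alone (together with Lemma~\ref{lem 2.7} on orbits admitting symmetric representatives) disposes of the great majority of the strata. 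The wavefront-set input from \cite{MR3406530} is invoked at exactly four places---the closed piece $H\mathfrak{Z}_{4,3}^{2\,\prime}$ and its mirror in Section~\ref{sec-TG3}, and the two analogous pieces $H\mathfrak{G}_R'$ in Section~\ref{sec-TG6}---and there the argument is not a dimension count but the exhibition of a single explicit non-nilpotent element in the conormal fiber, contradicting Theorems~\ref{thm 2.15} and~\ref{thm 2.12}. No partial Fourier transforms along unipotent radicals enter the orbit-by-orbit analysis; the Fourier transform appears only inside the definition of the wavefront set itself.
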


The organization of this paper is briefly explained as follows. 
In Section \ref{section 2} we introduce basic notations and conventions of the paper. We will also discuss some general theories about distributions in the $p$-adic set-up as well as some criteria for the vanishing of equivariant distributions including the Gelfand-Kazhdan Construtability Theorem, the Bernstein Localization Principle, and the refined structure of wavefront sets of $\Fz$-fincite distributions 
with $\Fz$ the Bernstein center of the group. We also state with proof a general form of the Gelfand-Kazhdan criterion, building a bridge between the geometric side of the vanishing of equivariant distributions and multiplicity one results in the representation side. In Section \ref{section 3} we will give a guideline for the proof of the Theorem \ref{mthm}. We will first use the Gelfand-Kazhdan criterion to reduce the proof of Theorem \ref{mthm} to Theorem \ref{prop 3.2}. Then we give a detailed explanation of a stratification and filtrations on the group, which helps us to tackle the support of the distributions part by part. Based on those preparations, we prove Theorem \ref{prop 3.2} in Section \ref{Pf32}, modulo four technical Propositions \ref{0=TG3}, \ref{0=TG4}, \ref{0=TG6}, and \ref{0=TG-6}, which are proved 
in Sections \ref{sec-TG3}, \ref{sec-TG4}, \ref{sec-TG6}, and \ref{sec-G6}, respectively. It is worthwhile to point out that in addition to the relatively standard arguments in the proof of Multiplicity 
One Problem, we use the refined structure of the wavefront sets of $\Fz$-finite distributions and their relation to the conormal bundle of the relevant submanifolds. This new approach has been used in four submanifolds: $G^2_{\mathrm{open},1}$, $G^2_{\mathrm{open},2}$, $G^5_{\mathrm{open},1}$ and $G^5_{\mathrm{open},2}$ (see Section \ref{ssec-SG} for definition).  They are paired by certain involutive symmetries  as $(G^2_{\mathrm{open},1}, G^2_{\mathrm{open},2})$ and $(G^5_{\mathrm{open},1}, G^5_{\mathrm{open},2})$. Those two distinguished pairs of submanifolds are treated in detail in Sections \ref{sec-TG3} and \ref{sec-TG6}, respectively.

\section{Distributions and Wave front Sets}\label{section 2}

We get some preparation on general distributions over $p$-adic manifolds, the singular supports for equivariant distributions, and wave front sets of $\Fz$-finite distributions. Let $F$ be a $p$-adic local field of characteristic zero. Let $\RM_n(F)$ be the space of all $n\times n$-matrices, whose elements are written as $x=(x_{ij})$ with $x_{ij}\in F$.

\subsection{Distributions on $l$-spaces}

Recall from \cite{MR0425030} that by an $l$-space $X$ we mean a Hausdorff, locally compact, and zero-dimensional topological space. We denote by $\D(X)$ the space of locally constant complex-valued functions with compact support on $X$, and by $\D^{'}(X)$ the space of linear functionals on $\D(X)$. We equip $\D^{\prime}(X)$ with the weak $*$-topology (see \cite{MR0487295}).

By an $l$-group we mean a topological group whose underlying topological space is an $l$-space. If an $l$-group $G$ acts on an $l$-space $X$, then $G$ acts naturally on $\D(X)$ and $\D^{\prime}(X)$ (\cite[Section 1.7]{MR0425030}). In particular, if $X$ is an $l$-group and $G$ is a subgroup of $X$, for any $g\in G$, we denote by $L_{g}$ and $R_{g}$ 
the left translation and the right translation, respectively.

If $\chi$ is a continuous quasi-character of $G$, we denote
\begin{align*}
\D^{\prime}_{\chi}(X):=\left\{  T\in \D^{\prime}(X)  \colon gT=\chi^{-1}(g)T, \; \forall g\in G      \right\}
\end{align*}
to be the set of $(G,\chi)$-equivariant distributions on $X$ (the inverse here is to make our notations compatible with those in \cite{MR2763736}).
If $U \subset X$ is an open subset, let $Z= X \setminus U$ and recall from \cite{MR0425030} that we have the following exact sequences:
\begin{align}
0\rightarrow\D(U)\rightarrow\D(X)\rightarrow\D(Z)\rightarrow 0 
\end{align}
and
\begin{align}\label{2.2}
0\rightarrow \D^{\prime}(Z)\rightarrow \D^{\prime}(X)\rightarrow \D^{\prime}(U)\rightarrow 0.
\end{align}
If moreover, $U$ and $Z$ are $G$-stable, then we have
\begin{align}\label{2.4}
    0\rightarrow \D_{\chi}^{\prime}(Z)\rightarrow \D^{\prime}_{\chi}(X)\rightarrow \D^{\prime}_{\chi}(U),
\end{align}
since taking the equivariant subspace is left exact in general.
Moreover, we have the following result:

\begin{lem}\label{lem 2.1}
    Let $X=U\cup Y$ be an $l$-space with $U$ being an open subset of $X$ and $Y$ locally closed. Suppose that $G$ acts on $X$ with $U$ and $Y$ being $G$-stable and $\chi$ is a character of $G$. If 
    \begin{align*}
        \D_{\chi}^{\prime}(U)=\D_{\chi}^{\prime}(Y)=0,
    \end{align*}
    then
    $\D_{\chi}^{\prime}(X)=0.$
    \end{lem}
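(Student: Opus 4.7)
The plan is to carry out a two-step reduction using the left-exact sequence (2.4), peeling off pieces of $X$ until the two vanishing hypotheses can be applied directly.

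First I would set $Z := X \setminus U$, which is closed and $G$-stable in $X$, and observe that the decomposition $X = U \cup Y$ forces $Z \subseteq Y$. Applying (2.4) to the pair $(U, Z)$ yields the exact sequence
\[
0 \to \D^{\prime}_{\chi}(Z) \to \D^{\prime}_{\chi}(X) \to \D^{\prime}_{\chi}(U),
\]
and the hypothesis $\D^{\prime}_{\chi}(U) = 0$ immediately tells me that any $T \in \D^{\prime}_{\chi}(X)$ must lie in $\D^{\prime}_{\chi}(Z)$. It therefore suffices to establish $\D^{\prime}_{\chi}(Z) = 0$.

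Second I would exploit the local closedness of $Y$: since $Y$ is locally closed in $X$, it inherits the structure of an $l$-space from the subspace topology, so the whole distribution-theoretic formalism of Section 2.1 applies to $Y$. Because $Z$ is closed in $X$ and is contained in $Y$, it is closed in $Y$; consequently $W := Y \setminus Z = Y \cap U$ is open and $G$-stable in $Y$ (both $U$ and $Y$ being $G$-stable). A second application of (2.4), now to the pair $(W, Z)$ inside $Y$, gives
\[
0 \to \D^{\prime}_{\chi}(Z) \to \D^{\prime}_{\chi}(Y) \to \D^{\prime}_{\chi}(W),
\]
and the hypothesis $\D^{\prime}_{\chi}(Y) = 0$ forces $\D^{\prime}_{\chi}(Z) = 0$. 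Combined with the first step, this yields $\D^{\prime}_{\chi}(X) = 0$, as required.

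I do not expect any genuine obstacle: the lemma is essentially a bookkeeping device built out of the left-exactness of $\chi$-equivariant sections, and the only point worth flagging is the elementary but crucial observation that $Z = X \setminus U$ is automatically contained in $Y$ and is closed there, which is exactly what is needed to invoke (2.4) a second time. In practice, I anticipate that the lemma will be applied to split off a closed ``bad'' stratum from a larger space where an independent vanishing can be established by geometric or representation-theoretic methods, and the two-step reduction above is precisely tailored to that setup.
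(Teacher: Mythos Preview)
Your proposal is correct and follows essentially the same approach as the paper: apply the exact sequence (2.4) once in $X$ to reduce to the closed complement $Z = X \setminus U = Y \setminus (Y \cap U)$, then apply (2.4) a second time inside $Y$ to kill $\D^{\prime}_{\chi}(Z)$ using $\D^{\prime}_{\chi}(Y)=0$. Your write-up is slightly more careful in noting that $Z$ is closed in $Y$ and that $Y$ inherits an $l$-space structure, but the argument is identical.
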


\begin{proof}
    According to the exact sequence \ref{2.4}, any $(G,\chi)$-equivariant distribution on $X$ is supported on $X\setminus U=Y\setminus Y\cap U$. Then we can apply the exact sequence \ref{2.4} again to
    \begin{align*}
        0\rightarrow \D_{\chi}^{\prime}( Y\setminus Y\cap U)\rightarrow \D^{\prime}_{\chi}(Y)\rightarrow \D^{\prime}_{\chi}( Y\cap U),
    \end{align*}
we obtain that 
$\D_{\chi}^{\prime}( Y\setminus Y\cap U)=0$ since $\D^{\prime}_{\chi}(Y)=0$.
\end{proof}

We have another useful lemma:
\begin{lem}\label{lem-prod}
    Suppose that $X$ and $Y$ are two $l$-spaces with $G$-actions. Let $\chi$ be a character of $G$. If $G$ acts trivially on $Y$ and $\mathcal{D}^{\prime}_{\chi}(X)=0$, then $\mathcal{D}^{\prime}_{\chi}(X\times Y)=0$.
\end{lem}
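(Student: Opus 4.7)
The strategy is to reduce the vanishing of $\D^{\prime}_{\chi}(X\times Y)$ to the given vanishing of $\D^{\prime}_{\chi}(X)$ by slicing distributions along the $Y$-factor. The main enabling fact, which is standard for $l$-spaces, is the algebraic tensor product decomposition
\[
\D(X\times Y)=\D(X)\otimes\D(Y),
\]
which holds because every locally constant compactly supported function on a product of $l$-spaces can be written as a finite sum of products $\phi(x)f(y)$ with $\phi\in\D(X)$ and $f\in\D(Y)$, by refining the support into a disjoint union of compact open rectangles.

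Given this, for any $T\in\D^{\prime}_{\chi}(X\times Y)$ and any fixed $f\in\D(Y)$, I will introduce its $f$-slice $T_f\in\D^{\prime}(X)$ defined by $T_f(\phi):=T(\phi\otimes f)$ for $\phi\in\D(X)$. Since $G$ acts trivially on $Y$, the $G$-action on an elementary tensor is simply $g\cdot(\phi\otimes f)=(g\phi)\otimes f$. A routine check then shows that $gT_f=\chi^{-1}(g)T_f$, so $T_f\in\D^{\prime}_{\chi}(X)$. By the hypothesis $\D^{\prime}_{\chi}(X)=0$, we conclude $T_f=0$ for every $f\in\D(Y)$. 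Hence $T$ annihilates every elementary tensor, and by the tensor decomposition above, $T$ vanishes identically on $\D(X\times Y)$.

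There is essentially no serious obstacle in this argument: everything reduces to the tensor product identification, which is combinatorial in nature for $l$-spaces, together with the transparency of the slicing operation under the trivial $G$-action on $Y$. The same idea would break down if $G$ acted non-trivially on $Y$, since then $T_f$ would in general fail to be $(G,\chi)$-equivariant on $X$, which is precisely why the triviality hypothesis on the $Y$-factor is indispensable.
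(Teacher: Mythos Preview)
Your argument is correct and is essentially the same as the paper's: both rely on the tensor decomposition $\D(X\times Y)=\D(X)\otimes\D(Y)$, and your slicing map $T\mapsto(f\mapsto T_f)$ is exactly the Hom-tensor adjunction $\D^{\prime}(X\times Y)\cong\Hom_{\BC}(\D(Y),\D^{\prime}(X))$ that the paper invokes. The paper just phrases this identification categorically and then notes that the $(G,\chi)$-equivariant part corresponds to $\Hom_{\BC}(\D(Y),\D^{\prime}_{\chi}(X))=0$, while you unwind the same isomorphism by hand.
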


\begin{proof}
    According to \cite[Section 1.22]{MR0425030}, we know $\D(X\times Y)=\D(X)\otimes \D(Y)$, so we have a canonical isomorphism between vector spaces
    \begin{align*}
        \D^{\prime}(X\times Y)=\Hom_{\BC}(\D(X\times Y),\BC)=\Hom_{\BC}(\D(X)\otimes\D(Y),\BC)=\Hom_{\BC}(\D(Y),\D^{\prime}(X)).
    \end{align*}
    By our assumption that $G$ acts trivially on $Y$, $\D_{\chi}^{\prime}(X\times Y)$ can be identified with $\Hom_{\BC}(\D(Y),\D^{\prime}_{\chi}(X))$ via the above isomorphism, which is zero since $\D^{\prime}_{\chi}(X)=0$.
\end{proof}

We recall here some classical criteria for the vanishing of equivariant distributions. The following is \cite[Theorem 6.9]{MR0425030}. 
\begin{thm}[Gelfand-Kazhdan Constructibability]\label{thm 2.2}
    Suppose that the action of an $l$-group $G$ on an $l$-sheaf $(X,\Ff)$ is defined. We assume that
    \begin{itemize}
        \item [(a)] the action of $G$ on $X$ is constructive; and 
        \item [(b)] there are no non-zero $G$-invariant $\Ff$-distributions on any $G$-orbit in $X$.
    \end{itemize}
    Then there are no non-zero $G$-invariant $\Ff$-distributions on $X$.
\end{thm}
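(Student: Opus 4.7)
The plan is to argue by contradiction via a support-localization. Suppose $T\in\D'(X)$ is a nonzero $G$-invariant $\Ff$-distribution. I would use the constructivity hypothesis (a) to find a $G$-orbit $\CO$ that is open inside the support of $T$, and then use (b) to force $T$ to vanish there, contradicting the choice of $\CO$.

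First I would pass to $S:=\supp(T)$, which is automatically a closed and $G$-stable subset of $X$ because $T$ is $G$-invariant and the support commutes with the $G$-action. The essential content of constructivity in the sense of \cite[\S 6]{MR0425030} is that every nonempty $G$-stable locally closed subset of $X$ contains some orbit that is relatively open inside it. Applying this to $S$, I extract an orbit $\CO\subset S$ that is open in the subspace topology on $S$, and pick an open set $V\subset X$ with $V\cap S=\CO$; then $\CO$ is closed in $V$ since $S$ is closed in $X$.

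Next I would restrict $T$ to $V$ and invoke the exact sequence \eqref{2.4} with trivial character. The restriction $T|_V$ is $G$-invariant with support inside $V\cap S=\CO$, and because $\CO$ is closed in $V$, that exact sequence identifies $T|_V$ with a $G$-invariant $\Ff$-distribution on the single orbit $\CO$. By hypothesis (b) this forces $T|_V=0$, hence $V\cap\supp(T)=\emptyset$; but our choice of $V$ gives $V\cap\supp(T)=\CO\ne\emptyset$, the desired contradiction.

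The hard part will be extracting the ``open orbit in every nonempty $G$-stable locally closed subset'' property cleanly from the Bernstein--Zelevinsky definition of constructive action. Morally this is a Noetherian-type condition on the orbit space $X/G$ that allows one to pick the smallest orbit on which $T$ fails to vanish, but formalizing it requires unpacking their notion of a constructive set and verifying that the restriction of a constructive action to a $G$-stable subset remains constructive. Once this bookkeeping is done, handling the sheaf coefficients $\Ff$ is routine, since the restriction, extension-by-zero, and support operations for $\Ff$-distributions in \cite{MR0425030} mirror the scalar case closely enough that the argument above goes through verbatim.
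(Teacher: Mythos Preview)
The paper does not give its own proof of this theorem; it is quoted verbatim as \cite[Theorem 6.9]{MR0425030} and used as a black box. Your support-localization argument is exactly the standard proof underlying that reference, so there is no alternative approach to compare against.

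One small repair is needed. When you choose an open $V\subset X$ with $V\cap S=\CO$, this $V$ need not be $G$-stable, so you cannot literally invoke the equivariant sequence \eqref{2.4} on the pair $(\CO,V)$, nor speak of $T|_V$ being ``$G$-invariant''. The fix is painless: replace $V$ by $V':=GV=\bigcup_{g\in G}gV$, which is open and $G$-stable, and check that $V'\cap S=\CO$ (if $x=gv\in V'\cap S$ then $v=g^{-1}x\in S$ by $G$-stability of $S$, so $v\in V\cap S=\CO$ and $x\in\CO$). Now $T|_{V'}$ is genuinely $G$-invariant with support in the closed subset $\CO\subset V'$, and \eqref{2.4} (or rather its sheaf-coefficient analogue from \cite{MR0425030}) applies to yield a $G$-invariant $\Ff$-distribution on the single orbit $\CO$, which vanishes by (b). Your identification of the ``hard part''---extracting an open orbit inside any nonempty $G$-stable locally closed set from the Bernstein--Zelevinsky notion of constructivity---is accurate; this is precisely the content of their Lemma~6.6/6.8.
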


\begin{rem}\label{rem 2.3}
We have some comments on Theorem \ref{thm 2.2} in order. 
    \begin{itemize}
        \item [(1)] According to \cite[Theorem A]{MR0425030}, the induced action of an algebraic group on an algebraic variety defined over a $p$-adic local field at the level of rational points is always constructive.
        \item[(2)] If $\chi$ is a character of $G$, and if we take the $l$-sheaf $\Ff$ to be the sheaf of smooth functions on $X$ with the action of $G$ on $\Ff$ given by
        \begin{align*}
            (g.f)(x):=\chi(g^{-1})f(g^{-1}x), \quad g\in G,x\in X,f\in \mathcal{D}(X).
        \end{align*}
        Then the $G$-invariant $\Ff$-distributions are exactly the $(G,\chi)$-equivariant distributions.
    \end{itemize}
\end{rem}

Another criterion for vanishing of equivariant distributions follows from Proposition 2.29, Theorem 6.9, and Theorem A in \cite{MR0425030}.

\begin{thm}\label{thm 2.1}
    Let an algebraic group $\mathbb{G}$ act on an algebraic variety $\mathbb{X}$, both defined over $F$. Write $G=\mathbb{G}(F)$ and $X=\mathbb{X}(F)$. Let $\chi$ be a character of $G$. Suppose that for any $x\in X$, we have
    \begin{align*}
        \chi_{G_x}^{-1}\neq \Delta_{G}|_{G_x}\cdot \Delta_{G_x}^{-1},
    \end{align*}
where $G_x$ is the stabilizer of $x$, $\chi_{G_x}$ is the restriction of $\chi$ to $G_x$, $\Delta_{G}$ and $\Delta_{G_x}$ are the modular functions of the groups $G$ and $G_x$ respectively. Then $\mathcal{D}^{\prime}_{\chi}(X)=0$.
\end{thm}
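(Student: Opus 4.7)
The plan is to combine the Gelfand-Kazhdan Constructability Theorem (Theorem \ref{thm 2.2}) with an orbit-by-orbit vanishing result for the space of $(G,\chi)$-equivariant distributions. I would begin by taking the $l$-sheaf $\Ff$ on $X$ to be the sheaf of smooth functions equipped with the twisted $G$-action described in Remark \ref{rem 2.3}(2), so that $G$-invariant $\Ff$-distributions are precisely the elements of $\mathcal{D}'_\chi(X)$. To verify hypothesis (a) of Theorem \ref{thm 2.2}, I would invoke Theorem A of \cite{MR0425030}: whenever an algebraic group $\mathbb{G}$ acts algebraically on an algebraic variety $\mathbb{X}$ over a $p$-adic field $F$, the induced action of $G=\mathbb{G}(F)$ on $X=\mathbb{X}(F)$ is constructive, as already recorded in Remark \ref{rem 2.3}(1).

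For hypothesis (b), the task reduces to showing that no individual $G$-orbit $G\cdot x$ carries a non-zero $(G,\chi)$-equivariant distribution. Identifying this orbit with the homogeneous space $G/G_x$, I would apply Proposition 2.29 of \cite{MR0425030}, the $p$-adic form of the Bruhat-Mackey criterion for the existence of a semi-invariant measure on a homogeneous space: the space $\mathcal{D}'_\chi(G/G_x)$ is non-zero if and only if
\[
\chi^{-1}|_{G_x} \;=\; \Delta_G|_{G_x}\cdot \Delta_{G_x}^{-1}.
\]
The standing hypothesis of the theorem asserts exactly that this identity fails at every $x\in X$; hence every $G$-orbit contributes trivially to $\mathcal{D}'_\chi(X)$. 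Theorem \ref{thm 2.2} then yields $\mathcal{D}'_\chi(X)=0$.

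No genuine obstacle is anticipated: the entire argument is a mechanical assembly of three results already cited from \cite{MR0425030} (Proposition 2.29, Theorem 6.9 and Theorem A). The only point requiring care is matching the modular-function conventions and the direction of the twist to those fixed in Section \ref{section 2}, where $(G,\chi)$-equivariance is defined by $gT=\chi^{-1}(g)T$ (with the inverse inserted to be consistent with \cite{MR2763736}). Once this bookkeeping is pinned down, the formula appearing in the theorem statement is literally the obstruction to the existence of a non-zero element of $\mathcal{D}'_\chi(G/G_x)$, and the proof concludes immediately.
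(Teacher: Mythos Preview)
Your proposal is correct and matches the paper's approach exactly: the paper does not spell out a proof but simply states that the theorem ``follows from Proposition 2.29, Theorem 6.9, and Theorem A in \cite{MR0425030},'' which are precisely the three ingredients you assemble (orbit-level vanishing via the Bruhat--Mackey criterion, Gelfand--Kazhdan constructibility, and constructiveness of algebraic actions). Your write-up is a faithful expansion of that one-line citation.
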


\begin{rem}\label{rem .2}
If $G$ is unipotent, so is $G_x$. In this case, the modular characters are both trivial. Hence all of the equivariant distributions will vanish if $\chi_{G_x}$ is nontrivial, as a special case of Theorem 
\ref{thm 2.1}. On the other hand, the assertion for a unipotent group $G$ can be viewed as the $p$-adic version of the unipotent $\chi$-incompatibility in the Archimedean case as in \cite[Lemma 3.4]{MR2763736}. Thus the same calculations in \cite[Lemma 5.4, Lemma 5.6, Lemma 6.2, Lemma 6.4, Lemma 6.6 and Lemma 8.4]{MR2763736} also give the same vanishing results in the $p$-adic case.
\end{rem}

To state the next criterion, we recall the following setting. Let $G$ be an $l$-group, and $\tau:G\rightarrow G$ be an anti-involution of $G$. Let $S$ be a closed subgroup and $\chi_S$ a character of $S$. Write $H=S\times S$ and $\chi=\chi_S\otimes\chi_S$. Let $H$ act on $G$ by
\begin{align*}
    (s_1,s_2)x=s_1x\tau(s_2).
\end{align*}

Let $\mathcal{O}$ be an orbit in $G$ under the action of $H$. Let $g_0\in\mathcal{O}$. The following is an easy, but useful statement, which may be known to experts. 
For completeness, we state here for convenience and give a proof. 
\begin{lem}\label{lem 2.7}
    If there exists some $h_0 \in H$ such that $h_0 g_0=\tau(g_0)$ and $\chi(h_0)=1$, then any $(H,\chi)$-equivariant distribution on $\mathcal{O}$ is also $\tau$-invariant.
\end{lem}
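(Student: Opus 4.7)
The plan is to: (1) establish the intertwining relation between $\tau$ and the $H$-action, showing $\tau(\mathcal{O}) = \mathcal{O}$ and that $\tau^* T$ is again $(H,\chi)$-equivariant; (2) reduce $\tau^* T = T$ to an identity on indicator functions of small $\sigma$-stable, $h_0$-normalized neighborhoods of $g_0$; (3) verify that identity using the two hypotheses.

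For (1), the anti-involution property of $\tau$, together with the $H$-action $(s_1, s_2) \cdot x = s_1 x \tau(s_2)$, gives
\[
\tau(h \cdot x) \;=\; \sigma(h) \cdot \tau(x), \qquad h \in H,\; x \in G,
\]
where $\sigma: H \to H$ is the flip $(s_1, s_2) \mapsto (s_2, s_1)$. Applied at $x = g_0$ together with $\tau(g_0) = h_0 g_0 \in \mathcal{O}$, this yields $\tau(\mathcal{O}) = \mathcal{O}$. Since $\chi = \chi_S \otimes \chi_S$ satisfies $\chi \circ \sigma = \chi$, a short computation shows that $(\tau^* T)(f) := T(f \circ \tau)$ is again $(H,\chi)$-equivariant on $\mathcal{O}$.

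For (2) and (3), let $K \subset H$ be any compact open subgroup that is $\sigma$-stable ($\sigma(K) = K$) and normalized by $h_0$ ($h_0 K h_0^{-1} = K$); arbitrarily small such $K$ exist in the $p$-adic setting. Set $V_K := K \cdot g_0$, a compact open neighborhood of $g_0$ in $\mathcal{O}$. By the intertwining identity,
\[
\tau(V_K) \;=\; \sigma(K) \cdot \tau(g_0) \;=\; \sigma(K) h_0 \cdot g_0 \;=\; h_0 K \cdot g_0 \;=\; h_0 \cdot V_K,
\]
hence $1_{V_K} \circ \tau = 1_{h_0 V_K} = h_0 \cdot 1_{V_K}$. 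Applying $T$ and using $\chi(h_0) = 1$ yields
\[
(\tau^* T)(1_{V_K}) \;=\; T(h_0 \cdot 1_{V_K}) \;=\; \chi(h_0) \, T(1_{V_K}) \;=\; T(1_{V_K}).
\]

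Finally, every $f \in \D(\mathcal{O})$ is locally constant with compact support, so it can be written as a finite $\BC$-linear combination of $H$-translates $h \cdot 1_{V_K}$ for a sufficiently small $K$ in this family. Since both $T$ and $\tau^* T$ are $(H,\chi)$-equivariant and agree on $1_{V_K}$, they agree on every such translate, and therefore $\tau^* T = T$ on all of $\D(\mathcal{O})$. The main technical point is the existence of arbitrarily small compact open subgroups $K \subset H$ simultaneously $\sigma$-stable and $h_0$-normalized; this is obtained by starting from a small compact open subgroup of the form $K' \times K' \subset S \times S = H$ and intersecting with conjugates by $h_0$, and is standard in the $p$-adic setting.
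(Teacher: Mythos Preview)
Your intertwining computation in step (1) is correct, and the idea of testing $\tau^*T$ against $T$ on a well-chosen indicator function is natural. However, the argument has a genuine gap at the point you flag as ``standard'': the existence of arbitrarily small compact open subgroups $K\subset H=S\times S$ that are simultaneously $\sigma$-stable and normalized by $h_0$.

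Write $h_0=(a,b)$. A product subgroup $K'\times K'$ is $\sigma$-stable, and it is $h_0$-normalized exactly when $aK'a^{-1}=K'=bK'b^{-1}$. But an element of an $l$-group need not normalize any compact open subgroup; for instance in $\GL_2(F)$ the element $\mathrm{diag}(\varpi,1)$ normalizes no compact open subgroup (conjugation scales the upper unipotent part, so any open subgroup it normalizes would contain all upper unipotents and fail to be compact). Intersecting $K_0$ with a single conjugate $h_0K_0h_0^{-1}$ gives a compact open subgroup, but not an $h_0$-stable one; to force $h_0$-stability you would need $\bigcap_{n\in\BZ}h_0^nK_0h_0^{-n}$, which is generally not open. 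Since the lemma is stated for an arbitrary closed subgroup $S$ of an $l$-group and arbitrary $h_0$ satisfying the two hypotheses, this situation cannot be excluded; and even in the paper's own applications (Section~7) some of the $h_0$ involve diagonal $2\times 2$ blocks whose entries are ratios of arbitrary elements of $F^\times$, so compactness of $h_0$ is not available.

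There is also a secondary looseness: even granting such a $K$, the claim that every $f\in\D(\mathcal O)$ lies in the $\BC$-span of the translates $h\cdot 1_{V_K}$ for a \emph{fixed} $K$ is not automatic on a homogeneous space $H/H_{g_0}$ with non-trivial stabilizer, since the sets $hKg_0$ need not partition $\mathcal O$. This could likely be repaired, but the normalization issue above cannot.

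The paper avoids both problems by invoking the at-most-one-dimensionality of $\D'_\chi(\mathcal O)$ (Bernstein--Zelevinsky, Proposition~2.29) to reduce to the explicit generator $T_0(f)=\int_{H/H_{g_0}}f(h)\chi(h)\,dh$, and then computing $\tau T_0=cT_0$ via a change of variables whose Jacobian $c$ is positive; this forces the sign in $\tau T'=-T'$ to be impossible. If you want to rescue your approach, the cleanest route is to use that one-dimensionality up front: once you know $\tau^*T=\lambda T$ with $\lambda^2=1$, you only need a single test function on which the two sides agree and $T$ does not vanish, and for that a $\sigma$-stable $K$ (no $h_0$-normalization) together with a short averaging argument suffices.
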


\begin{proof}
    Let $T$ be an $(H,\chi)$-equivariant distribution on $\CO$. If $T$ is not $\tau$-invariant then $T^{\prime}:=T-\tau(T)$ is non-zero and is $(H,\chi)$-equivariant as well. Note that $\tau(T^{\prime})=-T^{\prime}$. According to \cite[Proposition 2.29]{MR0425030}, $T^{\prime}$ is proportional to the distribution $T_0$, which is defined as
    \begin{align*}
        T_0(f):=\int_{H/H_{g_0}}f(h)\chi(h) dh,
    \end{align*}
where we first identify $H/H_0$ with $\CO$ via $hH_{g_0}\mapsto hg_0$ and then view $f\in\D(\CO)=\D(H/H_{g_0})$ as a locally constant function on $H$ with compact support module $H_{g_0}$. Then $h\mapsto f(h)\chi(h)$ is a function on $H$ satisfying the condition of \cite[Theorem 1.21]{MR0425030} because of Theorem \ref{thm 2.1} and the measure $dh$ is the $H$-invariant functional in \cite[Theorem 1.21]{MR0425030}.

The involution $\tau$ on $\CO$ induces the well-defined map 
\[
H/H_{g_0}\rightarrow H/H_{g_0}\quad {\rm with}\quad hH_{h_0}\mapsto \tau(h)h_0H_{g_0}.
\]
 We deduce that 
\begin{align*}
    \tau T_0(f)&=\int_{H/H_{g_0}}f(\tau(h)h_0)\chi(\tau(h)h_0)dh
    =c\int_{H/H_{g_0}}f(h_0k)\chi(h_0k)dk=c\int_{H/H_{g_0}}f(k)\chi (k)dk
    =cT_0(f)
\end{align*}
where we make a change of variable $H/H_{g_0}\rightarrow H/H_{g_0}$ by sending $h$ to $k=h_0^{-1}\tau(h)h_0$ and $c$ is its modulus function, which is positive. On the other hand, the involution 
$\tau$ has the property that $\tau T_0(f)=-T_0(f)$. This leads to a contradiction.
\end{proof}

Last but not least, we recall from \cite{MR748505} the Bernstein localization principle.

Let $q: X\rightarrow Y$ be a continuous map between $l$-spaces. Then $\D(X)$ and hence $\D^{\prime}(X)$ become $\D(Y)$-modules. For any $y\in Y$ consider the fiber $X_y:=q^{-1}(\{ y\})$ and identify the space $\D^{\prime}(X_y)$ with the subspace of $\D^{\prime}(X)$ of distributions concentrated on this fiber by using the exact sequence in \eqref{2.2}.

\begin{thm}[Bernstein Localization Principle]
Let $W$ be a closed subspace of $\D^{\prime}(X)$ with a $\D(Y)$-submodule structure. Then $W$ is generated by distributions concentrated on fibers. Equivalently, the sum of subspaces $W^y:=W\cap \D^{\prime}(X_y)$ for all $y\in Y$ is dense in $W$.
\end{thm}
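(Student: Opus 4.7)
The plan is to argue by contradiction via Hahn--Banach, then exploit the $\D(Y)$-module structure to transport the problem from $X$ to $Y$ where pointwise localization is more tractable. Suppose toward contradiction that $W_{0} := \overline{\sum_{y \in Y} W^{y}}$ is a proper closed subspace of $W$. Since $\D^{\prime}(X)$ is equipped with the weak-$\ast$ topology whose continuous dual is $\D(X)$, Hahn--Banach produces $T \in W$ and $f \in \D(X)$ with $T(f) \neq 0$ while $S(f) = 0$ for every $S \in W_{0}$.

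The bridge to $Y$ is the compactly supported distribution $\mu \in \D^{\prime}(Y)$ defined by
\[
\mu(\phi) \;:=\; T\!\bigl((\phi \circ q) \cdot f\bigr) \;=\; (\phi \cdot T)(f), \qquad \phi \in \D(Y),
\]
which has support contained in the compact set $K := q(\supp(f))$. Choosing $\phi \in \D(Y)$ equal to $1$ on $K$ yields $T(f) = \mu(\phi)$, so it suffices to show $\mu = 0$. Since $Y$ is an $l$-space, $\D(Y)$ is spanned by characteristic functions $1_{V}$ of compact opens $V \subseteq Y$, which reduces the task to proving $\mu(1_{V}) = (1_{V} \cdot T)(f) = 0$ for every compact open $V$. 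As $W$ is a $\D(Y)$-submodule, $1_{V} \cdot T \in W$, and by the defining property of $f$ on $W_0$ it then suffices to verify $1_{V} \cdot T \in W_{0}$ for every compact open $V \subseteq Y$.

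The heart of the argument becomes a weak-$\ast$ approximation statement: any $R \in W$ whose support lies over a compact open $V$ is a weak-$\ast$ limit of finite sums of elements drawn from $\bigcup_{y \in V} W^{y}$. To produce such approximations, I would refine $V$ into finer and finer disjoint decompositions $V = \bigsqcup_{i} V_{i}$ of compact opens, splitting $R = \sum_{i} 1_{V_{i}} \cdot R$ accordingly; along a compact-open neighborhood basis $\{V_{\alpha}^{(y)}\}$ of each point $y \in V$, the net $\{1_{V_{\alpha}^{(y)}} \cdot R\}$ should admit a weak-$\ast$ convergent subnet (via Banach--Alaoglu applied to the bounded subset of $\D^{\prime}(X)$ whose elements are supported in a fixed compact set, combined with the closedness of $W$), with limit supported in $\bigcap_{\alpha} q^{-1}(V_{\alpha}^{(y)}) = X_{y}$ and hence lying in $W^{y}$.

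The main obstacle is assembling these pointwise localizations into a genuine weak-$\ast$ approximation of $R$ by finite sums. In the trivial case $X = Y$, $q = \mathrm{id}$, this is immediate: one replaces $1_{V_{i}} \cdot R$ by the point mass $R(1_{V_{i}}) \delta_{y_{i}} \in W^{y_{i}}$, and the approximation becomes exact on any fixed test function after one sufficiently fine refinement. When the fibers of $q$ are non-trivial, no such canonical point-mass substitute exists, and the construction must instead exploit the idempotented, non-degenerate module structure of $\D(Y)$ acting on $W$. Carrying this through cleanly in the non-metrizable weak-$\ast$ topology — working with nets rather than sequences throughout — constitutes the substantive technical content of the result.
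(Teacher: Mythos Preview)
The paper does not prove this theorem; it is quoted from \cite{MR748505} without argument, so there is no in-paper proof to compare against. Judging your proposal on its own: the Hahn--Banach setup and the auxiliary distribution $\mu$ on $Y$ are correct and standard. The gap is exactly where you yourself hedge. Your proposed mechanism---Banach--Alaoglu applied to the net $\{1_{V_\alpha^{(y)}}\cdot R\}$---is not justified: distributions supported in a fixed compact set do \emph{not} form a bounded subset of $\D'(X)$ (consider $n\,\delta_{x_0}$), so nothing forces convergent subnets to exist, let alone subnets whose limits reassemble to $R$. You acknowledge this obstacle but do not overcome it; the final paragraph is a description of the difficulty, not a resolution. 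The preceding ``reduction'' to $1_V\cdot T\in W_0$ for all compact open $V$ is also not a genuine simplification---applied to a large $V$ it is the original statement.

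Bernstein's proof sidesteps weak-$*$ limits entirely by dualizing to the test-function side. Set $I:=W^{\perp}\subset\D(X)$; since $W$ is a $\D(Y)$-submodule, so is $I$. Because $\D'(X)$ is the full algebraic dual of $\D(X)$, one has $A^{\perp\perp}=A$ for every subspace $A\subset\D(X)$, and hence $(W^y)^{\perp}=(I+\D(X\setminus X_y))^{\perp\perp}=I+\D(X\setminus X_y)$. Thus if $f$ annihilates every $W^y$, then for each $y$ we may write $f=f_y+g_y$ with $f_y\in I$ and $g_y\in\D(X\setminus X_y)$; since $\supp g_y$ is compact and misses $X_y$, its image $q(\supp g_y)$ misses $y$, so $g_y$ vanishes on $q^{-1}(V_y)$ for some compact open $V_y\ni y$, i.e.\ $f=f_y$ on $q^{-1}(V_y)$. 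Finitely many $V_{y_i}$ cover $q(\supp f)$; passing to a disjoint refinement $\{V_i'\}$ and using that $I$ is $\D(Y)$-stable, the finite sum $\sum_i(1_{V_i'}\circ q)\,f_{y_i}$ lies in $I$ and equals $f$. The whole point is that on $\D(X)$ the patching is finite and exact, whereas on $\D'(X)$ one is forced into limits that cannot be controlled.
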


The following direct corollaries are very useful.
\begin{cor}\label{cor 2.9}
Let $G$ act on the space $X$ preserving each fiber $X_y$, and $\chi$ be a quasi-character of $G$, then the space $\D^{\prime}_{\chi}(X)$ is topologically generated by $\D^{\prime}_{\chi}(X)^y$, which can be canonically identified with $\{T\in \D^{\prime}(X_y) \; | \;gT=\chi(g)T \}$.
\end{cor}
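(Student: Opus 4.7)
The plan is to deduce Corollary \ref{cor 2.9} as a direct application of the Bernstein Localization Principle to the subspace $W := \D^{\prime}_{\chi}(X) \subset \D^{\prime}(X)$. To invoke the principle, I need to verify two things: (i) that $W$ is a closed subspace with respect to the weak-$*$ topology, and (ii) that $W$ carries a $\D(Y)$-submodule structure inherited from the ambient $\D^{\prime}(X)$.

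For (i), the equivariance condition $gT = \chi^{-1}(g) T$ is a closed condition for each fixed $g \in G$ (the maps $T \mapsto gT$ and $T \mapsto \chi^{-1}(g)T$ are continuous in the weak-$*$ topology), and $W$ is the intersection of these closed conditions over all $g \in G$, hence closed. For (ii), the $\D(Y)$-module structure on $\D^{\prime}(X)$ is given by $\varphi \cdot T := (q^{*}\varphi) \cdot T$ for $\varphi \in \D(Y)$, where $q^{*}\varphi = \varphi \circ q$. Since $G$ preserves each fiber $X_y = q^{-1}(\{y\})$, the pulled-back function $q^{*}\varphi$ is $G$-invariant on $X$. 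Therefore, for any $T \in W$ and $g \in G$,
\[
g \cdot (q^{*}\varphi \cdot T) = (q^{*}\varphi) \cdot (g T) = \chi^{-1}(g)\, (q^{*}\varphi) \cdot T,
\]
so $q^{*}\varphi \cdot T \in W$, showing that $W$ is a $\D(Y)$-submodule as required.

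Granting these two points, the Bernstein Localization Principle immediately yields that $W$ is topologically generated by the subspaces $W^{y} = W \cap \D^{\prime}(X_y)$ as $y$ ranges over $Y$. The final task is to identify $W^{y}$ with the space of $(G,\chi)$-equivariant distributions on the fiber $X_y$. Here I would use the exact sequence \eqref{2.2} applied to the locally closed embedding $X_y \hookrightarrow X$: the space $\D^{\prime}(X_y)$ is canonically the subspace of $\D^{\prime}(X)$ consisting of distributions supported on $X_y$. Because $G$ preserves $X_y$ setwise, the $G$-action on $\D^{\prime}(X)$ restricts to this subspace, and under the identification it agrees with the natural $G$-action on $\D^{\prime}(X_y)$. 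Intersecting with the equivariance condition then gives the stated identification.

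No genuine obstacle is anticipated; the statement is a formal consequence of the Bernstein Localization Principle. The only point deserving care is the verification that $q^{*}\varphi$ is $G$-invariant, which is exactly where the fiber-preserving hypothesis is used — without it, multiplication by pullbacks from $Y$ would not preserve $W$, and the localization principle could not be invoked.
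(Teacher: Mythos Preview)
Your proposal is correct and matches the paper's (implicit) approach: the paper states Corollary \ref{cor 2.9} as a direct corollary of the Bernstein Localization Principle without supplying a proof, and your argument fills in exactly the verifications needed --- closedness of $W$ in the weak-$*$ topology, the $\D(Y)$-submodule structure via $G$-invariance of $q^{*}\varphi$, and the identification of $W^y$ through the exact sequence \eqref{2.2}.
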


\begin{cor}\label{cor 2.10}
Assumptions are as in Corollary \ref{cor 2.9}. If $\D^{\prime}_{\chi}(X)=0$, then $\D^{\prime}_{\chi}(Z)=0$ 
for any $G$-invariant locally closed subspace $Z$, which is a union of fibers, of $X$.
\end{cor}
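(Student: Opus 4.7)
The plan is to derive Corollary \ref{cor 2.10} as a direct application of Corollary \ref{cor 2.9} to the subspace $Z$ equipped with the restricted projection $q|_Z \colon Z \to Y$. First I would verify that the hypotheses of Corollary \ref{cor 2.9} transfer to $Z$: as a locally closed subspace of an $l$-space, $Z$ inherits the Hausdorff, locally compact, and zero-dimensional properties, and is therefore itself an $l$-space; the map $q|_Z$ is continuous; and the $G$-action on $Z$, inherited from the ambient action on $X$, preserves each fiber of $q|_Z$ because every such fiber sits inside a fiber of $q$ which is $G$-stable by assumption.

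The key observation is the following fiber-wise comparison. Since $Z$ is a union of fibers of $q$, for each $y\in Y$ the fiber $(q|_Z)^{-1}(\{y\})$ is either empty (when $y\notin q(Z)$) or equal to the full fiber $X_y$ (when $y\in q(Z)$). Consequently, for every $y\in q(Z)$ one has the identification
\[
\D^{\prime}_{\chi}(Z)^y \;=\; \{T\in \D^{\prime}(X_y)\,:\, gT=\chi(g)T\} \;=\; \D^{\prime}_{\chi}(X)^y,
\]
using Corollary \ref{cor 2.9} for $X$ in the last equality. Since $\D^{\prime}_{\chi}(X)^y$ is by definition a subspace of $\D^{\prime}_{\chi}(X)=0$, each $\D^{\prime}_{\chi}(Z)^y$ vanishes.

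Finally, applying Corollary \ref{cor 2.9} to the pair $(Z,q|_Z)$, the space $\D^{\prime}_{\chi}(Z)$ is topologically generated by its fiber-wise pieces $\D^{\prime}_{\chi}(Z)^y$, all of which are zero. Taking the closure of the zero subspace gives $\D^{\prime}_{\chi}(Z)=0$, which is the desired conclusion.

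There is no serious obstacle here, as the statement is essentially an application of Corollary \ref{cor 2.9} in a rather literal sense. The only mildly delicate technical point is ensuring that $Z$, as a subspace of $X$, is an $l$-space so that the Bernstein Localization Principle applies to it; this is exactly where the local closedness of $Z$ (as opposed to being an arbitrary subset) is used, and it is a routine topological verification.
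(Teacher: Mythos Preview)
Your argument is correct and is precisely the intended one: the paper states Corollary \ref{cor 2.10} without proof as a ``direct corollary'' of the Bernstein Localization Principle via Corollary \ref{cor 2.9}, and your fiber-wise comparison together with the application of Corollary \ref{cor 2.9} to $(Z,q|_Z)$ is exactly how one unpacks this.
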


\subsection{Wavefront Sets}
Before we state results about wavefront sets used in our proof, we first recall some standard material about $p$-adic analytic manifolds.
Let $X$ be a $p$-adic analytic manifold (see \cite{MR0176987}). Define a Schwartz measure as a locally constant section with compact support of the density bundle. 
Denote by $C^{-\infty}(X)$ the space of generalized functions, which are defined as elements in the dual space of Schwartz measures on $X$.
As \cite[Proposition 16.37]{MR2954043}, there exists a smooth density $\omega_X$ with full support on $X$. By using such a density, one may identify Schwartz functions with Schwartz measures in the way that 
takes $f$ to $f\omega_X$. Hence we can identify distributions with generalized functions.

Let $\varphi: X\rightarrow Y$ be a surjective submersion. The pushing forward associated with $\varphi$ of Schwartz measures induces the pulling back of generalized functions:
\begin{align*}
    \varphi^*:C^{-\infty}(Y)\rightarrow C^{-\infty}(X),
\end{align*}
which is an injection.
Recall from \cite[Section 2.2]{MR2763736}, we have the notions such as local slice, slice, and relative stability for the $p$-adic setting. In particular, Lemmas 2.5, 2.6, and 2.7 there are still available here. For convenience, we discuss them briefly here.

Let $M$ be a $p$-adic analytic manifold with an action of a $p$-adic analytic group $H$, let $\mathfrak{M}$ be a submanifold of $M$ and denote 
\begin{align*}
    \rho_{\mathfrak{M}}:H\times \mathfrak{M}\rightarrow M
\end{align*}
as the action map.

\begin{defn}\label{defn-1}
Let $\mathfrak{M}$ be a submanifold of $M$.
    \begin{itemize}
        \item [(1)] We say $\FM$ is a local slice of $M$ if $\rho_{\FM}$ is a submersion and an $H$ slice of $\rho_{\FM}$ is a surjective submersion.
        \item [(2)] Given two submanifolds $\FZ\subset\FM$ of $M$, we say that $\FZ$ is relatively $H$ stable in $\FM$ if 
        \begin{align*}
            \FM\cap H\FZ=\FZ.
        \end{align*}
    \end{itemize}
\end{defn}

\cite[Lemma 2.6]{MR2763736} also holds in the $p$-adic setting:
\begin{lem}
    Let $\FM$ be an $H$ slice of $M$, and let $\FZ$ be a relatively $H$ stable submanifold of $\FM$. Then $Z=H\FZ$ is a submanifold of $M$, and $\FZ$ is an $H$ slice of $Z$. Furthermore, if $\FZ$ is closed in $\FM$, then $Z$ is closed in $M$.
\end{lem}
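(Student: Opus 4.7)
The plan rests on the observation that the relative $H$-stability hypothesis $\FM\cap H\FZ = \FZ$ is equivalent to a saturation identity for the action map: writing $Z = H\FZ$, one has $\rho_\FM^{-1}(Z) = H\times\FZ$ as subsets of $H\times\FM$. Once this is in hand, the submanifold structure on $Z$, the $H$-slice property of $\FZ$, and the closedness statement all follow formally from the fact that $\rho_\FM: H\times\FM\to M$ is a surjective submersion of $p$-adic analytic manifolds.

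I would first verify this saturation identity. If $hz\in H\FZ$ with $z\in\FM$, then $z\in H\FZ$ by $H$-invariance of $H\FZ$, and relative stability forces $z\in\FM\cap H\FZ = \FZ$; the reverse inclusion is trivial. Now, because $\rho_\FM$ is a surjective submersion, the $p$-adic implicit function theorem (\cite{MR0176987}) supplies, around each point $m\in Z$, local analytic coordinates in which $\rho_\FM$ becomes a coordinate projection $V\times W\to V$ onto a neighborhood $V\subset M$ of $m$. In these coordinates, the saturated submanifold $H\times\FZ = \rho_\FM^{-1}(Z)$ takes the product form $(Z\cap V)\times W$; since it is a submanifold of $V\times W$ and contains all $W$-translates of its points, intersecting with any transversal slice $V\times\{w_0\}$ shows $Z\cap V$ is a submanifold of $V$. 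Thus $Z$ is a submanifold of $M$, and the restriction $\rho_\FZ = \rho_\FM|_{H\times\FZ}$ is automatically a surjective submersion onto $Z$, so $\FZ$ is an $H$-slice of $Z$.

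For the closedness claim, the key set-theoretic identity is $M\setminus Z = H(\FM\setminus\FZ)$: one inclusion follows from relative stability (if $z\in\FM\setminus\FZ$ but $hz\in Z = H\FZ$, then $z\in\FM\cap H\FZ = \FZ$, a contradiction) and the other from the surjectivity of $\rho_\FM$. Assuming $\FZ$ is closed in $\FM$, the open set $H\times(\FM\setminus\FZ)$ has open image under the submersion $\rho_\FM$, and this image is precisely $M\setminus Z$; hence $Z$ is closed in $M$. The technical heart of the argument is the saturation identity, which is the precise translation of relative $H$-stability into a statement about fibers of $\rho_\FM$; beyond this, the proof is an exercise in the $p$-adic implicit function theorem and the openness of submersions, so no essentially new obstacle arises compared to the Archimedean treatment in \cite[Lemma 2.6]{MR2763736}.
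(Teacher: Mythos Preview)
Your argument is correct. The paper does not actually supply a proof of this lemma: it merely states that \cite[Lemma 2.6]{MR2763736} ``also holds in the $p$-adic setting'' and records the statement. Your proposal is precisely the kind of verification the paper is tacitly invoking --- the saturation identity $\rho_{\FM}^{-1}(Z)=H\times\FZ$ encoding relative $H$-stability, followed by the standard facts that (i) a subset is a submanifold iff its preimage under a submersion is, (ii) restricting a submersion to a saturated submanifold yields a submersion onto the image, and (iii) submersions are open --- is exactly how the Archimedean argument in \cite{MR2763736} runs, with the $p$-adic implicit function theorem from \cite{MR0176987} replacing its real counterpart. So you have filled in what the paper leaves implicit, by the same route.
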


We assume that $\mathfrak{M}$ is a local $H$-slice of $M$, that is, $\rho_{\mathfrak{M}}$ is a submersion. Let $H_{\mathfrak{M}}$ be a closed subgroup of $H$ that leaves $\mathfrak{M}$ stable. Let $H$ act $H\times \mathfrak{M}$ by left multiplication on the first factor, and $H_{\mathfrak{M}}$ act on $H\times \mathfrak{\M}$ by
\begin{align*}
    g(h,x)=(ghg^{-1},gx),\quad g\in H_{\mathfrak{M}}, h\in H,x\in\mathfrak{M}.
\end{align*}
Then the submersion $\rho_{\mathfrak{M}}$ is $H$-equivariant as well as $H_{\mathfrak{M}}$-equivariant. Therefore the pulling back yields a linear map (we need to fix smooth densities on $H\times\mathfrak{M}$ as well as on $M$)
\begin{align*}
    \rho_{\mathfrak{M}}^*:\mathcal{D}^{'}_{\chi}(M)\rightarrow \mathcal{D}^{'}_{\chi}(H\times\mathfrak{M})\cap\mathcal{D}^{'}_{\chi_{\mathfrak{M}}}(H\times\mathfrak{M}),
\end{align*}
where $\chi_{\mathfrak{M}}:=\chi|_{H_{\mathfrak{M}}}$. By the Schwartz Kernel Theorem and the uniqueness of the Haar measure, we have
\begin{align*}
    \mathcal{D}^{'}_{\chi}(H\times\mathfrak{M})=\chi\otimes\mathcal{D}^{'}(\mathfrak{M}).
\end{align*}
Consequently, we obtain that 
\begin{align*}
    \mathcal{D}^{'}_{\chi}(H\times\mathfrak{M})\cap\mathcal{D}^{'}_{\chi_{\mathfrak{M}}}(H\times\mathfrak{M})=\chi\otimes\mathcal{D}^{'}_{\chi_{\mathfrak{M}}}(\mathfrak{M}).
\end{align*}
We record this as a lemma, which the $p$-adic version of \cite[Lemma 2.7]{MR2763736}.
\begin{lem}\label{lem 2.2}
    There is a well-defined map, which is called the restriction to $\mathfrak{M}$,
    \begin{align*}
        \mathcal{D}^{'}_{\chi}(M)\rightarrow \mathcal{D}^{'}_{\chi_{\mathfrak{M}}}(\mathfrak{M}), \quad f\mapsto f|_{\mathfrak{M}}
    \end{align*}
    by requiring that 
    \begin{align*}
        \rho_{\mathfrak{M}}^*(f)=\chi\otimes f|_{\mathfrak{M}}.
    \end{align*}
The map is injective when $\mathfrak{M}$ is an $H$-slice, i.e., $\rho_{\mathfrak{M}}$ is a surjective submersion.
\end{lem}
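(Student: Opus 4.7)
The plan is to read off the restriction map directly from the chain of identifications established in the paragraphs immediately preceding the statement. The pullback $\rho_{\FM}^*$ has already been shown to send $\D^{'}_{\chi}(M)$ into $\D^{'}_{\chi}(H\times\FM) \cap \D^{'}_{\chi_{\FM}}(H\times\FM)$, and via the Schwartz Kernel Theorem together with the uniqueness of the Haar measure this intersection is canonically identified with $\chi \otimes \D^{'}_{\chi_{\FM}}(\FM)$. I will therefore simply \emph{define} $f|_{\FM} \in \D^{'}_{\chi_{\FM}}(\FM)$ to be the unique distribution on $\FM$ for which $\rho_{\FM}^*(f) = \chi \otimes f|_{\FM}$. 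Both the existence and uniqueness of this second tensor factor, and the $(H_{\FM},\chi_{\FM})$-equivariance of $f|_{\FM}$, are built into the identification just recalled, so the map is well-defined without further argument.

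For the injectivity claim I will invoke the general fact recalled at the beginning of Section \ref{section 2}: the pullback of generalized functions along a surjective submersion between $p$-adic analytic manifolds is injective. When $\FM$ is a genuine $H$-slice, $\rho_{\FM}$ is by hypothesis a surjective submersion, so $\rho_{\FM}^*$ is injective. Hence if $f|_{\FM}=0$, then $\rho_{\FM}^*(f)=\chi\otimes 0=0$, which forces $f=0$.

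The only step I expect to verify carefully is the Schwartz Kernel factorization $\D^{'}_{\chi}(H\times\FM)=\chi\otimes\D^{'}(\FM)$ itself, which is the conceptual content of the lemma. Starting from the algebraic identity $\D(H\times\FM)=\D(H)\otimes\D(\FM)$ used already in the proof of Lemma \ref{lem-prod}, one identifies $\D^{'}(H\times\FM)$ with $\Hom_{\BC}(\D(\FM),\D^{'}(H))$, and then uses the one-dimensionality of the space of left $(H,\chi)$-equivariant distributions on $H$ (spanned by $\chi\cdot dh$ for a left Haar measure $dh$) to cut the equivariant subspace down to $\chi\otimes\D^{'}(\FM)$. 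Intersecting with the $H_{\FM}$-equivariance condition then pins down the $\FM$-factor to lie in $\D^{'}_{\chi_{\FM}}(\FM)$, after which the remaining assertions are purely formal.
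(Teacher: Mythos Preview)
Your proposal is correct and matches the paper's approach exactly: the paper treats this lemma as simply ``recording'' the discussion immediately preceding it (the pullback $\rho_{\FM}^*$ landing in the intersection, the Schwartz Kernel identification $\D^{'}_{\chi}(H\times\FM)=\chi\otimes\D^{'}(\FM)$ via uniqueness of Haar measure, and the consequent factorization), with injectivity coming from the injectivity of pullback along a surjective submersion. Your write-up unpacks these steps in slightly more detail than the paper does, but the argument is the same.
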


Now we discuss briefly the wavefront sets of $\Fz$-finite distributions and state the results that are useful in our proof. As before, if $\BG$ is a reductive algebraic group defined over $F$, we write $G=\BG(F)$ for the $F$-rational points. Let $\mathfrak{z}=\Fz(G)$ be the Bernstein center of $G$. 
It is well-known that the Bernstein center $\mathfrak{z}$ can be realized as the space of essentially compact $G$-invariant distributions under the adjoint action, which is denoted by $\mathfrak{z}=\D^{\prime}(G)^{G}_{e.c}$ and is an algebra with the convolution as its multiplication. It is clear that the Bernstein center of $\BG(F)$ for $p$-adic local fields $F$ plays an analogous role as the center of the universal enveloping algebra of the Lie algebra of $\BG(F)$ for Archimedean local fields $F$. 

In the Archimedean case, the center $\mathfrak{z}$ of the universal enveloping algebra of the Lie algebra of $G$ plays an important role in the study of the Multiplicity One Problem (see \cite{MR348047} for the Whittaker models and \cite{MR2763736} for the Ginzburg-Rallis models). In the $p$-adic case, distributions arising in representation theory 
are often $\mathfrak{z}$-finite. As indicated in the recent work (\cite{MR3038552} and \cite{MR3406530}), it is natural to consider the $\Fz$-finite distributions in the study 
of the Multiplicity One Problem and other relevant topics. 
In this paper, we shall deal with $\mathfrak{z}$-finite distributions arising in the Ginzburg-Rallis model over a $p$-adic local field $F$ of characteristic zero.

We start with  a brief review of the theory of wavefront sets over a $p$-adic local field $F$ following the developments in \cite{MR2631841} and \cite{MR3417683}.
We first define the wavefront sets for vector spaces.

\begin{defn}
Let $V$ be a finite-dimensional vector space over $F$ and $V^*$ be the linear dual. 
\begin{itemize}
    \item [(1)]  Let $f\in C^{\infty}(V^*)$ and $w_0\in V^{*}$. We say that $f$ vanishes asymptotically in the direction of $w_0$ if there is some $\rho\in\mathcal{D}(V^{*})$ such that $\rho(w_0)\neq 0$ and the function on $V^{*}\times F$ defined by 
    \[
    (w,\lambda)\mapsto f(\lambda w)\rho(w)
    \]
    is compactly supported.    
    \item [(2)] Let $U\subset V$ be an open set and $T\in\mathcal{D}^{'}(U)$. Let $x_0\in U$ and $w_0\in V^{*}$. We say that $T$ is smooth at $(x_0,w_0)$ if there exists a compactly supported non-negative function $\rho\in\mathcal{D}(V)$ with $\rho(x_0)\neq0$ such that the Fourier transform $\CF(\rho T)$ vanishes asymptotically in the direction $w_0$.
    \item[(3)] The complement in the cotangent bundle $T^{*}U$ of the set of smooth pairs $(x_0,w_0)$ of $T$ is the wavefront set of $S$ and is denoted by $\wf(T)$.
    \item[(4)] For a point $x\in U$ we denote $\wf_x(T):=\wf(T)\cap T^{*}_x(U)$.
\end{itemize}
\end{defn}

Note that the notion of the Fourier transform $\CF$ on a vector space depends on a choice of a non-trivial character of $F$, however, this dependence affects the Fourier transform only by dilation, and thus does not change the wave front set.

We recall Corollary 2.4 in \cite{MR3417683} and state it as a lemma.
\begin{lem}\label{wf-diff}
    Let $U\subset F^m$ and $V\subset F^n$ be open subsets and $\phi:U\rightarrow V$ be an analytic diffeomorphism. Then for any $T\in\mathcal{D}^{'}(U)$, the wavefront sets enjoy 
    the property:
    \begin{align*}
        \wf(\phi^*(T))=\phi^*(\wf(T)).
    \end{align*}
\end{lem}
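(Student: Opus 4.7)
The plan is to exploit the locality of the wavefront set and reduce, via a factorization $\phi = L \circ \psi$, to two more tractable sub-cases: the affine case, handled by a direct Fourier computation, and the case where the differential at the basepoint is the identity, handled via the $p$-adic analytic structure of $\phi$. By the symmetry between $\phi$ and $\phi^{-1}$, it suffices to prove one inclusion.

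The first step is localization and affine reduction. Since smoothness at $(x_0,w_0)$ is tested by cut-offs $\rho$ supported in an arbitrarily small neighborhood of $x_0$, the equality of wavefront sets may be checked locally at each $x_0$. Fix such an $x_0$, set $y_0 = \phi(x_0)$ and $A = (d\phi)_{x_0}$; since $\phi$ is a diffeomorphism one has $m = n$ and $A \in \GL_n(F)$. The affine map $L(x) := A(x-x_0)+y_0$ satisfies $dL = A$, and $\psi := L^{-1}\circ\phi$ is an analytic diffeomorphism with $\psi(x_0) = x_0$ and $(d\psi)_{x_0} = I$. Functoriality of pullback then reduces the lemma to the two cases $\phi = L$ and $\phi = \psi$ separately. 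The affine case follows from the standard Fourier transformation $\widehat{A^*(f\omega)}(w) = |\det A|^{-1}\,\widehat{f\omega}((A^T)^{-1}w)$ combined with the pointwise nature of asymptotic vanishing in the $p$-adic setting (namely, that $f(\lambda w) = 0$ for $|\lambda|$ large and $w$ near $w_0$); the translation part of $L$ contributes only a multiplicative character in $w$, which does not affect asymptotic vanishing in a fixed direction, giving exactly the transformation $L^*$ on the cotangent bundle.

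The remaining task, to prove the identity for $\psi$ with $\psi(x_0) = x_0$ and $(d\psi)_{x_0} = I$, is where the main work lies. On a sufficiently small compact-open ball $B$ around $x_0$, the analytic expansion $\psi(x) = x + O(\|x-x_0\|^2)$ and the ultrametric inequality yield $\|\psi(x) - x\| < \|x - x_0\|$, so that $\psi$ preserves the canonical filtration of $B$ by sub-balls. I would then partition $B$ into compact-open balls $B_i$ fine enough that, for all $w$ in a fixed compact neighborhood of $w_0$ and all $x \in B_i$, the phase $\psi_F(\langle w, \psi(x)-x\rangle)$ is identically $1$, which is possible by the local constancy of $\psi_F$. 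Then $\widehat{\rho\,\psi^*T}(w)$ and $\widehat{\rho\,T}(w)$ agree on each block up to a locally constant factor in $w$, transferring the condition of asymptotic vanishing in the direction $w_0$ between $T$ and $\psi^*T$. The main obstacle is to secure uniformity in $w$ so that a single partition serves simultaneously for all $w$ near $w_0$; this is exactly the point where the $p$-adic setting proves more forgiving than the Archimedean one, since the ultrametric structure together with the local constancy of $\psi_F$ collapses what would be a delicate non-stationary-phase analysis into a finite combinatorial bookkeeping over a lattice of scales, in the spirit of the arguments developed in \cite{MR3417683}.
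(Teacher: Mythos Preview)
The paper does not give its own proof of this lemma; it is simply recorded as Corollary~2.4 of \cite{MR3417683} and quoted without argument. So there is no in-paper proof to compare against, and your attempt already goes further than the text.

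Your localization, the factorization $\phi=L\circ\psi$, and the affine step are all fine. The gap is in the $\psi$-step. You propose to partition a small ball $B$ into pieces $B_i$ on which the phase $\psi_F(\langle w,\psi(x)-x\rangle)$ is identically $1$ for $w$ in a fixed compact neighborhood of $w_0$, and then read off agreement of Fourier transforms ``up to a locally constant factor in $w$''. But asymptotic vanishing in the direction $w_0$ is a statement about $\widehat{\rho T}(\lambda w)$ for $|\lambda|$ \emph{large}, not for $w$ in a bounded set. For any fixed partition $\{B_i\}$, once $|\lambda|$ exceeds a threshold depending on the diameters of the $B_i$, the phases $\psi_F(\langle \lambda w,\psi(x)-x\rangle)$ become nontrivial and vary across each $B_i$, and your block-by-block comparison collapses in exactly the regime that defines the wavefront set. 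The obstacle you name---uniformity in $w$ near $w_0$---is not the real one; the real one is the high-frequency limit $|\lambda|\to\infty$.

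A correct argument must couple the spatial scale to the frequency: the quadratic bound $\|\psi(x)-x\|\le C\|x-x_0\|^2$ means that on a ball of radius $r$ the phase is trivial only for $|\lambda|\lesssim r^{-2}$, so a single fixed partition cannot work. One has to either refine the partition as $|\lambda|$ grows and control the resulting sum, or reformulate the comparison so that the nonlinear remainder is absorbed differently (for instance via an explicit $p$-adic stationary-phase or change-of-variable estimate at each dyadic frequency scale). This is what the argument in \cite{MR3417683} actually carries out; your sketch gestures toward it but does not supply the mechanism.
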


It is clear that Lemma \ref{wf-diff} leads to the definition of the wavefront set of any distribution on any analytic manifold over $F$, as a subset of the cotangent bundle.

\begin{defn}
    Let $X$ be an $F$-analytic manifold an $T\in\mathcal{D}^{'}(X)$. The wavefront set $\wf(T)$ is defined to be the set of all $(x,\lambda)\in T^*X$ which lie in the wavefront set of $T$ in some local coordinates. In other words, $(x,\lambda)\in \wf(T)$ if there exists an open neighborhood $U$ of $x$ in $X$ and $V\subset F^n$, an analytic diffeomorphism $\phi:U\rightarrow V$ and $(y,\beta)\in T^*V$ such that $\phi(x)=y$, $d_x\phi^*(\beta)=\lambda$ and $(y,\beta)\in \wf((\phi^{-1})^*(T|_U))$.
\end{defn}

A local property of the wavefront sets, which enables us to keep information about the wavefront set of the restriction of a distribution to an open submanifold, is 
Proposition 2.1 in \cite{MR2631841}. We state it here without recalling its proof. 

\begin{prop}\label{prop 2.14}
    Let $U\subset X$ be an open submanifold, then for any $T\in\mathcal{D}^{'}(X)$, we have
    \begin{align*}
        \wf(T)_x=\wf(T|_U)_x
    \end{align*}
    for any $x\in U$.
\end{prop}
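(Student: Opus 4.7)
The plan is to exploit the fact that the wavefront set at a fixed point $x$ is defined via arbitrarily small coordinate charts around $x$, and since $U$ is an open subset containing $x$, we may always take such charts inside $U$. Both inclusions $\wf(T)_x \subset \wf(T|_U)_x$ and the reverse will then follow simultaneously from the observation that $T$ and $T|_U$ have the same restriction to any open $W \subset U$.

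Unpacking the manifold definition of $\wf(T)_x = \wf(T) \cap T^*_x X$: a covector $(x,\lambda)\in T^*X$ lies outside $\wf(T)$ iff there exists an open neighborhood $W$ of $x$, an analytic diffeomorphism $\phi:W\to V\subset F^n$, and $\beta\in T^*_{\phi(x)}V$ with $d_x\phi^*(\beta)=\lambda$, such that $(\phi(x),\beta)\notin \wf((\phi^{-1})^*(T|_W))$. By Lemma \ref{wf-diff}, this condition is independent of the particular chart chosen. In particular, I may insist from the outset that $W\subset U$; transitivity of restriction (the exact sequence \eqref{2.2}) then gives $T|_W=(T|_U)|_W$, and consequently
\[
(\phi^{-1})^*(T|_W)=(\phi^{-1})^*((T|_U)|_W).
\]
The wavefront sets of these two pullbacks therefore coincide pointwise, and in particular over $\phi(x)$, which proves both inclusions at once.

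The only step requiring mild care is the reduction to charts $W\subset U$: if $(x,\lambda)$ happens to be smooth for $T$ via a chart $W$ that extends outside $U$, one would like to shrink $W$ to a neighborhood contained in $U$ without losing the smoothness property at $(x,\lambda)$. This is justified by the vector-space version of locality, which in turn follows from the $p$-adic feature that a test function $\rho\in\D(V)$ with $\rho(\phi(x))\neq 0$ and $\CF(\rho T)$ asymptotically vanishing in direction $w_0$ may be replaced by $\rho\cdot\mathbf{1}_{W'}$ for an arbitrarily small compact open $W'\ni\phi(x)$; the new test function still has $\rho\mathbf{1}_{W'}(\phi(x))\neq 0$, lies in $\D(V)$ (as $\mathbf{1}_{W'}$ itself belongs to $\D(V)$), and yields the same germ of $T$ near $\phi(x)$, hence the same asymptotic behavior. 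I do not anticipate any serious obstacle: the proposition is essentially a bookkeeping verification that the manifold definition of the wavefront set behaves as its local-coordinate origin suggests, with Lemma \ref{wf-diff} supplying the only genuine analytic input.
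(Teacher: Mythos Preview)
The paper does not actually prove this proposition; it states it as Proposition~2.1 of \cite{MR2631841} and explicitly says ``We state it here without recalling its proof.'' So there is no in-paper argument to compare against, and your task was in effect to supply the omitted proof.

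Your outline is correct: the wavefront set at $x$ is determined by restrictions to arbitrarily small charts, so one may work inside $U$, where $T$ and $T|_U$ agree. The only real content is the shrinking step, and here your justification has a gap. You write that $\rho T$ and $\rho\mathbf{1}_{W'}T$ ``yield the same germ of $T$ near $\phi(x)$, hence the same asymptotic behavior.'' But the Fourier transform is a global operation: two compactly supported distributions agreeing near a point need not have Fourier transforms with the same asymptotics in a given direction. The germ heuristic does not by itself prove anything.

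The fix is short. Write $\rho\mathbf{1}_{W'}T=\mathbf{1}_{W'}\cdot(\rho T)$, so that
\[
\CF(\rho\mathbf{1}_{W'}T)=\CF(\mathbf{1}_{W'})\ast\CF(\rho T).
\]
Since $\CF(\mathbf{1}_{W'})\in\D(V^*)$ is compactly supported, it suffices to check that convolution with a compactly supported Schwartz function preserves asymptotic vanishing in direction $w_0$. If $g:=\CF(\rho T)$ satisfies $g(\lambda w)=0$ for $w$ in a ball $A\ni w_0$ and $|\lambda|>C$, and $h:=\CF(\mathbf{1}_{W'})$ has support in $\{|w'|\le R\}$, then for $w\in A$ and $|\lambda|$ large enough one has $w-w'/\lambda\in A$ by the ultrametric inequality, whence $(h\ast g)(\lambda w)=\int h(w')\,g(\lambda(w-w'/\lambda))\,dw'=0$. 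This is presumably the argument in the cited reference; with it in place your proof is complete.
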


We recall from \cite{MR3038552} and \cite{MR3406530} some useful facts on the wavefront sets, which will play important roles in the proof of this paper. 
Again, let $\mathbb{G}$ be a reductive group over $F$, and $G=\mathbb{G}(F)$ be its $F$-rational points equipped with the analytic topology.
The first is \cite[Theorem A]{MR3406530} about the wavefront set of $\mathfrak{z}$-finite distributions. 

\begin{thm}\label{thm 2.15}
    Let $T\in\mathcal{D}^{'}(G)$ be a $\mathfrak{z}$-finite distribution. Then for any $x\in G$ we have
    \begin{align*}
        \wf_x(T)\subset\mathcal{N},
    \end{align*}
    where $\mathcal{N}$ is the nilpotent cone of the dual $\Fg^*$ of the Lie algebra $\Fg$ of $G$, and we identify the Lie algebra $\Fg$ with $T_x(G)$ using the right action.
\end{thm}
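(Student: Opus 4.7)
The plan is to (i) reduce the statement to a neighborhood of the identity, (ii) transfer the question to the Lie algebra via an analytic chart, and (iii) exploit the convolution action of the Bernstein center as ``pseudo-differential operators'' that are elliptic off of $\CN$.

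For (i), the convolution action of $\Fz$ on $\D'(G)$ commutes with left translation, so if $T$ is annihilated by a finite-codimension ideal $I\subset\Fz$ then every left translate $L_gT$ is annihilated by the same $I$. Since $L_g$ is an analytic diffeomorphism, Lemma \ref{wf-diff} gives an identification $\wf_x(T)=\wf_e(L_{x^{-1}}T)$ under the right-translation trivialization of the cotangent bundle used in the statement. It therefore suffices to show $\wf_e(T)\subset\CN$. For (ii), choose a small compact open neighborhood $U$ of $e$ admitting an analytic chart $\phi:U\to V\subset\Fg$ with $\phi(e)=0$ and $d_e\phi=\mathrm{id}$, for instance the Cayley transform on a sufficiently deep congruence subgroup. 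By Lemma \ref{wf-diff} and Proposition \ref{prop 2.14}, the problem reduces to showing that the push-forward $S:=\phi_*(T|_U)$ satisfies $\wf_0(S)\subset\CN\subset\Fg^*$. Via the Baker--Campbell--Hausdorff formula, the convolution action of $\Fz$ near $e$ transfers, up to a smooth remainder, to an action on distributions on $\Fg$ by invariant essentially compact distributions.

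The heart of the proof is step (iii). Given a non-nilpotent covector $\xi\in\Fg^*$, one must produce $z\in\Fz$ whose convolution is $\xi$-elliptic, in the sense that $z*(\cdot)$ smooths out the wavefront direction $\xi$ unless the input has infinite-dimensional $\Fz$-span. Such $z$ is supplied by the Bernstein decomposition of $\Fz$ as a product, over Bernstein components of $\BG(F)$, of rings of regular functions on varieties of unramified twists, combined with Harish-Chandra descent to the Levi $M=Z_G(\xi_s)$ attached to the nontrivial semisimple part $\xi_s$ of $\xi$. Once such a $z$ is available, finite-codimensionality of $I$ yields a polynomial relation $p(z)\cdot T=0$ with $p(c)\neq0$ for the symbolic value $c=\sigma(z)(\xi)$, and elliptic regularity of convolution by $p(z)$ off $\CN$ then excludes $\xi$ from $\wf_0(S)$.

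The main obstacle is the ellipticity assertion in (iii). In the Archimedean setting this is essentially automatic from Chevalley restriction: the center of $U(\Fg)$ is identified with $\BC[\Fg^*]^G$, whose common zero locus is exactly $\CN$, so principal symbols of central differential operators cut out $\CN$ directly. The $p$-adic Bernstein center carries no differential-operator structure, and the required substitute for the principal symbol — an asymptotic analysis of the wavefront set of convolution by essentially compact invariant distributions, together with a construction of enough such elements adapted to each non-nilpotent orbit — is precisely the technical content of \cite{MR3406530}.
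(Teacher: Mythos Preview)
The paper does not supply its own proof of this statement; it is quoted directly as \cite[Theorem~A]{MR3406530} with no argument given. Your proposal is therefore not competing with a proof in the paper but rather sketching the content of the cited reference, and you say as much in your final paragraph. As a roadmap it is reasonable: the reductions in (i)--(ii) to a distribution near $0\in\Fg$ are standard, and you correctly isolate the real difficulty in (iii)---namely, producing for each non-nilpotent $\xi\in\Fg^*$ an element of $\Fz$ whose convolution action is ``elliptic'' in the direction $\xi$---as precisely the technical input that \cite{MR3406530} supplies and that cannot be improvised here. In that sense your proposal and the paper agree: both defer the substance to \cite{MR3406530}; you have simply added an outline of how the argument is organized.

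One small correction to (i): under the right-trivialization of $T^*G$ used in the statement, left translation by $x^{-1}$ introduces an $\mathrm{Ad}^*(x)$ twist between $\wf_x(T)$ and $\wf_e(L_{x^{-1}}T)$, not a literal identification. This is harmless for the conclusion since $\CN\subset\Fg^*$ is $\mathrm{Ad}^*$-stable, but it is worth stating explicitly.
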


Another useful fact is \cite[Theorem 4.1.2]{MR3038552} on the invariance of the wavefront sets with respect to the shift of the conormal bundle if the distribution is supported on a submanifold. 

\begin{thm}\label{thm 2.12}
    Let $Y\subset X$ be $F$-analytic manifolds and let $y\in Y$. Let $T\in\mathcal{D}^{'}(X)$ and suppose $\mathrm{supp}(T)\subset Y$. Then $\wf_y(T)$ is invariant with respect to the shifts by the conormal vector space $\cn^X_{Y,y}$.
\end{thm}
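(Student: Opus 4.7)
The plan is to reduce to a local coordinate model in which $Y$ becomes a coordinate slice, and then to run a direct Fourier-transform computation that exploits the defining $p$-adic feature that $\CF(\delta_{0})=1$.

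First, by the locality statement of Proposition \ref{prop 2.14} and the diffeomorphism invariance of Lemma \ref{wf-diff}, it suffices to work in an analytic chart around $y$ in which $X=U\times V$ with $U\subset F^{k}$ and $V\subset F^{n-k}$ open, $Y=U\times\{0\}$, and $y=(y_{0},0)$. Under this identification, $T_{y}X=F^{n}$, and $\cn^{X}_{Y,y}$ is exactly $\{0\}\times(F^{n-k})^{*}\subset T^{*}_{y}X$, the subspace of covectors annihilating $T_{y}Y$.

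Next, I would establish the local structure of $T$. After fixing compatible smooth densities, the condition $\supp T\subset Y$, combined with the factorization $\D(U\times V)=\D(U)\otimes \D(V)$ from \cite[\S 1.22]{MR0425030}, forces $T$ to be of the form $S\boxtimes\delta_{0}$ for some $S\in\D'(U)$. The decisive $p$-adic input is that any distribution on $F^{n-k}$ supported at $0$ is a multiple of $\delta_{0}$: since test functions are locally constant, such a distribution is determined by the single value at $0$. For the Fourier analysis, take any cutoff $\rho\in\D(X)$ with $\rho(y)\neq 0$. Then
\begin{align*}
\rho\, T \;=\; \bigl(\rho(\,\cdot\,,0)\,S\bigr)\boxtimes\delta_{0},
\end{align*}
whose Fourier transform is $(\xi,\eta)\mapsto \CF\bigl(\rho(\cdot,0)\,S\bigr)(\xi)\cdot 1$, constant in $\eta$. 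Hence $\CF(\rho T)$ vanishes asymptotically in a direction $(\xi_{0},\eta_{0})$ if and only if it does so in direction $(\xi_{0},\eta_{0}+\mu)$ for every $\mu\in(F^{n-k})^{*}$. Quantifying over all admissible cutoffs, this gives $\wf_{y}(T)=A\times(F^{n-k})^{*}$ for some cone $A\subset T^{*}_{y_{0}}U$, which is manifestly invariant under shifts by $\cn^{X}_{Y,y}$.

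The main obstacle I expect is the structural step: making the identification $T\equiv S\boxtimes\delta_{0}$ coordinate-independent (up to the choice of a transverse density) and checking that the passage between different charts really yields the conormal directions rather than an affine translate thereof. The Fourier computation itself is essentially automatic from $\CF(\delta_{0})=1$, which is the clean $p$-adic simplification: in contrast to the Archimedean case, where derivatives of $\delta_{0}$ would introduce polynomial $\eta$-factors and demand a jet-filtration analysis, in the $p$-adic setting the single delta term carries all the transverse information, so the conormal invariance holds on the nose at each $y\in Y$.
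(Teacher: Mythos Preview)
The paper does not give its own proof of this statement; it merely quotes it as \cite[Theorem 4.1.2]{MR3038552}. Your proposal therefore supplies what the paper omits, and the argument you sketch is correct.

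The crucial step is your observation that in the $l$-space setting any distribution on $V\subset F^{n-k}$ supported at $0$ is a scalar multiple of $\delta_0$ (there are no ``derivatives of delta'' because test functions are locally constant). From this and the tensor decomposition $\D(U\times V)=\D(U)\otimes\D(V)$ one indeed gets $T=S\boxtimes\delta_0$, and for any cutoff $\rho$ the identity $\rho T=(\rho(\cdot,0)S)\boxtimes\delta_0$ follows because $\rho(u,v)-\rho(u,0)$ vanishes on a neighbourhood of $U\times\{0\}$ by local constancy. The Fourier transform is then independent of the conormal variable $\eta$, and the shift $\sigma(\xi,\eta)\mapsto\sigma(\xi,\eta-\mu)$ in the asymptotic-vanishing test shows that smoothness at $(\xi_0,\eta_0)$ is equivalent to smoothness at $(\xi_0,\eta_0+\mu)$ for every $\mu$. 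This gives exactly the conormal invariance.

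Your stated ``main obstacle'' is not a real obstacle: you are working in a single fixed chart, so no coordinate-independence of the factorization $S\boxtimes\delta_0$ is needed; the invariance of $\wf_y(T)$ under the subspace $\{0\}\times(F^{n-k})^*$ is a statement in that chart, and Lemma~\ref{wf-diff} transports it back to the intrinsic conormal space $\cn^X_{Y,y}$. The only thing to check is that an analytic chart flattening $Y$ to a coordinate slice exists, which is the analytic implicit function theorem over $F$ (available in \cite{MR0176987}).
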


We can combine the above two theorems to obtain the following criterion:

\begin{prop}\label{prop 2.7}
    Let $G$ be the $F$-rational points of a reductive group over $F$ and $T$ be a $\mathfrak{z}$-finite distribution on $G$, where $\mathfrak{z}$ is the Bernstein center of $G$. 
    Let $M$ be an open submanifold of $G$ and $N$ a submanifold of $M$. If the restriction of $T$ on $M$ is supported on $N$ and for some point $x\in \mathrm{supp}(T)$ and there is a non-zero element in the conormal fiber $\cn^M_{N,x}$ that is not in the nilpotent cone $\mathcal{N}$ of $\mathfrak{g}^*$, where we identify the Lie algebra $\mathfrak{g}$ with the tangent space at $x$ of $G$ using the right action, then $T$ is supported in the complement of $M$ in $G$.
\end{prop}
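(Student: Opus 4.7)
The plan is to argue by contradiction, combining the conormal shift invariance from Theorem \ref{thm 2.12} with the nilpotent containment from Theorem \ref{thm 2.15}. Suppose there exists $x \in \supp(T) \cap M$, and let $\xi_0 \in \cn^M_{N,x} \setminus \CN$, $\xi_0 \neq 0$, be as in the hypothesis; I will extract a contradiction, which forces $\supp(T) \cap M = \emptyset$.

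First I would verify that $\wf_x(T|_M) \neq \emptyset$. Because $\cn^M_{N,x}$ contains a non-zero vector, $N$ has positive codimension in $M$ near $x$. If $\wf_x(T|_M)$ were empty, then $T|_M$ would be smooth at $x$, hence locally constant on an open neighborhood of $x$ in $M$. But any locally constant distribution on $M$ whose support is contained in the proper analytic submanifold $N$ must vanish near $x$, contradicting $x \in \supp(T|_M)$.

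Pick any $\xi \in \wf_x(T|_M)$. Theorem \ref{thm 2.12}, applied to $T|_M$ on $M$ with $Y = N$, tells us that $\wf_x(T|_M)$ is invariant under translations by $\cn^M_{N,x}$; in particular $\xi + c\xi_0 \in \wf_x(T|_M)$ for every $c \in F$. Since $M$ is open in $G$, Proposition \ref{prop 2.14} gives $\wf_x(T|_M) = \wf_x(T)$, and since $T$ is $\Fz$-finite, Theorem \ref{thm 2.15} forces
\[
\xi + c\xi_0 \in \CN \quad \text{for every } c \in F,
\]
after identifying $T^*_x G \cong \Fg^*$ via the right action.

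To close the argument, recall that $\CN \subset \Fg^*$ is the common zero locus of the $G$-invariant polynomials on $\Fg^*$ of positive degree. For any such polynomial $p$, homogeneous of degree $d$, the map $c \mapsto p(\xi + c\xi_0)$ is a polynomial in $c$ over $F$ of degree at most $d$ whose coefficient of $c^d$ equals $p(\xi_0)$. Since this polynomial vanishes on the infinite field $F$, it is identically zero, so $p(\xi_0) = 0$. Running $p$ over a generating set of the defining ideal of $\CN$ yields $\xi_0 \in \CN$, contradicting our choice of $\xi_0$. The only delicate step is the first: the non-emptiness of $\wf_x(T|_M)$ at a support point, which uses that a $p$-adic distribution with empty wavefront fiber at $x$ is locally constant near $x$. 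Everything else is a direct assembly of the cited wavefront results with the infinitude of $F$.
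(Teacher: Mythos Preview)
Your proof is correct, but it is considerably more elaborate than the paper's. The paper exploits a fact that short-circuits your polynomial argument entirely: under the convention used here (the wavefront set is defined as a subset of the full cotangent bundle, including the zero section), one has $(x,0)\in\wf_x(T|_M)$ whenever $x\in\supp(T|_M)$. Indeed, if $(x,0)\notin\wf(T|_M)$, the definition forces $\rho_1 T|_M=0$ for some $\rho_1$ with $\rho_1(x)\neq 0$, so $T|_M$ vanishes near $x$. With $0\in\wf_x(T|_M)$ in hand, the conormal shift invariance (Theorem~\ref{thm 2.12}) gives $\xi_0=0+\xi_0\in\wf_x(T|_M)\subset\CN$ immediately, and the contradiction is reached in one line.

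Your route---first arguing $\wf_x(T|_M)\neq\emptyset$ via a smoothness argument, then picking an arbitrary $\xi$, and finally running a homogeneity argument on $G$-invariant polynomials to extract $\xi_0\in\CN$ from $\xi+F\xi_0\subset\CN$---is valid, but each of those steps is avoidable. In effect, your first step already contains the key observation (it is essentially the statement that $0\in\wf_x$), but you then discard it by picking a generic $\xi$ rather than $\xi=0$. The polynomial argument is a nice trick, and it would be genuinely needed under a convention that excludes the zero section from $\wf$, but here it is unnecessary overhead.
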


\begin{proof}
    According to Theorem $A$ of \cite{MR3406530}, for any $x$ in $G$, the wavefront set of $T$ at $x$ is contained in $\mathcal{N}$, so is the wavefront set of $T|_{M}$ since the restriction to an open submanifold does not change the fiber of the wavefront set. Then due to Theorem 4.1.2 in \cite{MR3038552}, the wavefront set is invariant under the shift by the conormal space. If $T|_{M}\neq 0$ and $x\in\supp(T)$, then the point $(x,0)$ belongs to the wavefront set.  Hence the whole set $\cn^M_{N,x}$ is contained in 
    the wavefront set, which should then further be contained in the nilpotent cone $\mathcal{N}$. This leads a contradiction.
\end{proof}

\subsection{A general form of Gelfand-Kazhdan criterion}

In this section, we present a general form of the Gelfand-Kazhdan criterion over $p$-adic local fields in terms of distributions, which can be viewed as a counterpart of the more complicated one for Archimedean case as in \cite{MR2820401}.

\begin{thm}\label{thm 2.3}
Let $H_1$ and $H_2$ be two closed subgroups of a unimodular $l$-group $G$ with continuous characters
\begin{align*}
\chi_{H_i}:H_i\rightarrow \mathbb{C}^{\times}, \quad i=1,2.
\end{align*}
Let $\sigma$ be a continuous anti-involution of $G$. Assume that for every $T\in(\D^{\prime}(G))^{\infty}$, which is also an eigenvector of the Bernstein center $\mathfrak{z}=\D^{\prime}(G)^{G}_{e.c}$, the quasi-invariant properties: 
\begin{align*}
L_hT=\chi_{H_1}^{-1}(h)T, \quad s\in H_1,
\quad {\rm and}\quad 
R_hT=\chi_{H_2}^{-1}(h)T,\quad s\in H_2,
\end{align*}
implies that $\sigma T=T$. Then for any irreducible admissible representation $(\pi,V)$ of $G$, one has
\begin{align*}
\mathrm{dim}\;\mathrm{Hom}_{H_1}(\pi,\chi_{H_1})\times\mathrm{dim}\;\mathrm{Hom}_{H_2}(\pi^{\vee},\chi_{H_2})\leq 1,
\end{align*}
where $(\pi^{\vee},V^{\vee})$ is the contragredient of $\pi$.
\end{thm}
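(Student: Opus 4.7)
The strategy is to adapt the classical Gelfand--Kazhdan approach, using matrix-coefficient distributions as the bridge between the representation-theoretic Hom spaces and the bi-equivariant, $\Fz$-eigen distributions controlled by the hypothesis.

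For each pair $\ell_1 \in \Hom_{H_1}(\pi,\chi_{H_1})$ and $\ell_2 \in \Hom_{H_2}(\pi^{\vee},\chi_{H_2})$, I construct a matrix-coefficient distribution $T_{\ell_1,\ell_2} \in \D'(G)$ by
\[
T_{\ell_1,\ell_2}(f)\;:=\;\ell_2\bigl(\ell_1\circ\pi(f)\bigr),\qquad f\in\D(G),
\]
where $\pi(f):V\to V$ is the smoothing convolution operator and $\ell_1\circ\pi(f)\in V^{\vee}$ lies in the smooth dual because $\pi(f)$ factors through a finite-dimensional $K$-fixed subspace by admissibility. A direct computation then verifies that $T_{\ell_1,\ell_2}$ is (i) left $(H_1,\chi_{H_1}^{-1})$-equivariant and right $(H_2,\chi_{H_2}^{-1})$-equivariant, matching the convention on $\D'_{\chi}$ adopted in Section \ref{section 2}; (ii) smooth, i.e., lies in $(\D'(G))^{\infty}$, since it is built via a $\D(G)$-convolution with the smooth representation $\pi$; and (iii) an eigenvector of the Bernstein center $\Fz$ with eigenvalue equal to the scalar by which $\Fz$ acts on the irreducible admissible $\pi$, by Schur's lemma. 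Moreover, the assignment
\[
\Phi:\ \Hom_{H_1}(\pi,\chi_{H_1})\otimes\Hom_{H_2}(\pi^{\vee},\chi_{H_2})\longrightarrow \D'(G),\qquad \ell_1\otimes\ell_2\mapsto T_{\ell_1,\ell_2},
\]
is injective, by the standard non-degeneracy of matrix coefficients for irreducible admissible representations.

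By the hypothesis of the theorem, every distribution in the image of $\Phi$ is $\sigma$-invariant. A direct calculation using that $\sigma$ is an anti-involution identifies $\sigma T_{\ell_1,\ell_2}$ with the analogous matrix-coefficient distribution for the $\sigma$-twisted representation $\pi^{\sigma}$, in which the roles of the two functionals are interchanged via the natural transports induced by $\sigma$; concretely, $\sigma T_{\ell_1,\ell_2} = T^{\pi^{\sigma}}_{\sigma(\ell_2),\sigma(\ell_1)}$. If $\pi\not\cong\pi^{\sigma}$, then the two sides of $T_{\ell_1,\ell_2} = \sigma T_{\ell_1,\ell_2}$ lie in distinct $\Fz$-isotypic components of $\D'(G)$, forcing $T_{\ell_1,\ell_2} = 0$ for every $(\ell_1,\ell_2)$; by the injectivity of $\Phi$ this gives $m_1 m_2 = 0$, already yielding the desired bound. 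Otherwise $\pi\cong\pi^{\sigma}\cong\pi^{\vee}$; fixing such an isomorphism identifies the two Hom spaces through $\sigma$, and the equation $T_{\ell_1,\ell_2} = \sigma T_{\ell_1,\ell_2}$ together with the injectivity of $\Phi$ translates into the tensor identity
\[
\ell_1\otimes\ell_2 \;=\; \sigma(\ell_2)\otimes\sigma(\ell_1)
\]
in the common Hom space under the identification.

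To conclude $m_1 m_2 \leq 1$, where $m_1:=\dim\Hom_{H_1}(\pi,\chi_{H_1})$ and $m_2:=\dim\Hom_{H_2}(\pi^{\vee},\chi_{H_2})$, suppose toward a contradiction that $m_1\geq 2$ (the case $m_2\geq 2$ is analogous). Choose linearly independent $\ell_1, \ell_1'\in\Hom_{H_1}(\pi,\chi_{H_1})$ and any nonzero $\ell_2\in\Hom_{H_2}(\pi^{\vee},\chi_{H_2})$. Applying the tensor identity to both $(\ell_1,\ell_2)$ and $(\ell_1',\ell_2)$ forces each of $\ell_1$ and $\ell_1'$ to be a scalar multiple of $\sigma(\ell_2)$, contradicting their linear independence. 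The main obstacles in executing this plan are verifying step (iii) --- that $T_{\ell_1,\ell_2}$ genuinely inherits the scalar action of $\Fz$, which requires some care with the admissibility isomorphism $V\cong(V^{\vee})^{\vee}$ and the convolution identities --- and the explicit identification of $\sigma T_{\ell_1,\ell_2}$ with a matrix coefficient for $\pi^{\sigma}$, which requires careful bookkeeping of sign conventions for anti-involutions acting on distributions. Once these are in place, the rest of the argument is standard within the Gelfand--Kazhdan framework.
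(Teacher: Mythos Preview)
Your overall strategy is in the Gelfand--Kazhdan spirit and the construction of $T_{\ell_1,\ell_2}$, its equivariance, smoothness, and $\Fz$-eigenvector property are correct. The injectivity of $\Phi$ is also fine (it follows from Jacobson density for irreducible admissible representations). However, the argument has a genuine gap at the case split.

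The claim that ``if $\pi\not\cong\pi^{\sigma}$, then the two sides lie in distinct $\Fz$-isotypic components'' is not justified and is false in general: two non-isomorphic irreducible admissible representations can share the same Bernstein infinitesimal character (for instance, distinct constituents of a reducible principal series), so $\pi\not\cong\pi^{\sigma}$ does not force $\lambda_\pi\neq\lambda_{\pi^{\sigma}}$. Your argument therefore does not exclude the possibility that a nonzero generalized matrix coefficient for $\pi$ equals one for a non-isomorphic $\pi^{\sigma}$. A second, related issue: the tensor identity $\ell_1\otimes\ell_2=\sigma(\ell_2)\otimes\sigma(\ell_1)$ presupposes that $\sigma(\ell_2)$ lands in $\Hom_{H_1}(\pi,\chi_{H_1})$ and $\sigma(\ell_1)$ in $\Hom_{H_2}(\pi^{\vee},\chi_{H_2})$; this requires $\sigma(H_2)=H_1$ and $\chi_{H_2}\circ\sigma=\chi_{H_1}$, which the theorem does not assume. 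So even in the case $\pi\cong\pi^\sigma$ the injectivity of $\Phi$ cannot be applied as written, because the two tensors live in a priori different tensor products.

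The paper avoids both difficulties by never introducing $\pi^{\sigma}$ or a case split. It fixes one matrix coefficient $\varphi_{v^{\vee},v}(g)=\langle\pi(g)v,v^{\vee}\rangle$ with $v^{\vee}\in\Hom_{H_1}(\pi,\chi_{H_1})$ and $v\in\Hom_{H_2}(\pi^{\vee},\chi_{H_2})$, applies the hypothesis to get $\sigma\varphi=\varphi$, and from this single identity extracts the kernel symmetry
\[
\pi(f)v=0 \iff \pi^{\vee}\bigl((\sigma f)^{\vee}\bigr)v^{\vee}=0 \qquad (f\in\D(G)),
\]
using only irreducibility of $\pi$. Applying this with a second $v'\in\Hom_{H_2}(\pi^{\vee},\chi_{H_2})$ gives that $f\mapsto\pi(f)v$ and $f\mapsto\pi(f)v'$ have the same kernel in $\D(G)$, and Schur's lemma then forces $v'\in\BC v$. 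This route is shorter, needs no comparison with $\pi^\sigma$, no hypothesis on how $\sigma$ relates $H_1$ to $H_2$, and no injectivity of a tensor map; I recommend you rewrite along these lines.
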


\begin{proof}
The argument is standard (see \cite{MR2820401}). Suppose that both $\mathrm{Hom}_{H_1}(\pi,\chi_{H_1})$ and $\mathrm{Hom}_{H_2}(\pi^{\vee},\chi_{H_2})$ are nonzero. Take
\begin{align*}
0\neq v\in \mathrm{Hom}_{H_2}(\pi^{\vee},\chi_{H_2})\subset V
\quad {\rm and}
\quad 
0\neq v^{\vee} \in \mathrm{Hom}_{H_2}(\pi,\chi_{H_1})\subset V^{\vee}.
\end{align*}
Consider the matrix coefficient $\varphi_{v^{\vee},v}(g)=\langle \pi(g)v,v^{\vee} \rangle \in C^{\infty}(G)\hookrightarrow \D^{\prime}(G)$ by a fixed a Haar measure $dg$ on $G$. It is clear as functions on $G$, we have
\begin{itemize}
	\item [(1)] $\varphi_{v^{\vee},v}(hg)=\chi_{H_1}(h)\varphi_{v^{\vee},v}(g), \forall h\in H_1,$
	\item [(2)] $\varphi_{v^{\vee},v}(gh)=\chi_{H_2}^{-1}(h)\varphi_{v^{\vee},v}(g), \forall h\in H_2.$
\end{itemize}
Hence as a distribution, we have
\begin{itemize}
\item [(1)] $L_h \varphi_{v^{\vee},v}=\chi_{H_1}^{-1}(h)\varphi_{v^{\vee},v}, \forall h\in H_1, $
\item [(2)] $R_h\varphi_{v^{\vee},v}=\chi_{H_2}^{-1}(h)\varphi_{v^{\vee},v},\forall h\in H_2.$
\end{itemize}
Then we obtain that 
\begin{align}\label{2.3}
\sigma \varphi_{v^{\vee},v}=\varphi_{v^{\vee},v}
\end{align}
 as distributions since it is clear that $\varphi_{v^{\vee},v}$ is an eigenvector for the Bernstein center.

We claim that for all $f\in \mathcal{D}(G)$, $fv=0$ if and only if $(\sigma f)^{\vee}v^{\vee}=0$, where the action of $f$ is given via the above fixed Haar measure $dg$ and $f^{\vee}(g):=f(g^{-1})$ for functions $f$.

In fact, by using the irreducibility of $\pi$, we deduce that $fv=0$ if and only if
\begin{align*}
\langle g(fv),v^{\vee}\rangle=0,\forall g\in G,
\quad {\rm i.e.}\quad 
\langle L_g(f)v,v^{\vee}\rangle=0,\forall g\in G.
\end{align*}
As a consequence of (\ref{2.3}), this is equivalent to $\langle \sigma(L_g f )v,v^{\vee}\rangle=0 $.
Now the claim follows since
\begin{align*}
\langle \sigma(L_g f )v,v^{\vee}\rangle=\langle \sigma(f)(\sigma(g)v),v^{\vee}\rangle=\langle \sigma(g)v,(\sigma(f))^{\vee}v^{\vee})
\end{align*}

Finally, let $0\neq v^{\prime}\in\mathrm{Hom}_{H_2}(V^{\vee},\chi_{H_2})\subset V $ be anothere element. Apply the above claim twice, we get for all $f\in \D(G)$, $fv=0$ if and only if $fv^{\prime}=0$. Therefore the two $G$-homomorphism $f\mapsto fv$ and $f\mapsto fv^{\prime}$ frow $\D(G)$ to $V$ have the same kernel, say $J$, where the representation on $\D(G)$ is the left regular representation. Using Schur's lemma, we know that $v^{\prime}$ is a scalar of $v$, which proves
\begin{align*}
\mathrm{dim}\;\mathrm{Hom}_{H_1}(V,\chi_{H_1})=1.
\end{align*}
The same argument shows
\begin{align*}
\mathrm{dim}\;\mathrm{Hom}_{H_2}(V^{\vee},\chi_{H_2})=1.
\end{align*}
\end{proof}

\section{Proof of Theorem \ref{mthm}}\label{section 3}

\subsection{Standard argument}
We return to the group $G= \GL_6(F)$. Recall the subgroup $S$ and its character $\chi_{S}$ are as defined in Section \ref{introduction}. As in \cite{MR2763736}, we set \begin{align*}
    H=S \times S \; \mathrm{and} \; \chi= \chi_S \otimes \chi_S.
\end{align*}
Take the anti-involution $\tau$ of $G$ to be the transpose of matrices, and let $H$ act on $G$ by 
\begin{align*}
    (g_1,g_2)x=g_1xg_2^{\tau},
\end{align*}
as in \cite{MR2763736}.

By Theorem \ref{thm 2.3}, in order to prove Theorem \ref{mthm}, it is sufficient to prove the following theorem. 

\begin{thm}\label{thm 3.1}
    Let $T$ be a distribution on $G$, which is an eigenvector of the Bernstein center $\mathfrak{z}=\D^{\prime}(G)^{G}_{e.c.}$. If $T$ satisfies the following quasi-invariant property: 
    \begin{align*}
        (s_1,s_2)T=\chi^{-1}(s_1,s_2)T, \; for \; all \; (s_1,s_2)\in H, \; \mathrm{i.e.} \; T \in \D^{\prime}(G)_{\chi},
    \end{align*}
    then the distribution $T$ is $\tau$-invariant, i.e.,
    $T= T^{\tau}.$
\end{thm}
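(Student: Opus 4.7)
\emph{Plan of proof.} Set $T' := T - \tau T$. Then $T'$ is still an $(H,\chi)$-equivariant $\mathfrak{z}$-eigendistribution on $G$, and it is $\tau$-anti-invariant by construction; the theorem is thus equivalent to the assertion $T' = 0$. The strategy is to stratify $G = \GL_6(F)$ into a finite collection of $H$-stable locally closed pieces, arrange them into a filtration by closed $H$-stable subsets, and then use the left-exact sequence \eqref{2.4} together with Lemma \ref{lem 2.1} to reduce $T' = 0$ to the vanishing of $(H,\chi)$-equivariant $\tau$-anti-invariant $\mathfrak{z}$-eigendistributions on each stratum. The natural stratification comes from the double cosets of $P_{2^3}(F) \times P_{2^3}^\tau(F)$ acting on $G$, which form a finite list of twenty-one orbits as in the Archimedean treatment of \cite{MR2763736}; one may need to refine it further so that the smaller group $H = S \times S$ acts with tractable stabilizers.

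\emph{Case analysis on strata.} Each stratum should fall into one of three classes. First, if the $H$-stabilizer of a representative is unipotent and the restriction of $\chi$ to it is non-trivial, then Theorem \ref{thm 2.1} via Remark \ref{rem .2} gives $\mathcal{D}^{\prime}_{\chi} = 0$, so $T'$ vanishes outright. Second, if the stratum contains a base point $g_0$ and an element $h_0 \in H$ with $h_0 g_0 = \tau(g_0)$ and $\chi(h_0) = 1$, Lemma \ref{lem 2.7} shows that every $(H,\chi)$-equivariant distribution on that stratum is automatically $\tau$-invariant, forcing the $\tau$-anti-invariant $T'$ to vanish. A bookkeeping exercise along the double-coset list, testing stabilizers and base-point symmetries, should dispatch most strata by one of these two elementary mechanisms.

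\emph{The hard strata.} A small number of residual ``open'' strata resist both mechanisms, and must be treated with the wavefront-set machinery. On such an open submanifold $M$, the first step is a descent argument combined with the Bernstein localization principle (Corollary \ref{cor 2.10}) to force $\mathrm{supp}(T'|_M)$ into a proper submanifold $N \subset M$. Then Theorem \ref{thm 2.15} confines $\wf(T')$ to the nilpotent cone $\mathcal{N} \subset \mathfrak{g}^*$, while Theorem \ref{thm 2.12} forces $\wf(T'|_M)$ to be invariant under translation by the conormal fiber $\cn^M_{N,x}$. Proposition \ref{prop 2.7} then closes the argument provided one can exhibit a non-nilpotent element in $\cn^M_{N,x}$ at some point $x \in \mathrm{supp}(T'|_M)$, contradicting the nilpotent-cone confinement.

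\emph{Main obstacle.} The delicate part is class (iii). On each residual open stratum, the descent leaves a support submanifold whose conormal bundle must be computed explicitly at a strategically chosen boundary point and then tested against $\mathcal{N}$. I expect these open strata to come in pairs interchanged by the transpose $\tau$, so that the analysis of one member of each pair propagates to the other by symmetry. Producing the non-nilpotent conormal vector in a geometrically natural way -- ideally after identifying a local $H$-slice in the sense of Definition \ref{defn-1} and using Lemma \ref{lem 2.2} to restrict $T'$ cleanly -- is the principal technical hurdle; everything else is expected to follow from the descent framework and the equivariant-distribution lemmas collected in Section \ref{section 2}.
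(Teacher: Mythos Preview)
Your plan is correct and mirrors the paper's approach almost exactly: the paper packages the $\tau$-anti-invariance by passing to the extended group $\widetilde{H}=\{1,\tau\}\ltimes H$ with character $\widetilde{\chi}(\tau)=-1$, stratifies $G$ by the twenty-one $P_{2^3}\times P_{2^3}^\tau$ double cosets, and dispatches the pieces via exactly the three mechanisms you name (unipotent $\chi$-incompatibility, Lemma~\ref{lem 2.7} on symmetric representatives, and the wavefront-set contradiction on four residual pieces paired by $\tau$). One small correction: $T'=T-T^\tau$ is only $\mathfrak{z}$-\emph{finite}, not a priori a $\mathfrak{z}$-eigenvector, since $T^\tau$ may have a different eigencharacter; this is immaterial because Theorem~\ref{thm 2.15} needs only $\mathfrak{z}$-finiteness.
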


It will be slightly more convenient to work with the following (twisted by $\tau$) group 
\begin{align*}
    \widetilde{H}=\{1,\tau\}\ltimes H,
\end{align*} 
where the semidirect product is defined by the action
\begin{align*}
    \tau(g_1,g_2)=(g_2,g_1).
\end{align*}
Extend $\chi$ to a character  $\widetilde{\chi}$ of  $\widetilde{H}$ by requiring that 
$\widetilde{\chi}(\tau)=-1,$
and extend the action of $H$ on $G$ to that of $\widetilde{H}$ by putting
$\tau(x)=x^{\tau}.$ 

Let $T \in \mathcal{D}^{\prime}(G)_{\chi}$ be an eigenvector under the action of the Bernstein center $\mathfrak{z}=\D^{\prime}(G)^{G}_{e.c.}$  as in the setting of Theorem \ref{thm 3.1}. Then it is easy to check that 
$T-T^{\tau} \in \D^{\prime}(G)_{\widetilde{\chi}}$. 
One can see easily by definition that $T^{\tau}$ is also an eigenvector under the action of the Bernstein center $\Fz$. It follows that $T-T^{\tau}$ is $\mathfrak{z}$-finite. Hence Theorem \ref{thm 3.1} is equivalent to the following

\begin{thm}\label{prop 3.2}
    If  $T \in \D^{\prime}(G)_{\widetilde{\chi}}$ is a $\mathfrak{z}$-finite distribution, then $T=0$.    
\end{thm}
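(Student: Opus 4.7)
The plan is to prove Theorem \ref{prop 3.2} by a stratification of $G = \GL_6(F)$ into finitely many $\widetilde{H}$-invariant locally closed submanifolds, combined with a vanishing statement for $\mathcal{D}'_{\widetilde{\chi}}$ on each piece. The starting point is the decomposition of $G$ into the $21$ orbits of $P_{2^3} \times P_{2^3}^{\tau}$ acting by $(p_1,p_2)\cdot x = p_1 x p_2^{\tau}$, as in \cite{MR2763736}; one then arranges these orbits into a filtration by $\widetilde{H}$-invariant closed subsets so that Lemma \ref{lem 2.1} reduces the problem to the vanishing of $\mathcal{D}'_{\widetilde{\chi}}$ on the successive open quotients. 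For most of these strata, the vanishing should follow from one of two standard tools: (i) the character non-matching criterion of Theorem \ref{thm 2.1}, together with its unipotent specialization noted in Remark \ref{rem .2}, which disposes of strata whose stabilizers are incompatible with $\chi$; and (ii) the symmetry argument of Lemma \ref{lem 2.7}, which in combination with $\widetilde{\chi}(\tau) = -1$ forces any $(H,\chi)$-equivariant distribution on such a stratum to satisfy both $T^{\tau} = T$ and $T^{\tau} = -T$, hence $T = 0$. Where necessary, a descent through a local $H$-slice (Lemma \ref{lem 2.2} together with Lemma \ref{lem-prod}) would further reduce certain strata to Multiplicity One problems on lower-dimensional spaces, most notably the trilinear $\GL_2$ model and the $(\GL_2,\GL_1)$ model, which can be settled by the oscillator representation arguments imported from \cite{MR2763736}.

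The points where the Archimedean proof of \cite{MR2763736} relied on transversality of vector fields and metrical properness are exactly the places where this program needs a new ingredient in the $p$-adic setting. For these, the plan is to invoke the wavefront set theory of \cite{MR3406530, MR3038552}: on an open submanifold $M$ of a relevant stratum, a $\mathfrak{z}$-finite equivariant distribution whose support lies in a proper submanifold $N \subset M$ must, by Theorem \ref{thm 2.15} and Theorem \ref{thm 2.12}, have its wavefront set contain the whole conormal bundle $\mathrm{CN}^{M}_{N,x}$ at every support point $x$, and this conormal bundle must be contained in the nilpotent cone $\mathcal{N} \subset \mathfrak{g}^{*}$. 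Exhibiting a single conormal covector that is \emph{not} nilpotent therefore forces $T|_{M} = 0$; this is exactly Proposition \ref{prop 2.7}. This refined argument is to be applied to the four open pieces $G^2_{\mathrm{open},1}, G^2_{\mathrm{open},2}, G^5_{\mathrm{open},1}, G^5_{\mathrm{open},2}$ described in Section \ref{ssec-SG}, corresponding to the four technical propositions \ref{0=TG3}, \ref{0=TG4}, \ref{0=TG6}, \ref{0=TG-6}. The involutive pairing of these submanifolds suggests that one first treats the two members of each pair $\bigl(G^2_{\mathrm{open},1}, G^2_{\mathrm{open},2}\bigr)$ and $\bigl(G^5_{\mathrm{open},1}, G^5_{\mathrm{open},2}\bigr)$ in parallel, exploiting the symmetry to halve the work.

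Putting this together, the order I would follow is: set up the filtration of $G$ by unions of $P_{2^3} \times P_{2^3}^{\tau}$-orbits; successively peel off strata on which either the character criterion, the unipotent $\chi$-incompatibility, or Lemma \ref{lem 2.7} already kills $\mathcal{D}'_{\widetilde{\chi}}$; reduce the remaining strata via local slices to simpler model problems; and handle the four distinguished open pieces by the wavefront set argument. The main obstacle I expect is the last step: after slice reduction on, say, $G^2_{\mathrm{open},1}$ or $G^5_{\mathrm{open},1}$, one must identify explicitly the submanifold $N$ carrying the support of any would-be $T|_{M}$ and then exhibit a conormal covector lying outside $\mathcal{N}$. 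Choosing coordinates adapted to the slice in which the conormal fiber can be written down concretely, and verifying that some distinguished element of this fiber has a non-nilpotent semisimple part in $\mathfrak{g}^{*}$, will be the delicate computational core of the proof; this is precisely what Sections \ref{sec-TG3} and \ref{sec-TG6} of the paper must carry out.
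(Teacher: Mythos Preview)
Your proposal is correct and follows essentially the same strategy as the paper: stratify $G$ by the $21$ $P_{2^3}\times P_{2^3}^{\tau}$-orbits, arrange them into the open filtration \eqref{openfilter}, kill most strata by the unipotent $\chi$-incompatibility (Remark~\ref{rem .2}) or the symmetry argument (Lemma~\ref{lem 2.7}), descend through slices to low-rank models, and use the wavefront/conormal criterion (Proposition~\ref{prop 2.7}) at the stubborn places. Two small corrections: the wavefront argument is used only in Propositions~\ref{0=TG3} and~\ref{0=TG6} (on the sub-strata $HZ_4$, $HW_4$, and the residual $H\mathfrak{G}_R'$ pieces of $G^5_{\mathrm{open},i}$), while Propositions~\ref{0=TG4} and~\ref{0=TG-6} are handled entirely by the standard character/symmetry tools; and in the $p$-adic setting the trilinear $\GL_2$ and $(\GL_2,\GL_1)$ reductions are dispatched by quoting \cite{MR2637582} (Proposition~4.5 and Lemma~4.6) rather than the oscillator representation used in \cite{MR2763736}.
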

The proof of Theorem \ref{prop 3.2} will be given in Sections \ref{ssec-SG} and  \ref{Pf32}.

\subsection{Stratification of $G$}\label{ssec-SG}
Let $P=P_{2^3}$ be the standard parabolic subgroup of $G=\GL_6$ as introduced in \eqref{P222}, whose elements are of the form:
\begin{align*}
     \begin{pmatrix}
        a_1 & b & d \\ 0 & a_2 & c \\ 0 & 0 &a_3
    \end{pmatrix} \in G. 
\end{align*}
In order to prove Theorem \ref{prop 3.2}, we have to figure out the $H$-orbits in $G$. Since $S\subset P$, as a first approximation, we follow \cite[Section 4]{MR2763736} to introduce a stratification of $G$ 
with respect to the action of $P\times P^\tau$. 

For $x\in G$, we define its rank-matrix
\begin{align*}
    R(x)=\begin{pmatrix}
        \mathrm{rank}_{4\times 4}(x) & \mathrm{rank}_{4\times 2}(x)\\
        \mathrm{rank}_{2\times 4}(x) & \mathrm{rank}_{2\times 2}(x)
    \end{pmatrix},
\end{align*}
where $\mathrm{rank}_{i\times j}(x)$ is the rank of the lower right $i\times j$ block of $x$. Then $R(x)$ takes the following $21$ possible rank-matrices (see \cite[Section 4]{MR2763736}):

 \small{   \begin{center}
 $   \begin{pmatrix}
        4 & 2 \\ 2 & 2
    \end{pmatrix},\begin{pmatrix}
        4 & 2 \\ 2 & 1
    \end{pmatrix},\begin{pmatrix}
        4 & 2 \\ 2 & 0
    \end{pmatrix},$\end{center}
\begin{center}
   $ \begin{pmatrix}
        3 & 2 \\ 2 & 2
    \end{pmatrix}, \begin{pmatrix}
        3 & 2 \\ 2 & 1
    \end{pmatrix},\begin{pmatrix}
        3 & 2 \\ 1 & 1
    \end{pmatrix},\begin{pmatrix}
        3 & 1 \\ 2 & 1
    \end{pmatrix},\begin{pmatrix}
        3 & 1 \\ 1 & 1
    \end{pmatrix},\begin{pmatrix}
        3 & 2 \\ 1 & 0
    \end{pmatrix},\begin{pmatrix}
        3 & 1 \\ 2 & 0
    \end{pmatrix},\begin{pmatrix}
        3 & 1 \\ 1 & 0
    \end{pmatrix},$\end{center}
    \begin{center}$
    \begin{pmatrix}
        2 & 2 \\ 2 & 2
    \end{pmatrix},\begin{pmatrix}
        2 & 2 \\ 1 & 1
    \end{pmatrix},\begin{pmatrix}
        2 & 1 \\ 2 & 1
    \end{pmatrix},\begin{pmatrix}
        2 & 1 \\ 1 & 1
    \end{pmatrix},\begin{pmatrix}
        2 & 2 \\ 0 & 0
    \end{pmatrix},\begin{pmatrix}
        2 & 0 \\ 2 & 0
    \end{pmatrix},\begin{pmatrix}
        2 & 1 \\ 1 & 0
    \end{pmatrix},\begin{pmatrix}
        2 & 1 \\ 0 &0
    \end{pmatrix},\begin{pmatrix}
        2 & 0 \\ 1 & 0
    \end{pmatrix},\begin{pmatrix}
        2 & 0 \\ 0 & 0
    \end{pmatrix}.$
\end{center}}
For $R$ one of the above rank-matrices, define
\begin{align}\label{GR}
    G_R:=\{x\in G\;|\; R(x)=R \},
\end{align}
which is a $P$-$P^{\tau}$ double coset, hence $H$-invariant. Note that 
\begin{align}\label{GGR}
    G=\bigsqcup_R G_R,
\end{align}
where $R$ runs over the all rank-matrices above.

In the following, we define some open $H$-invariant submanifolds of $G$.
\begin{align}\label{G6op}
    G_{\mathrm{open}}^6:=\bigsqcup_R G_R,
\end{align}
where $R$ runs through the six rank-matrices:
\[ \begin{pmatrix}
        4 & 2 \\ 2 & 2
    \end{pmatrix},\begin{pmatrix}
        4 & 2 \\ 2 & 1
    \end{pmatrix},\begin{pmatrix}
        3 & 2 \\ 2 & 2
    \end{pmatrix},\begin{pmatrix}
        3 & 2 \\ 2 & 1
    \end{pmatrix}, \begin{pmatrix}
        3 & 2 \\ 1 & 1
    \end{pmatrix}, \begin{pmatrix}
        3 & 1 \\ 2 & 1
    \end{pmatrix}.
    \]
Write
\begin{align}\label{G5op1}
    G^5_{\mathrm{open},1}:=\bigsqcup_R G_R,
\end{align}
where $R$ runs from the five rank-matrices:
$    \begin{pmatrix}
        4 & 2 \\ 2 & 2
    \end{pmatrix},\begin{pmatrix}
        4 & 2 \\ 2 & 1
    \end{pmatrix},\begin{pmatrix}
        3 & 2 \\ 2 & 2
    \end{pmatrix},\begin{pmatrix}
        3 & 2 \\ 2 & 1
    \end{pmatrix},\begin{pmatrix}
        3 & 1 \\ 2 & 1
    \end{pmatrix},$
and
\begin{align}\label{G5op2}
    G^5_{\mathrm{open},2}:=\bigsqcup_R G_R,
\end{align}
where $R$ runs from the five rank-matrices:
$  \begin{pmatrix}
        4 & 2 \\ 2 & 2
    \end{pmatrix},\begin{pmatrix}
        4 & 2 \\ 2 & 1
    \end{pmatrix},\begin{pmatrix}
        3 & 2 \\ 2 & 2
    \end{pmatrix},\begin{pmatrix}
        3 & 2 \\ 2 & 1
    \end{pmatrix},\begin{pmatrix}
        3 & 2 \\ 1 & 1
    \end{pmatrix}.$
It is clear that 
\begin{align}\label{G6=G51G52}
    G_{\mathrm{open}}^6=G^5_{\mathrm{open},1}\cup G^5_{\mathrm{open},2}.
\end{align}
We define
\begin{align}\label{G4op}
    G^4_{\mathrm{open}}:=\bigsqcup_R G_R,
\end{align}
where $R$ runs through the four rank-matrices:
$    \begin{pmatrix}
        4 & 2 \\ 2 & 2
    \end{pmatrix},\begin{pmatrix}
        4 & 2 \\ 2 & 1
    \end{pmatrix},\begin{pmatrix}
        3 & 2 \\ 2 & 2
    \end{pmatrix},\begin{pmatrix}
        3 & 2 \\ 2 & 1
    \end{pmatrix};$
and 
\begin{align}\label{G3op}
G^3_{\mathrm{open}}:=\bigsqcup_R G_R,
\end{align}
where $R$ runs through the three rank-matrices:
$    \begin{pmatrix}
        4 & 2 \\ 2 & 2
    \end{pmatrix},\begin{pmatrix}
        4 & 2 \\ 2 & 1
    \end{pmatrix},\begin{pmatrix}
        3 & 2 \\ 2 & 2
    \end{pmatrix}.$
Finally, we define 
\begin{align}\label{G2op1}
    G^2_{\mathrm{open},1}:= \bigsqcup_R G_R,
\end{align}
where $R$ runs through the two rank-matrices:
$    \begin{pmatrix}
        4 & 2 \\ 2 & 2
    \end{pmatrix},\begin{pmatrix}
        3 & 2 \\ 2 & 2
    \end{pmatrix};$
and 
\begin{align}\label{G2op2}
    G^2_{\mathrm{open},2}:= \bigsqcup_R G_R,
\end{align}
where $R$ runs through the two rank-matrices:
$    \begin{pmatrix}
        4 & 2 \\ 2 & 2
    \end{pmatrix},\begin{pmatrix}
        4 & 2 \\ 2 & 1
    \end{pmatrix}.$
It is clear that 
\begin{align}\label{G3=G21G22}
    G^3_{\mathrm{open}}=G^2_{\mathrm{open},1}\cup G^2_{\mathrm{open},2}.
\end{align}
Thus we obtain a sequence of open subsets of $G$:
\begin{align}\label{openfilter}
    G^3_{\mathrm{open}}=G^2_{\mathrm{open},1}\cup G^2_{\mathrm{open},2}\subset G^4_{\mathrm{open}}\subset G^6_{\mathrm{open}}=G^5_{\mathrm{open},1}\cup G^5_{\mathrm{open},2}\subset G,
\end{align}
which indicates the main steps in the proof of Theorem \ref{prop 3.2}.

\subsection{The proof of Theorem \ref{prop 3.2}}\label{Pf32}
We are going to prove Theorem \ref{prop 3.2} based on the sequence of open subsets given in \eqref{openfilter}. 

Assume that $T \in \D^{\prime}(G)_{\widetilde{\chi}}$ is $\mathfrak{z}$-finite as in Theorem \ref{prop 3.2}. We denote by $T|_{G^i_{\mathrm{open}}}\in \mathcal{D}^{\prime}_{\chi}(G^i_{\mathrm{open}})$ the restriction of $T$ to the open $H$-invariant submanifold $G^i_{\mathrm{open}}$ with $i=3,4,6$, and by $T|_{G^k_{\mathrm{open},i}}\in \mathcal{D}^{\prime}_{\chi}(G^k_{\mathrm{open},i})$ the restriction of $T$ to the open $H$-invariant submanifold $G^k_{\mathrm{open},i}$ with $k=2,5$ and $i=1,2$. 

We first prove the following vanishing property. 
\begin{prop}\label{0=TG3}
    If $T \in \D^{\prime}(G)_{\widetilde{\chi}}$ is $\mathfrak{z}$-finite as in Theorem \ref{prop 3.2}, then 
    the following vanishing properties:
    \begin{align}\label{0=TG2}
        T|_{G^2_{\mathrm{open},1}}=0\quad {\rm and}\quad T |_{G^2_{\mathrm{open},2}}=0
    \end{align}
hold, which implies that 
$T |_{G^3_{\mathrm{open}}}=T |_{G^2_{\mathrm{open},1}\cup G^2_{\mathrm{open},2}}=0.$
\end{prop}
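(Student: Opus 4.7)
The plan is to reduce the statement to showing the two separate vanishings $T|_{G^2_{\mathrm{open},1}} = 0$ and $T|_{G^2_{\mathrm{open},2}} = 0$. Since $G^3_{\mathrm{open}} = G^2_{\mathrm{open},1} \cup G^2_{\mathrm{open},2}$ is an open cover and distributions form a sheaf on $l$-spaces by the exactness of the restriction map \eqref{2.2}, the vanishing on each piece forces $T|_{G^3_{\mathrm{open}}} = 0$. The two open submanifolds will be handled in parallel, being paired by the involutive symmetry announced in Section \ref{introduction}.

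For each $M \in \{G^2_{\mathrm{open},1}, G^2_{\mathrm{open},2}\}$, I would proceed in two stages. Stage one is an orbit-theoretic narrowing of the support: decompose $M$ into its two $P \times P^{\tau}$-strata (the common open stratum $G_R$ with $R = \begin{pmatrix} 4 & 2 \\ 2 & 2 \end{pmatrix}$, and the closed stratum distinguishing the two), further refine each stratum into $\widetilde{H}$-orbits, and then apply the modular-character criterion of Theorem \ref{thm 2.1} together with the unipotent $\chi$-incompatibility of Remark \ref{rem .2} to the representatives. In favorable cases this kills the open stratum outright, so that $\mathrm{supp}(T|_M)$ is contained in the closed stratum $N \subsetneq M$; orbits on which the modular obstruction vanishes would be handled by descent to an $\widetilde{H}$-slice via Lemma \ref{lem 2.2}. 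Stage two is where the new wavefront set input enters. Having secured $\mathrm{supp}(T|_M) \subset N$, I would invoke Proposition \ref{prop 2.7}: at a convenient $x \in N$, compute the conormal space $\cn^M_{N,x} \subset T^*_x G \simeq \Fg^*$ in the block coordinates of \eqref{P222}, and exhibit a non-zero element of $\cn^M_{N,x}$ with non-trivial semisimple part, hence lying outside the nilpotent cone $\CN$. Combined with Theorems \ref{thm 2.15} and \ref{thm 2.12}, this forces $T|_M = 0$.

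Once the first vanishing is in hand, the second should follow by transport of structure. One expects an involution $\iota$ of $G$ (obtainable by composing $\tau$ with conjugation by a suitable Weyl element of $M_{2^3}$) that preserves $H$ and $\widetilde{\chi}$, respects the class of $\Fz$-finite distributions, and exchanges the two relevant closed strata $\begin{pmatrix} 3 & 2 \\ 2 & 2 \end{pmatrix}$ and $\begin{pmatrix} 4 & 2 \\ 2 & 1 \end{pmatrix}$ so that $\iota(G^2_{\mathrm{open},1}) = G^2_{\mathrm{open},2}$. The vanishing of $T|_{G^2_{\mathrm{open},2}}$ would then follow from that of $T|_{G^2_{\mathrm{open},1}}$ applied to the transported distribution $\iota^*T$.

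The main obstacle is stage two: concretely producing a non-nilpotent element of $\cn^M_{N,x}$ for an explicit $x \in N$. This requires a precise identification of $T_x N$ inside $T_x M$ in the block decomposition inherited from $P_{2^3}$, followed by a matrix calculation singling out an element of $\Fg^*$ that annihilates $T_x N$ and has non-zero semisimple part. A secondary difficulty is to arrange the orbit analysis in stage one so that the resulting submanifold $N$ actually supports such a non-nilpotent conormal vector; in particular, one must avoid the degenerate situation in which $\cn^M_{N,x}$ lies entirely in $\CN$, which would demand a further refinement of the descent before the wavefront argument becomes applicable.
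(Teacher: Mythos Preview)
Your two-stage strategy matches the paper's, and you correctly anticipate that the conormal argument only succeeds after substantial prior refinement of the support. But two ingredients are missing from your stage one, and without them the argument does not close.

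First, the open stratum with $R=\left(\begin{smallmatrix}4&2\\2&2\end{smallmatrix}\right)$ is \emph{not} killed by unipotent $\chi$-incompatibility or any modular obstruction: at $g=1_6$ the stabilizer character is trivial, so Theorem~\ref{thm 2.1} gives nothing. The paper descends via Lemma~\ref{lem 2.2} to the slice $M_2\cong\GL_2\times\GL_2$ under simultaneous conjugation and transpose, and then must invoke an external result (Proposition~4.5 of \cite{MR2637582}) to finish. Likewise, the diagonalizable part of the closed stratum descends to a $\GL_3$-problem that requires its own four-step filtration before it vanishes. Second, you omit Lemma~\ref{lem 2.7}. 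Several pieces in the residual filtration of $HZ_4$ have \emph{symmetric} orbit representatives, so $\tau$ fixes each $H$-orbit and any $(H,\chi)$-equivariant distribution there is automatically $\tau$-invariant, hence vanishes under $\widetilde{\chi}$; this is what kills them, not a modular or unipotent obstruction. Only after alternating unipotent-incompatibility steps with these Lemma~\ref{lem 2.7} steps does the support shrink to a single one-parameter family $H\mathfrak{Z}_{4,3}^{2,'}$ on which a non-nilpotent conormal vector can be exhibited explicitly---confirming your ``secondary difficulty'' is the real one.

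On your proposed involution $\iota$: the paper does acknowledge the symmetry pairing $G^2_{\mathrm{open},1}$ with $G^2_{\mathrm{open},2}$, but it does not construct or use $\iota$; it simply repeats the argument with $\check M_3$ and $W_4$ in place of $M_3$ and $Z_4$.
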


We will first prove $T|_{G^2_{\mathrm{open},1}}=0$ in Section \ref{ssec-G21} and $T |_{G^2_{\mathrm{open},2}}=0$ in Section \ref{ssec-G22}. As a consequence of the vanishing properties in \eqref{0=TG2}, by considering the support of the distribution $T|_{G^3_{\mathrm{open}}}$ and make use of the exact sequence in \eqref{2.4}, we obtain that 
$T |_{G^3_{\mathrm{open}}}=T |_{G^2_{\mathrm{open},1}\cup G^2_{\mathrm{open},2}}=0.$ Hence to prove Proposition \ref{0=TG3}, it is enough to prove the vanishing assertion in \eqref{0=TG2}, which will be done 
in Section \ref{sec-TG3}. 
Next we prove in Section \ref{sec-TG4} that
\begin{prop}\label{0=TG4}
    If $T \in \D^{\prime}(G)_{\widetilde{\chi}}$ is $\mathfrak{z}$-finite as in Theorem \ref{prop 3.2}, then the following vanishing 
    \begin{align}\label{G4-3}
    \mathcal{D}^{\prime}_{\widetilde{\chi}}(G^4_{\mathrm{open}} \setminus G^3_{\mathrm{open}})=0
\end{align}
holds, which implies that 
    $T |_{G^4_{\mathrm{open}}}=0.$
\end{prop}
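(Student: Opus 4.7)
The statement has two components, which I would address in reverse order. For the implication \emph{vanishing $\Rightarrow$ $T|_{G^4_{\mathrm{open}}}=0$}: note that $G^3_{\mathrm{open}} \subset G^4_{\mathrm{open}}$ is an open $\widetilde{H}$-invariant subset by construction, so $G^4_{\mathrm{open}} \setminus G^3_{\mathrm{open}}$ is a closed $\widetilde{H}$-invariant subset of $G^4_{\mathrm{open}}$. Proposition \ref{0=TG3} gives $T|_{G^3_{\mathrm{open}}}=0$, whence $T|_{G^4_{\mathrm{open}}}$ is supported on $G^4_{\mathrm{open}} \setminus G^3_{\mathrm{open}}$. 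The exact sequence \eqref{2.4} applied to the pair $(G^4_{\mathrm{open}}, G^3_{\mathrm{open}})$ then forces $T|_{G^4_{\mathrm{open}}}=0$ as soon as \eqref{G4-3} is established.

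For the vanishing \eqref{G4-3}, the essential observation is that $G^4_{\mathrm{open}} \setminus G^3_{\mathrm{open}}$ coincides with the single stratum $G_R$ for $R = \begin{pmatrix} 3 & 2 \\ 2 & 1 \end{pmatrix}$. This stratum is a single $P \times P^{\tau}$-double coset, hence $H$-invariant, and its rank matrix has equal off-diagonal entries, so $G_R$ is also stable under transpose and carries a natural $\widetilde{H}$-action. I would then adapt the strategy of \cite[Lemma 5.4]{MR2763736}: choose a normalized block-matrix representative $x_0 \in G_R$ dictated by the prescribed block ranks, construct a local $\widetilde{H}$-slice $\mathfrak{M}$ through $x_0$, and invoke Lemma \ref{lem 2.2} to reduce the vanishing on $G_R$ to that of $(\widetilde{H}_{\mathfrak{M}}, \widetilde{\chi}_{\mathfrak{M}})$-equivariant distributions on $\mathfrak{M}$.

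On the slice, the constructibility of the algebraic $\widetilde{H}_{\mathfrak{M}}$-action (Remark \ref{rem 2.3}(1)) combined with the Gelfand--Kazhdan Constructibility Theorem \ref{thm 2.2} further reduces the problem to showing vanishing on each individual $\widetilde{H}_{\mathfrak{M}}$-orbit, which by Theorem \ref{thm 2.1} amounts to checking that the restriction of $\widetilde{\chi}$ to each stabilizer fails to match the ratio of the relevant modular characters. In practice, as in the Archimedean treatment, this reduces to verifying that $\widetilde{\chi}$ is non-trivial on the unipotent radical of each stabilizer, i.e., unipotent $\chi$-incompatibility.

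The main obstacle is the explicit bookkeeping of orbit representatives, stabilizers and characters on $G_R$, but this is precisely the computation carried out in \cite[Lemma 5.4]{MR2763736}; since unipotent $\chi$-incompatibility retains its meaning and its consequences in the $p$-adic setting by Remark \ref{rem .2}, those computations transfer without essential change. Importantly, no wavefront-set input is required here: as emphasized in the introduction, the new $\mathfrak{z}$-finiteness/wavefront technique is reserved for the four strata $G^2_{\mathrm{open},i}$ and $G^5_{\mathrm{open},i}$, whereas $G_R = G^4_{\mathrm{open}} \setminus G^3_{\mathrm{open}}$ falls under the relatively standard part of the argument, which is why the $\mathfrak{z}$-finiteness hypothesis does not actually enter the vanishing \eqref{G4-3} itself but only in the subsequent deduction via Proposition \ref{0=TG3}.
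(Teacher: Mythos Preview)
Your deduction of $T|_{G^4_{\mathrm{open}}}=0$ from the vanishing \eqref{G4-3} via Proposition \ref{0=TG3} and the exact sequence is correct, as is the identification $G^4_{\mathrm{open}}\setminus G^3_{\mathrm{open}}=G_R$ with $R=\begin{pmatrix}3&2\\2&1\end{pmatrix}$ and the observation that no wavefront-set input is needed on this stratum. However, the heart of your argument has a genuine gap.

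You assert that on the slice one can reduce via Theorem \ref{thm 2.2} to individual orbits and then dispatch each one by unipotent $\chi$-incompatibility, citing \cite[Lemma 5.4]{MR2763736}. First, the reference is off: Lemma 5.4 of \cite{MR2763736} treats the stratum corresponding to $Z_4$, not this one; the relevant Archimedean computations for $G_R$ here are those of \cite[Section 6]{MR2763736}. More seriously, unipotent $\chi$-incompatibility does \emph{not} hold on every $H$-orbit inside this $G_R$. The paper builds an explicit six-step filtration $Z_6=Z_{6,0}\supset\cdots\supset Z_{6,6}=\emptyset$ and treats each difference $Z_{6,i}\setminus Z_{6,i+1}$ separately. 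Only the steps $i=0,2,4$ (and part of $i=1$) are handled by unipotent $\chi$-incompatibility. For $i=3$ and $i=5$ the orbits admit symmetric representatives, the restriction of $\chi$ to the $H$-stabilizer is trivial, and one must instead invoke Lemma \ref{lem 2.7} (with $h_0=1$) to conclude that any $(H,\chi)$-equivariant distribution on such an orbit is $\tau$-invariant, whence the $(\widetilde{H},\widetilde{\chi})$-equivariant ones vanish. For the remaining piece of $i=1$, a further $H$-slice reduces the problem to the $\widetilde{F^\times}$-action on $F\times F$ and one appeals to \cite[Lemma 4.6]{MR2637582}. Your sketch therefore omits two of the three mechanisms actually needed; without them the vanishing \eqref{G4-3} is not established.
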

From the vanishing property in \eqref{G4-3} and the exact sequence in \eqref{2.4}, we obtain that 
$T |_{G^4_{\mathrm{open}}}=0.$ Hence it is enough to prove the vanishing assertion in \eqref{G4-3}, which will be done in Section \ref{sec-TG4}. 
With Propositions \ref{0=TG3} and \ref{0=TG4}, we prove in Section \ref{sec-TG6} that 
\begin{prop}\label{0=TG6}
    If $T \in \D^{\prime}(G)_{\widetilde{\chi}}$ is $\mathfrak{z}$-finite as in Theorem \ref{prop 3.2}, then 
    the following vanishing properties:
    \begin{align}\label{0=TG5}
        T|_{G^5_{\mathrm{open},1}}=0\quad {\rm and}\quad T |_{G^5_{\mathrm{open},2}}=0
    \end{align}
hold, which implies that 
$T |_{G^6_{\mathrm{open}}}=T |_{G^5_{\mathrm{open},1}\cup G^5_{\mathrm{open},2}}=0.$
\end{prop}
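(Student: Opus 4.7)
The plan is to use Proposition \ref{0=TG4} to confine the supports of the two restrictions to the new strata, exploit the transpose symmetry to reduce to a single case, and then apply the wavefront set criterion of Proposition \ref{prop 2.7} to finish. Because $G^4_{\mathrm{open}}$ is open in each $G^5_{\mathrm{open},i}$, the exact sequence \eqref{2.4} combined with Proposition \ref{0=TG4} shows that $T|_{G^5_{\mathrm{open},1}}$ is supported on the closed stratum $G^5_{\mathrm{open},1}\setminus G^4_{\mathrm{open}}=G_{R_1}$ with $R_1=\begin{pmatrix}3 & 1\\ 2 & 1\end{pmatrix}$, and $T|_{G^5_{\mathrm{open},2}}$ is supported on $G^5_{\mathrm{open},2}\setminus G^4_{\mathrm{open}}=G_{R_2}$ with $R_2=\begin{pmatrix}3 & 2\\ 1 & 1\end{pmatrix}$. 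Once both restrictions are shown to vanish, one more application of \eqref{2.4} combined with $T|_{G^6_{\mathrm{open}}}=T|_{G^5_{\mathrm{open},1}\cup G^5_{\mathrm{open},2}}$ yields $T|_{G^6_{\mathrm{open}}}=0$.

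Next, the transpose $\tau$ swaps the ranks of the lower-right $4\times 2$ and $2\times 4$ blocks, so $R_2=R_1^{\tau}$, and hence $\tau$ carries $G_{R_1}$ onto $G_{R_2}$ and $G^5_{\mathrm{open},1}$ onto $G^5_{\mathrm{open},2}$. From $T\in \mathcal{D}^{\prime}(G)_{\widetilde{\chi}}$ with $\widetilde{\chi}(\tau)=-1$ we deduce $\tau(T)=-T$, and $\Fz$-finiteness is manifestly preserved by $\tau$. Therefore, by transporting through $\tau$, it suffices to treat one of the two strata, say to establish the vanishing of $T|_{G^5_{\mathrm{open},1}}$; the vanishing of $T|_{G^5_{\mathrm{open},2}}$ then follows by applying $\tau$ to the conclusion.

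To prove the remaining vanishing, I plan to invoke Proposition \ref{prop 2.7} with $M=G^5_{\mathrm{open},1}$ and $N=G_{R_1}$. The strategy is as follows: select a convenient representative $g_0\in G_{R_1}$ whose rank-matrix structure is transparent; identify $T_{g_0}G$ with $\Fg=\gl_6(F)$ via right translation; compute the conormal fiber $\cn^{M}_{N,g_0}\subset \Fg^{*}$ by linearizing the rank conditions cutting out $G^5_{\mathrm{open},1}$ and the further vanishing minors cutting out $G_{R_1}$ at $g_0$; and finally exhibit a nonzero non-nilpotent element inside that conormal fiber. Given such an element, Proposition \ref{prop 2.7} forces $T|_{G^5_{\mathrm{open},1}}=0$ because the wavefront set of a $\Fz$-finite distribution is contained in the nilpotent cone (Theorem \ref{thm 2.15}) while being invariant under conormal shifts on its support (Theorem \ref{thm 2.12}).

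The main obstacle I expect lies in this last step: choosing $g_0$ cleverly and carrying out the explicit conormal computation in $\gl_6(F)$. One wants a $g_0$ symmetric enough that the five rank inequalities defining $G^5_{\mathrm{open},1}$ admit a clean linearization, while the extra vanishing minors imposed by $R_1$ single out a concretely describable conormal subspace. Once the conormal space is pinned down, producing a semisimple element inside it, for instance by exhibiting a representative with a nonzero trace of some power, is a finite-dimensional linear-algebra check; this parallel to the treatment of the pair $(G^2_{\mathrm{open},1},G^2_{\mathrm{open},2})$ in Section \ref{sec-TG3} is what Section \ref{sec-TG6} will execute in detail.
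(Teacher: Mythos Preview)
Your overall structure is right, and the transpose symmetry reducing $G^5_{\mathrm{open},2}$ to $G^5_{\mathrm{open},1}$ is exactly how the paper handles it. However, the core analytic step contains a genuine gap: you propose to apply Proposition~\ref{prop 2.7} with $N=G_{R_1}$, but this cannot succeed. The stratum $G_{R_1}$ is a full $P$-$P^{\tau}$ double coset, so under the right-translation identification its tangent space at any $g_0$ is $g_0^{-1}\Lie(P)g_0+\Lie(P)^{\tau}$. The annihilator of $\Lie(P)^{\tau}$ under the trace form already consists of strictly block triangular matrices for the $(2,2,2)$ partition. Hence $\cn^{G}_{G_{R_1},g_0}\subset\mathcal{N}$ automatically, and Proposition~\ref{prop 2.7} yields nothing: you will never find a non-nilpotent conormal vector to the full double coset.

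What the paper actually does (and what the parallel with Section~\ref{sec-TG3} really requires) is a preliminary reduction \emph{inside} $G_{R_1}$ before invoking the wavefront criterion. Concretely, one writes down an $H$-slice $\mathfrak{G}_R$ of $G_{R_1}$ and a proper relatively $H$-stable closed submanifold $\mathfrak{G}'_R\subsetneq\mathfrak{G}_R$, and shows via unipotent $\chi$-incompatibility (Theorem~\ref{thm 2.1}, Remark~\ref{rem .2}) that $\mathcal{D}'_{\chi}\bigl(H(\mathfrak{G}_R\setminus\mathfrak{G}'_R)\bigr)=0$. Only then is $T|_{G^5_{\mathrm{open},1}}$ supported on the smaller submanifold $H\mathfrak{G}'_R$, which has strictly larger conormal space, and it is there that a non-nilpotent conormal vector (the $x'$ with $x'x$ semisimple of rank $2$) can be exhibited. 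This intermediate $\chi$-incompatibility step is not optional bookkeeping; without it the wavefront argument is vacuous.
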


Finally, we prove in Section \ref{sec-G6} that 
\begin{prop}\label{0=TG-6}
    With the notations as introduced above, the following vanishing 
    \[
    \mathcal{D}^{\prime}_{\widetilde{\chi}}(G \setminus G^6_{\mathrm{open}})=0
\]
holds.
\end{prop}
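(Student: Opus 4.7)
The plan is to write $G \setminus G^6_{\mathrm{open}}$ as the disjoint union of the fifteen $P \times P^{\tau}$-orbits $G_R$ corresponding to the rank-matrices not appearing in \eqref{G6op}. Each $G_R$ is $H$-stable, and the closure relations among them give a filtration of $G \setminus G^6_{\mathrm{open}}$ by closed subsets whose successive differences are the individual $G_R$. Iteratively applying Lemma \ref{lem 2.1} along this filtration reduces Proposition \ref{0=TG-6} to proving $\mathcal{D}^{\prime}_{\widetilde{\chi}}(G_R) = 0$ for every such $R$.

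For each stratum, I would fix an explicit representative $g_R$ and compute its stabilizer $H_{g_R}$ under the twisted action $(s_1, s_2) \cdot x = s_1 x s_2^{\tau}$. The primary tool will be the $p$-adic unipotent $\chi$-incompatibility recorded in Remark \ref{rem .2}: whenever one can exhibit a unipotent subgroup of $H_{g_R}$ on which $\chi = \chi_S \otimes \chi_S$ restricts non-trivially, the $(H, \chi)$-equivariant distributions on $G_R$ already vanish, and \emph{a fortiori} so do the $\widetilde{\chi}$-equivariant ones. These stabilizer computations are the precise $p$-adic analogues of \cite[Lemmas~5.4, 5.6, 6.2, 6.4, 6.6 and 8.4]{MR2763736}, and Remark \ref{rem .2} explicitly asserts that they transfer verbatim. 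When no convenient unipotent subgroup is available I would fall back on the sharper Theorem \ref{thm 2.1} and verify the modular-character mismatch $\chi_{H_{g_R}}^{-1} \neq \Delta_H|_{H_{g_R}} \cdot \Delta_{H_{g_R}}^{-1}$ directly from the Levi structure of $H_{g_R}$.

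For the handful of strata which escape both the unipotent and the modular-character obstruction for the pair $(H, \chi)$, the approach is to exploit the extra sign carried by the $\widetilde{H}$-action: by Lemma \ref{lem 2.7}, it suffices to produce an element $h_0 \in H$ with $h_0 g_R = g_R^{\tau}$ and $\chi(h_0) = 1$. Any $(H, \chi)$-equivariant distribution on $G_R$ is then forced to be $\tau$-invariant, while the defining property $\widetilde{\chi}(\tau) = -1$ imposes $\tau \cdot T = -T$, so $T|_{G_R} = 0$. The main obstacle is the combinatorial bookkeeping inherent in carrying out these case analyses uniformly for all fifteen strata and, in the residual cases, in producing the intertwining element $h_0$ explicitly \emph{inside} $H = S \times S$ (not merely in $G$) with the correct character value; these are the most delicate matrix computations. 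No wavefront-set input is required here, since every $G_R$ in $G \setminus G^6_{\mathrm{open}}$ is already of a sufficiently degenerate rank-profile that the stabilizer $H_{g_R}$ is large enough for the criteria of Theorem \ref{thm 2.1}, Remark \ref{rem .2} and Lemma \ref{lem 2.7} to be decisive, so the execution parallels \cite{MR2763736} with the $p$-adic criteria of Section \ref{section 2} replacing the Archimedean techniques of transversality and metrical properness.
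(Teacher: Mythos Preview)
Your plan matches the paper's proof in Section~\ref{sec-G6}: the fifteen $G_R$ are handled one at a time, with unipotent $\chi$-incompatibility (Remark~\ref{rem .2}) disposing of a generic sublocus and Lemma~\ref{lem 2.7} handling the residual $\tau$-symmetric orbits; the modular-character fallback of Theorem~\ref{thm 2.1} is never actually invoked, and no wavefront input is used. One refinement worth making precise: each $G_R$ is a single $P\times P^{\tau}$-orbit but not a single $H$-orbit, so rather than fixing one representative $g_R$ the paper parameterizes an $H$-slice $\mathfrak{G}_R$ by matrices $x,y,z,w\in\GL_2(F)$ and then splits it as $(\mathfrak{G}_R\setminus\mathfrak{G}_R')\sqcup\mathfrak{G}_R'$, with the unipotent obstruction applied uniformly on the first piece and the explicit $\tau$-intertwiner $h_0$ produced on the second.
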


Theorem \ref{prop 3.2} can finally be proved as follows. If $T \in \D^{\prime}(G)_{\widetilde{\chi}}$ is $\mathfrak{z}$-finite as in Theorem \ref{prop 3.2}, by applying the exact sequence in \eqref{2.4} to 
the situation that $G^6_{\mathrm{open}}\subset G$ is open, and by the property that $\mathcal{D}^{\prime}_{\widetilde{\chi}}(G \setminus G^6_{\mathrm{open}})=0$ in Proposition \ref{0=TG-6}, 
in order to show that $T=0$ as a distribution on $G$, it suffices to show that $T |_{G^6_{\mathrm{open}}}=0$, which follows from Proposition \ref{0=TG6}. We are done.

\section{Proof of Proposition \ref{0=TG3}}\label{sec-TG3}

We prove Proposition \ref{0=TG3}, namely the following vanishing properties:
    \begin{align}\label{0=TG2-12}
        T|_{G^2_{\mathrm{open},1}}=0\quad {\rm and}\quad T |_{G^2_{\mathrm{open},2}}=0
    \end{align}
for any $T \in \D^{\prime}(G)_{\widetilde{\chi}}$, which is $\mathfrak{z}$-finite as in Theorem \ref{prop 3.2}. 

\subsection{The case of $G^2_{\mathrm{open},1}$}\label{ssec-G21}
In order to study the open subset $G^2_{\mathrm{open},1}$ of $G$, we introduce the following stratification. 
It is straightforward to check that 
\begin{align*}
    G^2_{\mathrm{open},1} \subset H(\GL_4(F)\times \GL_2(F))=H(\GL_4(F)\times \{1_2\}).
\end{align*}
The classification of the lower $2 \times 2$ block of a matrix in $\GL_4(F)$ gives the following decomposition
\begin{align*}
    G^2_{\mathrm{open},1}=&\left(H\left\{ \left(\begin{smallmatrix}
        \left(\begin{smallmatrix}
        a_{11} & a_{12}\\
        a_{21} & a_{22}
    \end{smallmatrix}\right) & 0\\
    0 & 1_2
    \end{smallmatrix}\right) \colon a_{22} \; \mathrm{invertible}\right\} 
    \cup H\left\{ \left(\begin{smallmatrix}
        \left(\begin{smallmatrix}
        a_{11} & a_{12}\\
        a_{21} & a_{22}
    \end{smallmatrix}\right) & 0\\
    0 & 1_2
    \end{smallmatrix}\right)
     \colon a_{22}\neq0 \; \mathrm{diagonalizable}\right\}\right)\\&
    \sqcup H\left\{ \left(\begin{smallmatrix}
        \left(\begin{smallmatrix}
        a_{11} & a_{12}\\
        a_{21} & a_{22}
    \end{smallmatrix}\right) & 0\\
    0 & 1_2
    \end{smallmatrix}\right)
     \colon a_{22}\neq0 \; \mathrm{nilpotent}\right\},
\end{align*}
where each block is a $2\times 2$ matrix.

Note that
\begin{align*}
    H\left\{ \left(\begin{smallmatrix}
        \left(\begin{smallmatrix}
        a_{11} & a_{12}\\
        a_{21} & a_{22}
    \end{smallmatrix}\right) & 0\\
    0 & 1_2
    \end{smallmatrix}\right) \colon a_{22} \; \mathrm{invertible}\right\}=HM_2,
\end{align*}
where
$M_2=\GL_2(F)\times\GL_2(F)\times\{1_2\}\subset G$. 
And
\begin{align*}
    H\left\{ \left(\begin{smallmatrix}
        \left(\begin{smallmatrix}
        a_{11} & a_{12}\\
        a_{21} & a_{22}
    \end{smallmatrix}\right) & 0\\
    0 & 1_2
    \end{smallmatrix}\right) \colon a_{22}\neq0 \; \mathrm{diagonalizable}\right\}=HM_3,
\end{align*}
where $M_3=\GL_3(F)\times F^{\times}\times\{1_2\}\subset G$. 
Also notice that
\begin{align*}
    H\left\{ \left(\begin{smallmatrix}
        \left(\begin{smallmatrix}
        a_{11} & a_{12}\\
        a_{21} & a_{22}
    \end{smallmatrix}\right) & 0\\
    0 & 1_2
    \end{smallmatrix}\right) \colon a_{22}\neq0 \; \mathrm{nilpotent}\right\}=HZ_4,
\end{align*}
where
\begin{align*}
    Z_4=\left\{  \begin{pmatrix}
        a_{11} & a_{12} & 0 \\a_{21} & a_{22} & 0 \\ 0 & 0 & y
    \end{pmatrix}   \in \GL_4(F)\times\GL_2(F)\subset G\colon (y^{-1}a_{22})^2=0, a_{22}\neq 0 \right\}.
\end{align*}
Then we obtain the following expression for $G^2_{\mathrm{open},1}$:
\begin{align}\label{G2op1-1}
    G^2_{\mathrm{open},1}=(HM_2 \cup HM_3) \sqcup HZ_4,
\end{align}
where $HZ_4$ is clearly a closed subset in $G^2_{\mathrm{open},1}$. This forms the main steps to prove that  $T|_{G^2_{\mathrm{open},1}}=0$ for any $T \in \D^{\prime}(G)_{\widetilde{\chi}}$, which is $\mathfrak{z}$-finite as in Theorem \ref{prop 3.2}.

The idea to prove that $T|_{G^2_{\mathrm{open},1}}=0$ can be outlined as follows. We will prove in Section \ref{section 4.1} that 
$\D^{\prime}_{\widetilde{\chi}}(HM_2)=0$ and in Section \ref{section 4.2} that 
$\D^{\prime}_{\widetilde{\chi}}(HM_3)=0$. 
Applying Lemma \ref{lem 2.1} to the case $HM_2 \cup HM_3$, where $HM_2$ is an open $\widetilde{H}$-invariant submanifold, we obtain that 
\begin{align*}
    \mathcal{D}^{\prime}_{\widetilde{\chi}}(HM_2 \cup HM_3)=0.
\end{align*}
This implies that the distribution $T|_{G^2_{\mathrm{open},1}}$ must vanish on $HM_2 \cup HM_3$.
On the other hand, by Theorem \ref{thm 2.15}, the $\mathfrak{z}$-finiteness implies that the wavefront set of $T$ has the property that $\wf_x(T)\subset\mathcal{N}$, for all $x\in G$, 
where $\mathcal{N}$ is the nilpotent cone.
By Proposition \ref{prop 2.14}, the wavefront set is local and we have
\begin{align}\label{WFG21}
    \wf_x(T|_{G^2_{\mathrm{open},1}})\subset\mathcal{N}, \; \mathrm{for} \; \mathrm{all} \; x\in G^2_{\mathrm{open},1},
\end{align}
since $G^2_{\mathrm{open},1}$ is open in $G$.
In Section \ref{section 4.4}, we will prove that there is no non-zero distribution satisfying the above two conditions. We hence conclude that 
$T |_{G^2_{\mathrm{open},1}}=0.$

\subsubsection{The case of $M_2$} \label{section 4.1}
We prove here that 
\begin{align}\label{HM2}
    \mathcal{D}^{\prime}_{\widetilde{\chi}}(HM_2)=0.
\end{align}
Define 
\begin{align*}
    H_2:=\{ (x,x^{-\tau}) \colon x\in \GL_2^{\Delta}(F)   \}\subset  H=S\times S.
\end{align*}
and the extended group 
\begin{align*}
    \widetilde{H}_2:=\{1,\tau\}\ltimes H_2 \subset \widetilde{H},
\end{align*}
Denote the restriction of $\widetilde{\chi}$ to $\widetilde{H}_2$ by $\widetilde{\chi}_2$. 
After the identification of $H_2$ with $\GL_2(F)$, the semidirect product $\widetilde{H}_2:=\{1,\tau\}\ltimes H_2$ is given with the action $\tau(g)=g^{-\tau}$, 
and the character $\widetilde{\chi}_2$ is given by 
\begin{align*}
    \widetilde{\chi}_2|_{\GL_2(F)}=1 \quad\mathrm{and}\quad \widetilde{\chi}_2(\tau)=-1.
\end{align*}
Recall from Section \ref{ssec-SG} that 
\begin{align*}
    M_2=\GL_2(F)\times\GL_2(F)\times\{1_2\}\subset G.
\end{align*}
We may identify $M_2$ with $\GL_2(F) \times \GL_2(F)$. The action of $\widetilde{H}_2$ on $M_2$ is given by 
\begin{align*}
    g(x,y)=(gxg^{-1},gyg^{-1}),\quad g\in\GL_2(F),\quad {\rm and}\quad \tau(x,y)=(x^{\tau},y^{\tau}).
\end{align*}
The assertion in \eqref{HM2} follows from Lemma \ref{lem 2.2} and the following proposition.

\begin{prop}
With the notation as given above, the following holds 
\begin{align*}
    \mathcal{D}^{'}_{\widetilde{\chi}_2}(M_2)=0.
\end{align*}
\end{prop}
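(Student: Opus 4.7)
My plan is to apply Gelfand--Kazhdan Constructibility (Theorem~\ref{thm 2.2}) after stratifying $M_2 = \GL_2(F) \times \GL_2(F)$ into three $\widetilde{H}_2$-invariant locally closed strata, and on each stratum reducing to the character criterion of Theorem~\ref{thm 2.1}. The fundamental observation is $\widetilde{\chi}_2(\tau) = -1$: any orbit whose $\widetilde{H}_2$-stabilizer contains an element of the form $g_0\tau$ (some $g_0\in\GL_2(F)$) automatically carries no $(\widetilde{H}_2,\widetilde{\chi}_2)$-equivariant distribution.

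I would stratify $M_2$ into the open locus $U$ of \emph{absolutely irreducible pairs}---those $(x,y)$ generating $M_2(F)$ as an $F$-algebra; the closed stratum $Y_{\mathrm{ss}}$ of \emph{simultaneously diagonalizable pairs}; and the remaining stratum $Y_{\mathrm{ns}}$ of pairs simultaneously triangularizable but not diagonalizable. On $U$, the Sibirskii invariant theorem for a pair of $2\times 2$ matrices implies that the five invariants $\Tr(x),\det(x),\Tr(y),\det(y),\Tr(xy)$ separate $\GL_2(F)$-orbits, and each is $\tau$-invariant (since $\Tr(x^\tau y^\tau) = \Tr((yx)^\tau) = \Tr(xy)$); hence for every $(x_0,y_0)\in U$ there exists $g_0\in\GL_2(F)$ with $g_0(x_0,y_0)g_0^{-1} = (x_0^\tau,y_0^\tau)$, so $g_0\tau$ stabilizes $(x_0,y_0)$ with $\widetilde{\chi}_2(g_0\tau) = -1$. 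Theorems~\ref{thm 2.1} and~\ref{thm 2.2} together give $\mathcal{D}^{\prime}_{\widetilde{\chi}_2}(U) = 0$. On $Y_{\mathrm{ss}}$, every pair is $\GL_2$-conjugate to a pair of diagonal matrices, pointwise fixed by $\tau$; the stabilizer contains $\tau$, and $\widetilde{\chi}_2(\tau) = -1$ immediately yields $\mathcal{D}^{\prime}_{\widetilde{\chi}_2}(Y_{\mathrm{ss}}) = 0$.

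\emph{The main obstacle} is the stratum $Y_{\mathrm{ns}}$, where individual $\GL_2$-orbits are no longer $\tau$-invariant---for example, the orbit of $(\mathrm{diag}(a,b),\bigl(\begin{smallmatrix}1&1\\0&1\end{smallmatrix}\bigr))$ with $a\neq b$ is distinct from its transpose, so the argument of the previous paragraph fails. My approach is to parameterize $Y_{\mathrm{ns}}$ by the common invariant line $\ell\in\mathbb{P}^1(F)$; since $\GL_2(F)$ acts transitively on $\mathbb{P}^1(F)$ with upper-Borel stabilizer $B$, the slice lemma (Lemma~\ref{lem 2.2}) reduces the problem to the slice $Y_{\mathrm{ns},\ell_0}$ of upper-triangular pairs with at least one nonzero off-diagonal entry, acted on by the extended stabilizer $\widetilde{B} := \Stab_{\widetilde{H}_2}(\ell_0)$. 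Although $\tau$ sends $\ell_0 = Fe_1$ to the opposite line, the Weyl-twisted element $w\tau$ with $w = \bigl(\begin{smallmatrix}0&1\\1&0\end{smallmatrix}\bigr)$ preserves $\ell_0$ and lies in $\widetilde{B}$, still satisfying $\widetilde{\chi}_2(w\tau) = -1$.

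Within the slice I would apply Bernstein Localization (Corollary~\ref{cor 2.9}) via the $B$-invariant diagonal-entry map $(x,y)\mapsto(a,c,p,r)$, on which $w\tau$ acts by the Weyl swap $(a,c,p,r)\mapsto(c,a,r,p)$. On a $w\tau$-fixed fiber ($a=c$ and $p=r$), the element $w\tau$ acts trivially on the off-diagonal coordinates, so $\widetilde{\chi}_2(w\tau) = -1$ already forces vanishing via Theorem~\ref{thm 2.1}. On a $w\tau$-swapped pair of fibers, $(\widetilde{B},\widetilde{\chi}_2)$-equivariance pins the distribution down to a $B$-equivariant distribution on one fiber $F^2\setminus\{(0,0)\}$ parameterized by the off-diagonal entries $(b,q)$, on which $B$ acts affinely by $(b,q)\mapsto((sb+u(c-a))/t,\,(sq+u(r-p))/t)$. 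Solving the invariance equations produces the unique (up to scalar) invariant density $|(r-p)b - (c-a)q|^{-2}$ on the open $B$-orbit, whose singularity along the $B$-stable line $L := \{(r-p)b = (c-a)q\}$ is not locally integrable; combined with $\mathcal{D}^{\prime}_B(L\setminus\{(0,0)\}) = 0$ (from Theorem~\ref{thm 2.1}, since the modular character $\Delta_B$ restricted to the Borel stabilizer of a point of $L$ is non-trivial), the exact sequence \eqref{2.4} forces $\mathcal{D}^{\prime}_B(F^2\setminus\{(0,0)\}) = 0$. Assembling the vanishing on all three strata via Lemma~\ref{lem 2.1} yields $\mathcal{D}^{\prime}_{\widetilde{\chi}_2}(M_2) = 0$.
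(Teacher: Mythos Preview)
Your approach is genuinely different from the paper's and has a real gap in the $Y_{\mathrm{ns}}$ stratum.

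First, the paper's proof is a two-line reduction: by \cite[Proposition~4.5]{MR2637582}, any $\GL_2(F)$-conjugation-invariant distribution on $\GL_2(F)\times\GL_2(F)$ is automatically invariant under the adjugate involution $(x,y)\mapsto(\bar x,\bar y)$ with $\bar x=\Tr(x)-x$; since $\omega\bar x\omega^{-1}=x^\tau$ for $\omega=\bigl(\begin{smallmatrix}0&1\\-1&0\end{smallmatrix}\bigr)$, such a distribution is also $\tau$-invariant, and $\widetilde\chi_2(\tau)=-1$ finishes. So the paper never stratifies $M_2$ at all.

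Now for the gap in your argument. Your description of the slice $Y_{\mathrm{ns},\ell_0}$ as ``upper-triangular pairs with at least one nonzero off-diagonal entry'' is incorrect: the pair $\bigl(\bigl(\begin{smallmatrix}1&1\\0&2\end{smallmatrix}\bigr),I\bigr)$ has a nonzero off-diagonal entry yet is simultaneously diagonalizable. The correct slice consists of upper-triangular pairs that are \emph{not} simultaneously diagonalizable. Concretely, writing $x=\bigl(\begin{smallmatrix}a&b\\0&c\end{smallmatrix}\bigr)$, $y=\bigl(\begin{smallmatrix}p&q\\0&r\end{smallmatrix}\bigr)$ with $(c-a,r-p)\neq(0,0)$, the pair is simultaneously diagonalizable exactly when $\xi:=(r-p)b-(c-a)q=0$. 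Hence over a $w\tau$-swapped base point the fiber is $F^2\setminus L$, not $F^2\setminus\{0\}$.

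This matters for two reasons. First, $F^2\setminus\{0\}$ is not even $B$-stable: the affine $B$-action sends any nonzero point of $L$ to $0$ (solve $s\lambda+u=0$ with $s\in F^\times$), so your line ``$\mathcal D'_B(L\setminus\{(0,0)\})=0$'' is about a set on which $B$ does not act. Second, and fatally, the correct fiber $F^2\setminus L$ is a single $B$-orbit whose stabilizer is the center $F^\times\subset B$, on which $\Delta_B$ is trivial; so by the criterion of Theorem~\ref{thm 2.1} there \emph{is} a nonzero $B$-invariant distribution on it---precisely the measure $|\xi|^{-2}\,db\,dq$, which is perfectly well-defined on $\{\xi\neq 0\}$ (your non-integrability observation only obstructs its extension across $L$). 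Consequently, on each $w\tau$-swapped pair of fibers one can build a nonzero $(\widetilde B,\widetilde\chi_2)$-equivariant distribution, and your Bernstein-localization argument does not yield vanishing on $Y_{\mathrm{ns}}$. (A smaller issue: your three strata also omit pairs lying in an anisotropic torus $E^\times\subset\GL_2(F)$, though that case is easily handled by the same adjugate trick the paper uses.)
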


\begin{proof}
    Using Proposition 4.5 in \cite{MR2637582} we know such a distribution is invariant under the involution $(x,y)\mapsto(\overline{x},\overline{y})$, where
    $\overline{x}=\Tr(x)-x$ for $x\in\GL_2(F)$. 
Write $\omega=\begin{pmatrix}
        0 & 1 \\ -1 & 0
    \end{pmatrix}$. It is clear that  $\omega \overline{x} \omega ^{-1}=x^{\tau}$. 
It is an easy computation that such a distribution $T$ is also invariant under the action $\tau$, which implies that $T=0$.
\end{proof}

\subsubsection{The case of $M_3$}\label{section 4.2}

We prove here that 
\begin{align}\label{HM3}
    \mathcal{D}^{'}_{\widetilde{\chi}}(HM_3)=0.
\end{align}
We define a subgroup $H_3 \subset H$ that consists of elements of the form
\begin{align*}
    \left( \left(\begin{smallmatrix}
        a & 0 & * &0 & 0 & 0 \\ 0 & 1 & * & 0 & 0 & 0 \\ 0 & 0 & a & 0 & 0 & 0 \\ 0 & 0 & 0 & 1 & 0 & 0 \\0 & 0 & 0 &0 &a& 0\\ 0& 0 &0 &0 & 0 &1
    \end{smallmatrix}\right) ,\left(\begin{smallmatrix}
        a^{-1} & 0 & * &0 & 0 & 0 \\ 0 & 1 & * & 0 & 0 & 0 \\ 0 & 0 & a^{-1} & 0 & 0 & 0 \\ 0 & 0 & 0 & 1 & 0 & 0 \\0 & 0 & 0 &0 &a^{-1}& 0\\ 0& 0 &0 &0 & 0 &1
    \end{smallmatrix}  \right)              \right)
\end{align*}
and the extended group 
\begin{align*}
    \widetilde{H}_3:=\{1,\tau\}\ltimes H_3 \subset \widetilde{H},
\end{align*}
Denote the restriction of $\widetilde{\chi}$ to $\widetilde{H}_3$ by $\widetilde{\chi}_3$.
Recall from Section \ref{ssec-SG} that 
\begin{align*}
    M_3=\GL_3(F)\times F^{\times}\times\{1_2\}\subset G.
\end{align*}
To understand the action of $\widetilde{H}_3$ on $M_3$, we re-write the groups $H_3$ and $\widetilde{H}_3$ as follows. 

Write
\begin{align*}
    L_3=\left\{ \begin{pmatrix}
        a & 0 & 0 \\ 0 & 1 & 0 \\ 0 & 0 & a 
    \end{pmatrix}      \;|\;a\in F^{\times}  \right\}\quad {\rm and}\quad N_3=\left\{ \begin{pmatrix}
        1 & 0 & d \\ 0 & 1 & c \\ 0 & 0 & 1 
    \end{pmatrix}      \;|\;c,d\in F\right\}.
\end{align*}
We obtain the isomorphism
\begin{align*}
    H_3\cong L_3\ltimes(N_3\times N_3),
\end{align*}
where the semidirect product is defined by the action
\begin{align*}
    l(g_1,g_2)=(lg_1l^{-1},l^{-1}g_2l).
\end{align*}
Under this identification, the semidirect product $\widetilde{H}_3=\{1,\tau\}\ltimes H_3$ is given by the action
\begin{align*}
    \tau(l,g_1,g_2)=(l^{-1},g_2,g_1), 
\end{align*}
and the character 
$\widetilde{\chi}_3$ is given by
\begin{align*}
    \widetilde{\chi}_3(l,g_1,g_2)=\chi_{N_3}(g_1)\chi_{N_3}(g_2),\quad(l,g_1,g_2)\in H_3, \quad {\rm and}\quad \widetilde{\chi}_3(\tau)=-1,
\end{align*}
where
\begin{align*}
    \chi_{N_3} \begin{pmatrix}
        1 & 0 & d \\ 0 & 1 & c \\ 0 & 0 & 1 
    \end{pmatrix}:=\psi_{F}(d).
\end{align*}
After the identification of $M_3$ with  $\GL_3(F)\times F^{\times}$, the action of $\widetilde{H}_3$ on $M_3$ is given by 
\begin{align*}
        (l,g_1,g_2)(x,y)=(lg_1xg_2^{\tau}l^{-1},y)\quad {\rm and}\quad \tau(x,y)=(x^{\tau},y).
    \end{align*}
Now the assertion in \eqref{HM3} follows from Lemma \ref{lem 2.2} and the following proposition.

\begin{prop}\label{prop:M3}
With the notations as above, the following holds
\begin{align*}
    \mathcal{D}^{'}_{\widetilde{\chi}_3}(M_3)=0.
\end{align*}
\end{prop}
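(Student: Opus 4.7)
The plan is to reduce $\mathcal{D}'_{\widetilde{\chi}_3}(M_3)=0$ through two successive simplifications---eliminating the $F^\times$-factor and slicing out the $N_3\times N_3$-directions---to a question about distributions on $\GL_2(F)$, and then dispose of the reduced problem using Lemma~\ref{lem 2.7} together with the sign $\widetilde{\chi}_3(\tau)=-1$.

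First I would apply Lemma~\ref{lem-prod}: since $\widetilde{H}_3$ acts trivially on the $F^\times$-factor of $M_3$ and $\widetilde{\chi}_3$ does not depend on that factor, it suffices to show $\mathcal{D}'_{\widetilde{\chi}_3}(\GL_3(F))=0$ for the action $(l,g_1,g_2)\cdot x = lg_1xg_2^\tau l^{-1}$ and $\tau\cdot x = x^\tau$. Then I would stratify $\GL_3(F)$ by the $\widetilde{H}_3$-invariant function $x_{33}$, combining the open stratum $\{x_{33}\neq 0\}$ and the closed stratum $\{x_{33}=0\}$ via Lemma~\ref{lem 2.1}. On the open stratum, every $H_3$-orbit meets the slice $\mathfrak{M}$ consisting of block-diagonal matrices with $2\times 2$ block $y\in\GL_2(F)$ and scalar block $x_{33}\in F^\times$; its stabilizer in $\widetilde{H}_3$ is $\{1,\tau\}\ltimes L_3$. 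By Lemma~\ref{lem 2.2}, together with one more application of Lemma~\ref{lem-prod} to drop $x_{33}$, it suffices to show that no nonzero distribution on $\GL_2(F)$ is simultaneously $L_3$-invariant under $\mathrm{diag}(a,1)$-conjugation and $\tau$-antiinvariant under transpose.

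On the open locus $\{y_{12}y_{21}\neq 0\}\subset\GL_2(F)$, the element $l\in L_3$ with parameter $a=y_{21}/y_{12}$ satisfies $l\cdot y = y^\tau$ and $\widetilde{\chi}_3(l)=1$; Lemma~\ref{lem 2.7} then forces $\tau T=T$, which combined with $\tau T=-T$ yields $T=0$ on this locus. The complementary triangular strata in $\GL_2(F)$, together with the closed stratum $\{x_{33}=0\}$ in $\GL_3(F)$, are handled by Theorem~\ref{thm 2.1}: in each degenerate case, one returns from the slice to the ambient $\GL_3(F)$ and exhibits a stabilizer element in $\widetilde{H}_3$ whose $N_3\times N_3$-component has nontrivial ``$d$''-coordinate, so that the character $\chi_{N_3}\otimes\chi_{N_3}=\psi_F(d_1+d_2)$ restricts nontrivially to the stabilizer.

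The main obstacle is precisely these boundary strata, where in the sliced picture the $L_3$-stabilizer is trivial while the $L_3$-orbits of $y$ and $y^\tau$ are distinct, so neither Theorem~\ref{thm 2.1} nor Lemma~\ref{lem 2.7} applies at the slice level. The resolution requires carefully tracking the additional stabilizer contributions coming from the $N_3\times N_3$-directions in the ambient $\widetilde{H}_3$, on which the character $\psi_F(d_1+d_2)$ is nontrivial. This step is the $p$-adic counterpart of the unipotent $\chi$-incompatibility appearing in the Archimedean proof of \cite{MR2763736} and noted in Remark~\ref{rem .2}.
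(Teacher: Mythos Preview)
Your first two reductions---dropping the trivial $F^\times$ factor and slicing the open set $\{x_{33}\neq 0\}$ down to $\GL_2(F)\times F^\times$ with stabilizer $\widetilde{H}_{3,1}=\{1,\tau\}\ltimes L_3$---are correct and agree with the paper. Your treatment of the open locus $\{y_{12}y_{21}\neq 0\}\subset\GL_2(F)$ via Lemma~\ref{lem 2.7} is also valid, though the paper does not argue this way.

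The gap is in the degenerate strata, and it is exactly where you flag the ``main obstacle'': your proposed resolution does not work. For a block-diagonal point $x=\mathrm{diag}(y,t)\in\GL_3(F)$ with $t\neq 0$, the equations $g_1 x g_2^\tau=l^{-1}xl$ force $d_1=c_1=d_2=c_2=0$ regardless of the shape of $y$, so the $N_3\times N_3$-component of the $H_3$-stabilizer is \emph{trivial}. There is no stabilizer element with nontrivial $d$-coordinate, and Theorem~\ref{thm 2.1} gives nothing. In fact each strictly-triangular $\widetilde{H}_{3,1}$-orbit in $\GL_2(F)$ is free and therefore carries a one-dimensional space of $(\widetilde{H}_{3,1},\widetilde{\chi}_{3,1})$-equivariant distributions, so orbit-by-orbit vanishing is simply false there. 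The same failure occurs deeper in the $\{x_{33}=0\}$ stratum: unipotent $\chi$-incompatibility only cuts down to the locus $x_{13}=x_{31}$, and on the remaining pieces (the paper's $Z_{3,2}\setminus Z_{3,3}$ and $Z_{3,3}$) the $N_3\times N_3$-stabilizers again have $d_1=d_2=0$, so $\psi_F(d_1+d_2)$ is trivial on them.

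The paper handles all of these residual pieces by an entirely different, non-orbitwise mechanism: after suitable slices (and, for the $\{x_{33}\neq 0\}$ piece, extension to $\gl_2(F)$ plus Bernstein localization along $\det$), everything is reduced to the statement $\mathcal{D}'_{\widetilde{\chi}_{3,1}}(F\times F)=0$ for the action $a\cdot(u,v)=(au,a^{-1}v)$, $\tau\cdot(u,v)=(v,u)$, which is \cite[Lemma~4.6]{MR2637582}. That global input on $F\times F$ is the missing ingredient; it cannot be replaced by stabilizer computations.
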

Since $M_3\cong\GL_3(F)\times F^{\times}$ and 
the action of $\widetilde{H}_3$ on $F^{\times}$ is trivial, to prove that $\mathcal{D}^{'}_{\widetilde{\chi}_3}(M_3)=0$ in Proposition~\ref{prop:M3}, it is enough to prove that 
\begin{align}\label{GL3}
    \mathcal{D}^{'}_{\widetilde{\chi}_3}(\GL_3(F))=0
\end{align}
due to Lemma \ref{lem-prod}.

In order to prove the assertion in \eqref{GL3}, we consider the following filtration of $\widetilde{H}_3$-stable closed submanifolds of $\GL_3(F)$:
\begin{align}\label{GL3filter}
    Z_{3,0}:=\GL_3(F) \supset Z_{3,1}  \supset Z_{3,2} \supset Z_{3,3} \supset Z_{3,4}:=\emptyset, 
\end{align}
where  
\begin{align*}
    Z_{3,1}= \left\{ \left(\begin{smallmatrix}
        * & * & *\\
        * & * & *\\
        * & * & 0
    \end{smallmatrix}\right) \in  \GL_3(F)   \right\}, \ 
    Z_{3,2}= \left\{ \left(\begin{smallmatrix}
        * & * & a\\
        * & * & *\\
        a & * & 0
    \end{smallmatrix}\right) \in  \GL_3(F)    \right\}, \ {\rm and}\ Z_{3,3}= \left\{ \left(\begin{smallmatrix}
        * & * & 0\\
        * & * & *\\
        0 & * & 0
    \end{smallmatrix}\right) \in  \GL_3(F)     \right\}.
\end{align*}
According to Lemma \ref{lem 2.1}, in order to prove the assertion in \eqref{GL3}, it suffices to prove that 
\begin{align}\label{GL3-i}
    \mathcal{D}^{'}_{\widetilde{\chi}_3}(Z_{3,i}\setminus Z_{3,i+1})=0
\end{align}
for each $0 \leq i \leq 3$, which will be proved in the following lemmas.

First we show the case of $i=0$:
\begin{align}\label{GL3-0}
    \mathcal{D}^{'}_{\widetilde{\chi}_3}(Z_{3,0}\setminus Z_{3,1})=\mathcal{D}^{'}_{\widetilde{\chi}_3}(\GL_3(F)\setminus Z_{3,1})=0.
\end{align}
Since $\GL_2(F) \times F^{\times}$ is an $\widetilde{H}_3$ slice of $\GL_3(F)\setminus Z_{3,1}$, by Lemma \ref{lem 2.2}, the assertion in \eqref{GL3-0} follows from 
the following lemma.

\begin{lem}\label{lem 4.2}
Let $\widetilde{H}_{3,1}=\{1,\tau\} \ltimes L_3=\{1,\tau\}\ltimes F^{\times}$, and $\widetilde{\chi}_{3,1}$ be the restriction of $\widetilde{\chi}_3$ to $\widetilde{H}_{3,1}$. Consider the action of $\widetilde{H}_{3,1}$ on $\GL_2(F)\times F^{\times} \hookrightarrow\GL_3(F)$ induced by the action of $\widetilde{H}_3$ on $\GL_3(F)$, we have
\begin{align*}
\D^{\prime}_{\widetilde{\chi }_{3,1}    }\left(  \GL_2(F)\times F^{\times}    \right)=0.
\end{align*}
\end{lem}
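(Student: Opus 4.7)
The plan is to reduce the vanishing to a problem on $\GL_2(F)$ by Lemma \ref{lem-prod}, then attack that problem by a Bernstein-localization argument combined with an extension-obstruction computation on one small stratum.

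By Lemma \ref{lem-prod}, since $\widetilde{H}_{3,1}$ acts trivially on the $F^{\times}$-factor, the vanishing reduces to $\D^{\prime}_{\widetilde{\chi}_{3,1}}(\GL_2(F)) = 0$, where $L_3 = F^{\times}$ acts on $\GL_2(F)$ by conjugation with $\mathrm{diag}(a, 1)$, the element $\tau$ acts by matrix transposition, and $\widetilde{\chi}_{3,1}$ is trivial on $L_3$ with $\widetilde{\chi}_{3,1}(\tau) = -1$. I would apply the Bernstein Localization Principle (Corollary \ref{cor 2.9}) to the $\widetilde{H}_{3,1}$-invariant projection $q \colon \GL_2(F) \to F^2$ sending $g = \begin{pmatrix} a_{11} & b \\ c & a_{22}\end{pmatrix}$ to its diagonal $(a_{11}, a_{22})$, reducing the vanishing to $\D^{\prime}_{\widetilde{\chi}_{3,1}}(q^{-1}(a, d)) = 0$ for every $(a, d) \in F^2$. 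Each fiber is identified with $\{(b, c) \in F^2 : ad - bc \neq 0\}$, on which $\widetilde{H}_{3,1}$ has three orbit types: (i) regular orbits $\{bc = k\}$ for $k \in F \setminus \{0, ad\}$; (ii) the punctured axes $A = \{(b, 0) : b \neq 0\} \cup \{(0, c) : c \neq 0\}$; and (iii) the point $\{(0, 0)\}$ when $ad \neq 0$. On (i), the stabilizer of $(b, c)$ in $\widetilde{H}_{3,1}$ contains the involution $\tau \cdot (c/b)$, on which $\widetilde{\chi}_{3,1}$ takes value $-1$, so Theorem \ref{thm 2.1} yields vanishing. On (iii), the group acts trivially while the character is nontrivial on $\tau$, again yielding vanishing.

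The decisive case is the axes $A$, on which all stabilizers are trivial, so neither Theorem \ref{thm 2.1} nor Lemma \ref{lem 2.7} applies directly. Here I would use the exact sequence \eqref{2.4} for the pair $\{(0, 0)\} \subset \overline{A} := A \cup \{(0, 0)\}$, together with the fact that $\D^{\prime}_{\widetilde{\chi}_{3,1}}(\{(0, 0)\}) = 0$, to conclude that $\D^{\prime}_{\widetilde{\chi}_{3,1}}(\overline{A})$ injects into the one-dimensional space $\D^{\prime}_{\widetilde{\chi}_{3,1}}(A)$ spanned by
\[
T_A(f) = \int_{F^{\times}} f(b, 0)\, d^{\times} b - \int_{F^{\times}} f(0, c)\, d^{\times} c,
\]
and then show that $T_A$ admits no $\widetilde{H}_{3,1}$-equivariant lift to $\overline{A}$. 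Any two lifts differ by a multiple of $\delta_{(0, 0)}$, and since $\delta_{(0, 0)}$ is itself $L_3$-invariant, the obstruction to $L_3$-invariance of a lift is intrinsic. For a natural regularized lift such as
\[
\widetilde{T}_A(f) := \int_{F^{\times}} \bigl[f(b, 0) - f(0, 0)\mathbf{1}_{|b| \leq 1}\bigr]\, d^{\times} b - \int_{F^{\times}} \bigl[f(0, c) - f(0, 0)\mathbf{1}_{|c| \leq 1}\bigr]\, d^{\times} c,
\]
a direct $p$-adic computation shows that the $L_3$-equivariance obstruction is a nonzero functional proportional to $f(0, 0)\, v(a)$, where $v$ is the normalized valuation; no $\delta_{(0, 0)}$-modification, being independent of $|a|$, can cancel this. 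Combining (i)--(iii) with Bernstein localization yields $\D^{\prime}_{\widetilde{\chi}_{3,1}}(\GL_2(F)) = 0$. The main obstacle is this explicit $p$-adic extension computation on the axes stratum, where a logarithmic-type obstruction must be identified and shown to be nonremovable.
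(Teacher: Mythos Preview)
Your argument is correct, but it takes a different and considerably more laborious route than the paper's.

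Both proofs begin identically: use Lemma~\ref{lem-prod} to strip off the $F^{\times}$ factor and reduce to $\D^{\prime}_{\widetilde{\chi}_{3,1}}(\GL_2(F))=0$. After that the approaches diverge. The paper does not stay inside $\GL_2(F)$. Instead it \emph{enlarges} the space to $\mathfrak{gl}_2(F)$, observes that under the coordinates $\begin{pmatrix}x_{11}&x_{12}\\x_{21}&x_{22}\end{pmatrix}\mapsto((x_{11},x_{22}),(x_{12},x_{21}))$ the $\widetilde{H}_{3,1}$-action is the product of the trivial action on the diagonal $F\times F$ and the standard scale-and-swap action on the off-diagonal $F\times F$, and then invokes \cite[Lemma~4.6]{MR2637582} for the latter to get $\D^{\prime}_{\widetilde{\chi}_{3,1}}(\mathfrak{gl}_2(F))=0$. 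Finally it comes back down: the determinant map $\mathfrak{gl}_2(F)\to F$ is $\widetilde{H}_{3,1}$-invariant, $\GL_2(F)=\det^{-1}(F^{\times})$ is a union of fibers, and Corollary~\ref{cor 2.10} gives the desired vanishing on $\GL_2(F)$.

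Your approach instead localizes along the diagonal projection and does the orbit analysis fiber by fiber. The content of your hard axes computation is essentially a hands-on reproof of (a localized form of) \cite[Lemma~4.6]{MR2637582}: indeed, each of your fibers $\{(b,c):bc\neq ad\}$ is itself a union of fibers of the multiplication map $F^2\to F$, so had you invoked that lemma together with Corollary~\ref{cor 2.10} you would have avoided the explicit obstruction calculation entirely. What your route buys is self-containment---you never need to cite the external lemma---at the cost of the regularized-integral computation. What the paper's route buys is brevity: the ``enlarge to $\mathfrak{gl}_2$, split as a product, then restrict back via $\det$'' maneuver sidesteps the whole stratum-by-stratum analysis.
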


\begin{proof}
Since $\widetilde{H}_{3,1}$ acts trivially on $F^{\times}$, it suffices to prove that 
\begin{align}\label{GL2-0}
    \D^{\prime}_{\widetilde{\chi }_{3,1}    }(  \GL_2(F)    )=0,
\end{align}
according to Lemma \ref{lem-prod}.

Extend the action on $\GL_2(F)$ to that on $\gl_2(F)$ and consider the determinant map
\begin{align*}
    \mathrm{det}: \; \gl_2(F) \rightarrow F.
\end{align*}
It is clear the action of $\widetilde{H}_{3,1}$ preserves each fiber of the determinant map. Since $\GL_2(F)= \mathrm{det}^{-1}(F^{\times})$ is a union of fibers, according to Corollary \ref{cor 2.10}, the assertion in \eqref{GL2-0} follows from the following assertion:
\begin{align}\label{GL2-1}
    \D^{\prime}_{\widetilde{\chi }_{3,1}    }(  \gl_2(F)    )=0.
\end{align}
To prove the assertion in \eqref{GL2-1} we identify $\gl_2(F)$ with $(F\times F)\times(F\times F)$ via the map
\begin{align*}
    \begin{pmatrix}
        x_{11} & x_{12} \\
        x_{21} & x_{22} 
    \end{pmatrix} \mapsto ((x_{11},x_{22}),(x_{12},x_{21})).
\end{align*}
Then $\widetilde{H}_{3,1}$ acts on the first $F \times F$ trivially and acts on the second $F\times F$ via
\begin{align*}
a(x_{12},x_{21})=(ax_{12},a^{-1}x_{21}), a\in F^{\times},\quad \mathrm{and} \quad \tau(x_{12},x_{21})=(x_{21},x_{12}).
\end{align*}
It suffices to show $\D^{\prime}_{\widetilde{\chi}_{3,1}}(F\times F)=0$ for the action given above, which is exactly \cite[Lemma 4.6]{MR2637582}. We are done. 
\end{proof}

The case of $i=1$ is given by the following lemma. 
\begin{lem}
    $\mathcal{D}^{'}_{\widetilde{\chi}_3}(Z_{3,1} \setminus Z_{3,2})=0$.
\end{lem}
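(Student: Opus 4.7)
The plan is to apply Theorem \ref{thm 2.1} to the action of $\widetilde{H}_3$ on $Z_{3,1}\setminus Z_{3,2}$. The key geometric observation is that on this stratum the corner entries $\alpha:=x_{13}$ and $\beta:=x_{31}$ satisfy $\alpha\neq\beta$, and the ordered pair $(\alpha,\beta)$ is an $H_3$-invariant: the $L_3$-conjugation by $\mathrm{diag}(a,1,a)$ fixes positions $(1,3)$ and $(3,1)$, and the $N_3\times N_3$-action leaves these two entries untouched. However $\tau$ swaps $(\alpha,\beta)$ to $(\beta,\alpha)\neq(\alpha,\beta)$, so $\tau$ cannot belong to the stabilizer of any $x$ in this stratum, and consequently $\widetilde{H}_{3,x}=H_{3,x}$ throughout. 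A direct Jacobian computation shows $\widetilde{H}_3$ is unimodular (the actions of $L_3$ on the two $N_3$-factors have Jacobians $a^{-1}$ and $a$, which cancel), and $H_{3,x}$ will be abelian, so the compatibility condition of Theorem \ref{thm 2.1} reduces to the nontriviality of $\widetilde{\chi}_3|_{H_{3,x}}$.

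To verify this, I will compute $H_{3,x}$ by hand. Writing a general element as $(l(a),u(c_1,d_1),u(c_2,d_2))$, the equation $l g_1 x g_2^\tau l^{-1}=x$ forces $a=1$ as soon as either of $x_{23},x_{32}$ is nonzero, and produces four linear relations in $(d_1,c_1,d_2,c_2)$ coming from the inner entries $(1,1),(1,2),(2,1),(2,2)$, namely $d_1\beta+d_2\alpha=0$, $d_1x_{32}+c_2\alpha=0$, $c_1\beta+d_2x_{23}=0$, $c_1x_{32}+c_2x_{23}=0$. A cofactor expansion yields the coefficient determinant $-\alpha\beta x_{23}x_{32}+\alpha\beta x_{23}x_{32}=0$, so the kernel is at least one-dimensional everywhere. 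When $\alpha\neq 0$, back-substitution parametrizes the kernel by $d_1$ with $d_2=-(\beta/\alpha)d_1$, $c_1=(x_{23}/\alpha)d_1$, $c_2=-(x_{32}/\alpha)d_1$, and restricting $\widetilde{\chi}_3=\psi_F(d_1+d_2)$ to this line gives $\psi_F\bigl(d_1(\alpha-\beta)/\alpha\bigr)$, which is nontrivial since $\alpha\neq\beta$. The boundary cases $\alpha=0$ (which forces $\beta\neq 0$ and $x_{23}\neq 0$ by $x\in\GL_3(F)$) and $\beta=0$ are handled symmetrically and produce $\psi_F(d_2)$ and $\psi_F(d_1)$ respectively along the one-parameter kernel, each nontrivial.

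Combining these two ingredients, Theorem \ref{thm 2.1} gives $\D^{\prime}_{\widetilde{\chi}_3}(Z_{3,1}\setminus Z_{3,2})=0$. The main obstacle is the uniform verification of character nontriviality across all substrata, in particular the degenerate ones where some of $\alpha,\beta,x_{23},x_{32}$ vanish; the crucial simplification is that the $H_3$-invariant $\alpha-\beta$ is nowhere zero on this stratum, so the phase produced by the generator of the one-parameter unipotent stabilizer is always a nonzero multiple of $d_1$ or $d_2$, and is therefore detected by the nontrivial additive character $\psi_F$.
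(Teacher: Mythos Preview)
Your proof is correct and arrives at exactly the same one-parameter unipotent subgroup of the stabilizer as the paper does: reparametrizing your solution by $d_1=\alpha t$ gives precisely $u(x,t)=g_1(x_{13}t,x_{23}t)$ and $g_2=v(x,t)^{-\tau}$ with $d_2=-x_{31}t$, $c_2=-x_{32}t$, and the character computation $\psi_F(d_1+d_2)=\psi_F((x_{13}-x_{31})t)$ is identical.

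The only difference is packaging. The paper exhibits $u,v$ directly and invokes Remark~\ref{rem .2}, which amounts to applying Theorem~\ref{thm 2.1} to the \emph{unipotent} subgroup $N_3\times N_3$; since every subgroup of a unipotent group is unimodular, the criterion collapses to nontriviality of the character on the stabilizer, and no further discussion is needed. You instead apply Theorem~\ref{thm 2.1} to the full group $\widetilde{H}_3$, which obliges you to argue that $\tau\notin\widetilde{H}_{3,x}$, that $\widetilde{H}_3$ is unimodular, and that $H_{3,x}$ is abelian (hence unimodular). These claims are all true, but the last one is not entirely obvious in the substratum $x_{23}=x_{32}=0$ (where the stabilizer is two-dimensional, containing a torus factor); it holds because the $L_3$-conjugation affects only the $c$-coordinates, which vanish on your one-parameter subgroup. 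The paper's route via Remark~\ref{rem .2} sidesteps all of this.
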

\begin{proof}
    Following from the calculation in \cite[Lemma 8.4]{MR2763736}, we take
    \begin{align*}
        x=\begin{pmatrix}
            x_{11} & x_{22} & x_{13} \\ x_{21} & x_{22} & x_{23} \\ x_{31} &x_{32} & 0
        \end{pmatrix}\in Z_{3,1}\setminus Z_{3,2}
    \end{align*}
    and write 
    \begin{align*}
        u(x,t)=\begin{pmatrix}
            1 & 0 & x_{13}t\\ 0 & 1 & x_{23}t \\ 0 & 0 &  1
        \end{pmatrix} \quad\mathrm{and}\quad v(x,t)=\begin{pmatrix}
            1 & 0 & 0 \\ 0 & 1 & 0 \\ tx_{31} & tx_{32} & 1 
        \end{pmatrix}.
    \end{align*}
    Then $u(x,t)x=xv(x,t)$, and $\chi_3(u(x,t),v(x,t)^{-\tau})=\psi(x_{13}t-x_{31}t)\neq 1$ for a suitable chosen $t\in F$. Then the lemma follows from Theorem \ref{thm 2.1}
    and Remark \ref{rem .2}.
\end{proof}

Here is the case of $i=2$.
\begin{lem}\label{i=2}
    $\mathcal{D}^{'}_{\widetilde{\chi}_3}(Z_{3,2} \setminus Z_{3,3})=0$.
\end{lem}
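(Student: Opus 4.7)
The plan is to stratify $Z_{3,2}\setminus Z_{3,3}$ by the entries $\alpha=x_{23}$ and $\beta=x_{32}$, since the torus $L_3=\{\mathrm{diag}(t,1,t)\}$ scales $(\alpha,\beta)\mapsto(t^{-1}\alpha,t\beta)$ while $\tau$ exchanges them. I split the stratum as $U\sqcup U_B\sqcup Y_C$, where $U=\{\alpha\beta\neq 0\}$ is open, $U_B=\{\alpha\beta=0,\,(\alpha,\beta)\neq(0,0)\}$ is locally closed, and $Y_C=\{\alpha=\beta=0\}$ is closed in $Z_{3,2}\setminus Z_{3,3}$. The main tool will be Lemma \ref{lem 2.7}: whenever one can exhibit $h_0\in H_3$ with $h_0\cdot x=x^\tau$ and $\chi_3(h_0)=1$ for $x$ in a given $H_3$-orbit, that lemma forces every $(H_3,\chi_3)$-equivariant distribution on the orbit to be $\tau$-invariant, and the relation $\widetilde{\chi}_3(\tau)=-1$ forces $\tau$-anti-invariance, which together yield vanishing.

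For $x\in U$, I can solve $(l,g_1,g_2)\cdot x=x^\tau$ explicitly: the $(2,3)$-entry equation pins down $l=\mathrm{diag}(\alpha/\beta,1,\alpha/\beta)$, the $(1,1)$-entry forces $d_1+d_2=0$ on the $(1,3)$-entries of $g_1,g_2\in N_3$, and the remaining block equations determine $c_1,c_2$ uniquely while the $(2,2)$-equation is then automatic. The resulting character $\chi_3(h_0)=\psi_F(d_1+d_2)=1$ is automatic, so the Gelfand--Kazhdan constructibility theorem (Theorem \ref{thm 2.2}) upgrades the orbit-wise vanishing to $\mathcal{D}^{\prime}_{\widetilde{\chi}_3}(U)=0$. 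For $x\in Y_C$ the same recipe works with $l=1$ and $(c_1,c_2)=\bigl((x_{12}-x_{21})/a,\,(x_{21}-x_{12})/a\bigr)$, again giving $\chi_3(h_0)=1$ and hence $\mathcal{D}^{\prime}_{\widetilde{\chi}_3}(Y_C)=0$. The exact sequence \eqref{2.4} then reduces the lemma to proving $\mathcal{D}^{\prime}_{\widetilde{\chi}_3}(U_B)=0$.

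The hard part will be $U_B=U_{B_1}\sqcup U_{B_2}$ with $U_{B_1}=\{\alpha=0,\beta\neq 0\}$ and $U_{B_2}$ its $\tau$-image: these are distinct $H_3$-orbits merely exchanged by $\tau$, so no $h_0\in H_3$ sends $x\in U_{B_1}$ to $x^\tau\in U_{B_2}$ and Lemma \ref{lem 2.7} is unavailable. Worse, the one-dimensional $H_3$-stabilizer of a point of $U_{B_1}$ is parameterized by $d_1\in F$ with $d_2=-d_1$, $c_1=0$, $c_2=-d_1\beta/a$, along which $\chi_3$ restricts trivially, so Theorem \ref{thm 2.1} together with Remark \ref{rem .2} does not give vanishing by character-incompatibility. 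To close this gap I plan to exploit the ambient $\mathfrak{z}$-finiteness of $T$ on $G=\GL_6$ that is inherited through the slice relationship in Lemma \ref{lem 2.2}: lifting $U_B$ to the $H$-saturation $H\cdot U_B$ inside the open $H$-invariant submanifold $G^2_{\mathrm{open},1}=HM_3\subset G$ and invoking Theorems \ref{thm 2.15} and \ref{thm 2.12} through Proposition \ref{prop 2.7}, the task becomes to identify a cotangent direction in the conormal bundle of $H\cdot U_B$ inside $G^2_{\mathrm{open},1}$ which lies outside the nilpotent cone of $\Fg^*$. Producing such a direction via the explicit defining equations of $H\cdot U_B$ inside $G$ would then force $T$ to vanish on the lift, and hence on $U_B$ itself via the slice.
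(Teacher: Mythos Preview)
Your treatment of the open piece $U=\{\alpha\beta\neq 0\}$ and the closed piece $Y_C=\{\alpha=\beta=0\}$ via Lemma~\ref{lem 2.7} is sound (once that lemma is adapted to the $\widetilde{H}_3$-setting, which is routine). The genuine gap is in the $U_B$ stratum. The lemma as stated asserts $\mathcal{D}'_{\widetilde{\chi}_3}(Z_{3,2}\setminus Z_{3,3})=0$ for \emph{all} $(\widetilde{H}_3,\widetilde{\chi}_3)$-equivariant distributions, with no $\mathfrak{z}$-finiteness hypothesis. Your proposed fix for $U_B$ invokes the ambient $\mathfrak{z}$-finiteness of a distribution $T$ on $G=\GL_6$; this is simply not available here, so at best you would be proving a weaker statement than the one claimed. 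In addition, the identification $G^2_{\mathrm{open},1}=HM_3$ is false (one has $G^2_{\mathrm{open},1}=(HM_2\cup HM_3)\sqcup HZ_4$, with $HM_3$ only locally closed), so the open-submanifold hypothesis needed for Proposition~\ref{prop 2.7} is not met as written; and you have not produced the required non-nilpotent conormal vector at all.

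The paper avoids this entirely. Using the $N_3\times N_3$-action one can kill the entries $x_{11},x_{12},x_{21}$, so that
\[
\left\{\begin{pmatrix}0&0&a\\0&x&z\\a&y&0\end{pmatrix}:\ a,x\in F^\times,\ y,z\in F\right\}
\]
is an $H_3$-slice of $Z_{3,2}\setminus Z_{3,3}$, stable under $\widetilde{H}_{3,1}=\{1,\tau\}\ltimes L_3$. Identifying the slice with $(F^\times\times F^\times)\times(F\times F)$, the residual $\widetilde{H}_{3,1}$-action is trivial on $(a,x)$ and on $(y,z)$ is exactly $t\cdot(y,z)=(ty,t^{-1}z)$, $\tau(y,z)=(z,y)$. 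This is precisely the situation of \cite[Lemma~4.6]{MR2637582}, which gives $\mathcal{D}'_{\widetilde{\chi}_{3,1}}(F\times F)=0$ unconditionally. Lemma~\ref{lem-prod} and Lemma~\ref{lem 2.2} then finish the proof. Note that your problematic stratum $U_B$ corresponds to $\{yz=0,(y,z)\neq(0,0)\}$ on the slice, and it is exactly this $\tau$-swapped pair of $L_3$-orbits that \cite[Lemma~4.6]{MR2637582} is designed to handle; no wavefront-set argument is needed.
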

\begin{proof}
    The submanifold $Z_{3,2} \setminus Z_{3,3}$ has an $H_3$ slice given by
\begin{align*}
    \begin{pmatrix}
        0 & 0 & a\\
        0 & x & z\\
        a & y & 0
    \end{pmatrix}, \; a,x \in F^{\times}, \; y,z \in F,
\end{align*}
which is stable under the action of $\widetilde{H}_{3,1}$.
We identify this slice with $(F^{\times}\times F^{\times})\times(F\times F)$ and identify $\widetilde{H}_{3,1}$ with $\{1,\tau\}\ltimes F^{\times}$. Then $\widetilde{H}_{3,1}$ acts trivially on $(F^{\times}\times F^{\times})$ and acts on $(F\times F)$ by
\begin{align*}
    t(y,z)=(ty,t^{-1}z), \; \mathrm{and} \; \tau(y,z)=(z,y).
\end{align*}
Again this lemma follows from Lemma \ref{lem 2.2} and \cite[Lemma 4.6]{MR2637582}. 
\end{proof}
Finally, we consider the case that $i=3$. 
\begin{lem}
    $\mathcal{D}^{'}_{\widetilde{\chi}_3}(Z_{3,3})=0$.
\end{lem}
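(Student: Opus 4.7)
My plan is to reduce the vanishing on $Z_{3,3}$ to a transparent slice computation via Lemma \ref{lem 2.2}. I would take the slice
\[
\mathfrak M := \left\{ m(a,b,c) := \begin{pmatrix} a & 0 & 0 \\ 0 & 0 & b \\ 0 & c & 0 \end{pmatrix} : a,b,c \in F^\times \right\} \cong (F^\times)^3.
\]
That $\mathfrak M$ is an $\widetilde H_3$-slice of $Z_{3,3}$ exploits $b,c \in F^\times$: left multiplication by $u(0,d_1) \in N_3$ shifts $x_{12}$ by $d_1 c$, right multiplication by $u(0,d_2)^\tau$ shifts $x_{21}$ by $d_2 b$, and left multiplication by $u(c_1,0)$ shifts $x_{22}$ by $c_1 c$, so choosing $d_1,d_2,c_1$ appropriately brings an arbitrary element of $Z_{3,3}$ into $\mathfrak M$; the same computation on the Lie algebra level shows that the differential of $\rho_{\mathfrak M}$ is surjective at every point.

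Next, the subgroup $H_{\mathfrak M} := \{1,\tau\} \ltimes L_3 \subset \widetilde H_3$ visibly preserves $\mathfrak M$: the torus $L_3 \cong F^\times$ acts by $t \cdot m(a,b,c) = m(a, t^{-1} b, tc)$, and $\tau$ sends $m(a,b,c)$ to $m(a,c,b)$; moreover, any nontrivial element of $N_3 \times N_3$ that preserves $\mathfrak M$ pointwise would have to satisfy $c_1 c + c_2 b = 0$ for all $b,c$, forcing $c_1 = c_2 = 0$, so no extra unipotent piece appears. From the formulas $\widetilde{\chi}_3|_{L_3} = 1$ and $\widetilde{\chi}_3(\tau) = -1$, Lemma \ref{lem 2.2} yields an injection
\[
\mathcal{D}^{\prime}_{\widetilde{\chi}_3}(Z_{3,3}) \hookrightarrow \mathcal{D}^{\prime}_{\widetilde{\chi}_3|_{H_{\mathfrak M}}}(\mathfrak M),
\]
so it suffices to show that the right-hand side vanishes.

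To finish, observe that the $L_3$-action on $(F^\times)^3$ is free with quotient submersion $q : (F^\times)^3 \to (F^\times)^2$, $(a,b,c) \mapsto (a, bc)$; hence any $L_3$-invariant distribution on $\mathfrak M$ is the pullback along $q$ of a distribution on $(F^\times)^2$. Since $\tau$ fixes both invariants $a$ and $bc$, it descends to the trivial action on the quotient, so every such distribution is automatically $\tau$-invariant. The additional constraint $\tau T = \widetilde{\chi}_3(\tau)^{-1} T = -T$ then forces $T = 0$. The only step requiring real verification is that $\rho_{\mathfrak M}$ is a surjective submersion, which is the direct $3 \times 3$ matrix calculation indicated above and is made possible by $a,b,c \in F^\times$; everything else is the clean symmetry between the $L_3$-invariance and the sign $\widetilde{\chi}_3(\tau) = -1$.
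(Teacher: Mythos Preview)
Your proof is correct. You take the same $H_3$-slice as the paper, namely
\[
\mathfrak M=\left\{\begin{pmatrix} a & 0 & 0 \\ 0 & 0 & b \\ 0 & c & 0\end{pmatrix}:a,b,c\in F^\times\right\},
\]
and correctly identify the residual action of $\widetilde H_{3,1}=\{1,\tau\}\ltimes L_3$ with restricted character $\widetilde\chi_{3,1}$. Where you diverge from the paper is in the endgame on the slice. The paper strips off the first $F^\times$ factor via Lemma~\ref{lem-prod}, then shows $\mathcal D'_{\widetilde\chi_{3,1}}(F^\times\times F^\times)=0$ by embedding $F^\times\times F^\times$ into $F\times F$ as a union of fibers of the multiplication map and invoking the Bernstein localization principle (Corollary~\ref{cor 2.10}) together with \cite[Lemma~4.6]{MR2637582} on $F\times F$. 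Your argument instead exploits that the $L_3$-action on $(F^\times)^3$ is \emph{free}: after the coordinate change $(a,b,c)\mapsto(a,bc,c)$ the action becomes translation on the last factor, so every $L_3$-invariant distribution is $T_0\otimes d^\times c$, and since $\tau$ fixes $(a,bc)$ and the map $c\mapsto bc/c$ preserves Haar measure, $\tau T=T$ follows immediately, contradicting $\tau T=-T$. Your route is more self-contained (no external citation or localization principle needed), while the paper's route is more uniform with the treatment of the neighboring strata $Z_{3,0}\setminus Z_{3,1}$ and $Z_{3,2}\setminus Z_{3,3}$, where the same external lemma is invoked; it also applies without change if one replaces $F^\times\times F^\times$ by any $\widetilde H_{3,1}$-stable locally closed subset of $F\times F$, whereas your argument genuinely uses that the torus acts freely. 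The remark that no unipotent piece of $H_3$ stabilizes $\mathfrak M$ is harmless but unnecessary: Lemma~\ref{lem 2.2} only requires $H_{\mathfrak M}$ to be \emph{some} closed subgroup preserving the slice, not the full stabilizer.
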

\begin{proof}
    The submanifold $Z_{3,3}$ has an $H_3$ slice given by
\begin{align*}
    \begin{pmatrix}
        x & 0 & 0\\
        0 & 0 & z\\
        0 & y & 0
    \end{pmatrix}, \; x,y,z \in F^{\times},
\end{align*}
which is stable under the action of $\widetilde{H}_{3,1}$.
We identify the slice with $F^{\times}\times(F^{\times}\times F^{\times})$. It is clear that $\widetilde{H}_{3,1}$ acts trivially on the first $x\in F^{\times}$, and acts on $(F^{\times}\times F^{\times})$ by
\begin{align*}
    t(y,z)=(ty,t^{-1}z), \; \mathrm{and} \; \tau(y,z)=(z,y).
\end{align*}
As before, the assertion in the lemma follows from Lemma \ref{lem-prod}, Lemma \ref{lem 2.2} and the assertion that 
\[
\mathcal{D}^{'}_{\widetilde{\chi}_{3,1}}(F^{\times}\times F^{\times})=0.
\]
To show that $\mathcal{D}^{'}_{\widetilde{\chi}_{3,1}}(F^{\times}\times F^{\times})=0$, we note the action of $\widetilde{H}_{3,1}$ on $(F^{\times}\times F^{\times})$ is the restriction of action of $\widetilde{H}_{3,1}$ on $F\times F$ as in the proof of Lemma \ref{i=2}. Consider the multiplication map 
\begin{align*}
    F\times F \rightarrow F, \; (y,z) \rightarrow yz.
\end{align*}
It is clear that the action of $\widetilde{H}_{3,1}$ preserves each fiber. Since $F^{\times}\times F^{\times}$ is a union of fibers, by the same argument, the assertion in the lemma 
follows from Corollary \ref{cor 2.10} and  \cite[Lemma 4.6]{MR2637582}. 
\end{proof}

\subsubsection{The case of $Z_4$}\label{section 4.4}

Recall that 
\begin{align*}
    Z_4=\left\{  \begin{pmatrix}
        a_{11} & a_{12} & 0 \\a_{21} & a_{22} & 0 \\ 0 & 0 & y
    \end{pmatrix}   \in \GL_4(F)\times\GL_2(F)\subset G \colon (y^{-1}a_{22})^2=0, a_{22}\neq 0 \right\}.
\end{align*}
From the discussion in Section \ref{section 3} and the results in Sections \ref{section 4.1}, \ref{section 4.2}, we know that $T|_{G^2_{\mathrm{open},1}}$ is supported in $HZ_4$. 
From the general theory (Theorem \ref{thm 2.15}), the wavefront set of $T$ is of nilpotent type \eqref{WFG21}. Hence it is enough to prove the following more general proposition, 
in order to prove that $T|_{G^2_{\mathrm{open},1}}=0$.

\begin{prop}\label{prop 4.4}
    Let $T^{\prime} \in \mathcal{D}^{\prime}_{\chi}(G^2_{\mathrm{open},1})$ be a distribution such that
    \begin{align*}
        \wf_x(T^{\prime})\subset\mathcal{N}, \; \mathrm{for} \; \mathrm{all} \; x\in G^2_{\mathrm{open},1}.
    \end{align*}
    If $T^{\prime} |_{HM_2 \cup HM_3} =0 $, i.e., $T^{\prime}$ is supported in $HZ_4$, then $T^{\prime} =0$.
\end{prop}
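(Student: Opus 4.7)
The plan is to apply the conormal-wavefront criterion of Proposition \ref{prop 2.7}. The nilpotent wavefront hypothesis $\wf_x(T')\subset\mathcal{N}$ takes the place of the $\mathfrak{z}$-finiteness assumption in that statement, so Theorem \ref{thm 2.12} still forces the conormal fiber $\cn^{G^2_{\mathrm{open},1}}_{HZ_4,x}$ to sit inside $\mathcal{N}\subset\mathfrak{g}^*$ at every $x\in\mathrm{supp}(T')\cap HZ_4$. It therefore suffices to exhibit, at some point $x_0\in HZ_4$, a non-zero element of this conormal fiber that is not in $\mathcal{N}$; the resulting contradiction gives $x_0\notin\mathrm{supp}(T')$, and the $H$-equivariance of $T'$ together with the $\mathrm{Ad}$-invariance of $\mathcal{N}$ propagate the vanishing to the entire $H$-orbit of $x_0$. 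Since every $H$-orbit in $HZ_4$ meets $Z_4$, running the argument at representatives in $Z_4$ will give $T'=0$.

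Concretely, fix the representative
\[
x_0=\begin{pmatrix}0&1_2&0\\1_2&n&0\\0&0&1_2\end{pmatrix}\in Z_4,\qquad n=\begin{pmatrix}0&1\\0&0\end{pmatrix},
\]
whose rank matrix is $\left(\begin{smallmatrix}3&2\\2&2\end{smallmatrix}\right)$, confirming $x_0\in G^2_{\mathrm{open},1}$. Inside $T_{x_0}(\GL_4(F)\times\GL_2(F))$, describe $T_{x_0}Z_4$ as the joint kernel of the two differentials $d(\mathrm{tr}(y^{-1}a_{22}))$ and $d(\det(y^{-1}a_{22}))$, computable in closed form using $n^2=0$. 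The ambient conormal $\cn^G_{Z_4,x_0}$ is then spanned by these two forms together with the sixteen coordinate forms picking off the off-block-diagonal entries of $G$ (the defining equations of $\GL_4\times\GL_2\subset G$). Intersecting with the annihilator of the orbit direction
\[
\mathfrak{h}\cdot x_0=\{X_1 x_0+x_0 X_2^{\tau}:(X_1,X_2)\in\mathfrak{s}\oplus\mathfrak{s}\}
\]
gives $\cn^{G^2_{\mathrm{open},1}}_{HZ_4,x_0}$. Within this subspace one searches for a linear combination whose image in $\mathfrak{g}$, via the right-trivialization $T^*_{x_0}G\cong\mathfrak{g}^*$ and the trace form, acquires a non-zero eigenvalue.

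The main obstacle is producing this non-nilpotent conormal element. Each of the eighteen natural generators of $\cn^G_{Z_4,x_0}$, when passed through the right-trivialization at $x_0$, becomes block-triangular in $\gl_6$ with nilpotent diagonal blocks, hence is itself nilpotent; so no generator taken individually will do. A non-nilpotent conormal direction must therefore arise from a carefully chosen linear combination that mixes an upper-right off-block-diagonal generator with a lower-left one (such a mixture can acquire non-zero eigenvalues of the form $\pm\sqrt{\beta_1\beta_2}$) while simultaneously annihilating $\mathfrak{h}\cdot x_0$, and this forces additional cancellations involving the two defining-equation differentials $d(\mathrm{tr}(y^{-1}a_{22}))$ and $d(\det(y^{-1}a_{22}))$. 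If direct verification at the above regular representative proves delicate, the fallback is to stratify $HZ_4$ further into $H$-invariant pieces according to finer invariants of the tuple $(y^{-1}a_{22},a_{11},a_{12},a_{21})$ and to iterate the same conormal-wavefront argument on each stratum, feeding in the vanishing results proved in Sections \ref{section 4.1} and \ref{section 4.2} as the base of the induction.
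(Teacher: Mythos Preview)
Your outline has a genuine gap. The wavefront argument you sketch for $HZ_4$ is never carried out: you set up the conormal computation at one representative $x_0$, observe that the individual generators are nilpotent, and then stop at ``one searches for a linear combination''. But the real issue is that the conormal fiber $\cn^{G^2_{\mathrm{open},1}}_{HZ_4,x}$ is small (the codimension of $HZ_4$ in $G^2_{\mathrm{open},1}$ is low), and there is no reason to expect it to escape the nilpotent cone at a generic point. Even if it did at your $x_0$, you would only conclude $Hx_0\cap\mathrm{supp}(T')=\emptyset$; since $Z_4$ is not a single $H$-orbit, you would have to repeat the verification at \emph{every} orbit representative in $Z_4$, and you give no argument for that.

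Your fallback (``stratify $HZ_4$ further \ldots\ and iterate the same conormal-wavefront argument on each stratum'') is also not what works. The paper's proof does stratify, via
\[
HZ_4\supset H\mathfrak{Z}_{4,1}\supset H\mathfrak{Z}_{4,2}\supset \bigl(H\mathfrak{Z}_{4,3}^{1\,\prime}\cup H\mathfrak{Z}_{4,3}^{2\,\prime}\bigr)\supset H\mathfrak{Z}_{4,3}^{2\,\prime},
\]
but the intermediate strata are \emph{not} handled by wavefront considerations. Two of them are killed by unipotent $\chi$-incompatibility (explicit elements $(u,v^{-\tau})$ in the stabilizer with $\chi(u,v^{-\tau})\neq 1$), and two require the full $(\widetilde{H},\widetilde{\chi})$-equivariance: the relevant orbits admit symmetric representatives, so Lemma~\ref{lem 2.7} forces $\tau$-invariance and hence vanishing. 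Only after the support is squeezed down to the two-parameter family $H\mathfrak{Z}_{4,3}^{2\,\prime}$ does the conormal become large enough to contain an explicit non-nilpotent vector (the paper writes it down and checks $x'x$ has nonzero eigenvalues). You do not mention either the unipotent-incompatibility step or the $\tau$-symmetry step, and without them the reduction cannot be completed.
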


The proof of Proposition \ref{prop 4.4} takes two steps. The first step is to understand the structure of the support of the distribution $T'$ as given in Proposition \ref{prop 4.4}, if it is not zero, 
and the second step is to apply the wavefront set property to the deduced structure of the possible support of $T'$.

In order to understand the structure of the possible support of $T'$, write $\mathfrak{Z}_4$ to be the set of all matrices of the form
\begin{align*}
    x=\left(\begin{smallmatrix}
        x_{11} & x_{12} & x_{13} & 0 & 0 & 0 \\x_{21} & x_{22} & x_{23} & 0 & 0 & 0 \\ x_{31} & x_{32} & 0 & 0 & 0 & 0 \\ 0 & 0 & 0 & 1 & 0 & 0\\ 0& 0 & 0 & 0 & 0 & 1 \\ 0& 0 & 0 & 0 & 1 & 0
    \end{smallmatrix}\right).
\end{align*}
Define  
\begin{align*}
\mathfrak{Z}_{4,1}=\{x\in\mathfrak{Z}_{4}:\; x_{13}=x_{31}\}\quad {\rm and }\quad 
    \mathfrak{Z}_{4,2}=\{x\in\mathfrak{Z}_{4,1}:\; x_{13}=x_{31}=0\}.
\end{align*}
It is clear that $\mathfrak{Z}_{4,2}\subset \mathfrak{Z}_{4,1}\subset \mathfrak{Z}_4$. 
We also denote by $\mathfrak{Z}_{4,2}^{'}$ all matrices in $\mathfrak{Z}_{4,2}$ of the form
\begin{align*}
    x=\left(\begin{smallmatrix}
        x_{11} & 0 & 0 & 0 & 0 & 0 \\0 & 0 & x_{23} & 0 & 0 & 0 \\ 0 & x_{32} & 0 & 0 & 0 & 0 \\ 0 & 0 & 0 & 1 & 0 & 0\\ 0& 0 & 0 & 0 & 0 & 1 \\ 0& 0 & 0 & 0 & 1 & 0
    \end{smallmatrix}\right).
\end{align*}
Write
\begin{align*}
\mathfrak{Z}_{4,3}^{1\;'}=\{ x\in\mathfrak{Z}_{4,2}^{'}:\;x_{23}=x_{32}\}\quad {\rm and}\quad \mathfrak{Z}_{4,3}^{2\;'}=\{ x\in\mathfrak{Z}_{4,2}^{'}:\;x_{23}x_{32}+x_{11}=0\}.
\end{align*}
We consider the following filtration on $Z_4$:
\begin{align*}
    HZ_4\supset H\mathfrak{Z}_{4,1}\supset H\mathfrak{Z}_{4,2}\supset \left(H\mathfrak{Z}_{4,3}^{1\;'}\cup H \mathfrak{Z}_{4,3}^{2\;'}\right)\supset H\mathfrak{Z}_{4,3}^{2\;'}\supset\emptyset.
\end{align*}

We first show that $\mathcal{D}^{'}_{\chi}(HZ_4\setminus H\mathfrak{Z}_{4,1})=0$, which implies that $T'$ is supported in $H\mathfrak{Z}_{4,1}$. We next prove that $\mathcal{D}^{'}_{\widetilde{\chi}}(H\mathfrak{Z}_{4,1}\setminus H\mathfrak{Z}_{4,2})=0$, which implies that $T'$ is supported in $H\mathfrak{Z}_{4,2}$. Finally we confirm that $T'$ is supported in $H\mathfrak{Z}_{4,3}^{2,'}$ by showing  $\mathcal{D}^{'}_{\chi}\left(H\mathfrak{Z}_{4,2}\setminus \left(  H\mathfrak{Z}_{4,3}^{1\;'}\cup H\mathfrak{Z}_{4,3}^{2\;'}  \right)\right)=0$ and $\mathcal{D}_{\widetilde{\chi}}^{'}(H\mathfrak{Z}_{4,3}^{1\;'})=0$. Hence we obtain that any distribution $T'$ as given in Proposition \ref{prop 4.4} is possibly supported on $H\mathfrak{Z}_{4,3}^{2,'}$. 

\begin{lem}\label{Z41}
    $\mathcal{D}^{'}_{\chi}(HZ_4\setminus H\mathfrak{Z}_{4,1})=0$.
\end{lem}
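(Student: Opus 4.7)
The plan is to exhibit $\mathfrak{Z}_{4}$ as an $H$-slice of $HZ_{4}$ and then apply the $p$-adic unipotent $\chi$-incompatibility (Remark~\ref{rem .2}) to the complement of $\mathfrak{Z}_{4,1}$ in this slice. First, I would check that every $H$-orbit in $HZ_{4}$ meets $\mathfrak{Z}_{4}$, by using the $\GL_{2}^{\Delta}$-component of $S\times S$ acting on the lower-right $2\times 2$ block $y$ and on $a_{22}$: the action $(a_{1},a_{2})\cdot y = a_{1}ya_{2}^{\tau}$ lets us normalize $y$ to the standard permutation matrix $w=\left(\begin{smallmatrix} 0 & 1 \\ 1 & 0 \end{smallmatrix}\right)$, and since $y^{-1}a_{22}$ is a non-zero nilpotent, a further conjugation inside $\mathrm{Stab}_{\GL_2^\Delta}(w)$ brings $a_{22}$ to $\left(\begin{smallmatrix} 0 & 0 \\ 0 & 1 \end{smallmatrix}\right)$. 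Finally, using the unipotent radical $U_{2^{3}}\times U_{2^{3}}$ one can zero out the redundant entries of $a_{12}$ and $a_{21}$ outside the positions of $\mathfrak{Z}_{4}$. This produces $\mathfrak{Z}_{4}$ as a genuine slice, so that by Lemma~\ref{lem 2.2} it suffices to show $\mathcal{D}'_{\chi_{\mathfrak{M}}}(\mathfrak{Z}_{4}\setminus\mathfrak{Z}_{4,1})=0$, where $\chi_{\mathfrak{M}}$ is the restriction of $\chi$ to the subgroup of $H$ stabilizing $\mathfrak{Z}_{4}$.

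On $\mathfrak{Z}_{4}\setminus\mathfrak{Z}_{4,1}$ we have $x_{13}\neq x_{31}$. Following the model calculation of \cite[Lemma~8.4]{MR2763736} that was already reused for $Z_{3,1}\setminus Z_{3,2}$ in the previous subsection, I would construct explicit one-parameter families of unipotent elements
\[
u(x,t)\in U_{2^{3}}\subset S,\qquad v(x,t)\in U_{2^{3}}\subset S
\]
whose only non-zero off-diagonal entries lie in the $(1,5)$ and $(3,5)$ (or $(5,1)$ and $(5,3)$) slots corresponding to the mixed $b$- and $c$-blocks, so that the matrix identity $u(x,t)\,x = x\,v(x,t)$ holds for every $t\in F$. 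A direct computation of the characters of $U_{2^{3}}\times U_{2^{3}}$ given by \eqref{GRchi} then yields
\[
\chi\bigl(u(x,t),\,v(x,t)^{-\tau}\bigr) \;=\; \psi_{F}\bigl((x_{13}-x_{31})\,t\bigr).
\]
Because $x_{13}\neq x_{31}$ and $\psi_{F}$ is non-trivial, this character is non-trivial for a suitable choice of $t\in F$, so the stabilizer in $H$ of any such $x$ admits a unipotent element on which $\chi$ is non-trivial.

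Given this, Theorem~\ref{thm 2.1} together with the special case for unipotent groups recorded in Remark~\ref{rem .2} implies that no non-zero $(H,\chi)$-equivariant distribution can be supported on any $H$-orbit of such $x$. Combining this with the Gelfand--Kazhdan Constructability Theorem~\ref{thm 2.2} (applicable because the $H$-action is constructive by Remark~\ref{rem 2.3}(1)) yields $\mathcal{D}'_{\chi_{\mathfrak{M}}}(\mathfrak{Z}_{4}\setminus\mathfrak{Z}_{4,1})=0$, and hence by Lemma~\ref{lem 2.2} we conclude $\mathcal{D}'_{\chi}(HZ_{4}\setminus H\mathfrak{Z}_{4,1})=0$.

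The main obstacle I expect is the careful verification that $\mathfrak{Z}_{4}$ is a slice in the sense of Definition~\ref{defn-1}, namely that the action map $\rho_{\mathfrak{Z}_{4}}\colon H\times\mathfrak{Z}_{4}\to HZ_{4}$ is a surjective submersion and that $\mathfrak{Z}_{4,1}$ is relatively $H$-stable in $\mathfrak{Z}_{4}$ (so that passing to the complement $\mathfrak{Z}_{4}\setminus\mathfrak{Z}_{4,1}$ really corresponds to the open set $HZ_{4}\setminus H\mathfrak{Z}_{4,1}$). Once the slice structure is confirmed, the unipotent witness above is a routine matrix computation mirroring the one already used in the preceding lemma.
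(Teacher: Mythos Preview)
Your overall strategy---exhibit an explicit one-parameter unipotent stabiliser $(u(x,t),v(x,t)^{-\tau})$ at each representative $x\in\mathfrak{Z}_4\setminus\mathfrak{Z}_{4,1}$ on which $\chi$ is non-trivial, then invoke Theorem~\ref{thm 2.1} and Remark~\ref{rem .2}---is exactly the paper's. Two corrections are needed.

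First, the slice reduction via Lemma~\ref{lem 2.2} is an unnecessary detour. The paper does not verify that $\mathfrak{Z}_4$ is a slice or that $\mathfrak{Z}_{4,1}$ is relatively $H$-stable; it simply uses that every $H$-orbit in $HZ_4\setminus H\mathfrak{Z}_{4,1}$ meets $\mathfrak{Z}_4\setminus\mathfrak{Z}_{4,1}$ and checks the stabiliser condition of Theorem~\ref{thm 2.1} at such a representative.

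Second, and more importantly, your proposed matrix positions are wrong and the computation would fail. Entries at $(1,5)$ and $(3,5)$ lie in the $d$- and $c$-blocks of $U_{2^3}$, not the $b$-block; with the specific shape of $x\in\mathfrak{Z}_4$ (whose fifth row and column are $(0,0,0,0,0,1)$) the equation $u(x,t)x=xv(x,t)$ then forces $v$ to have non-zero entries in rows $1$--$4$ of column $6$, so $v^{-\tau}\notin U_{2^3}$. (Note also that you write $v(x,t)\in U_{2^3}$, but it is $v^{-\tau}$ that must lie in $S$, so $v$ itself is block \emph{lower}-triangular.) The paper instead places the entries in the $b$-block:
\[
u(x,t)=I_6 + t\,x_{13}e_{13}+t\,x_{23}e_{23},\qquad v(x,t)=I_6 + t\,x_{31}e_{31}+t\,x_{32}e_{32},
\]
for which $u(x,t)x=xv(x,t)$ is immediate from the vanishing of the $(3,3)$-entry of $x$, and $\chi(u(x,t),v(x,t)^{-\tau})=\psi_F((x_{13}-x_{31})t)$. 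The relevant archimedean model is \cite[Lemma~5.4]{MR2763736}, not Lemma~8.4.
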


\begin{proof}
    Following from the calculation in \cite[Lemma 5.4]{MR2763736}, for any 
    \begin{align*}
        x=\left(\begin{smallmatrix}
            x_{11} & x_{12} & x_{13} & 0 & 0 & 0 \\ x_{21} & x_{22} & x_{23} & 0 & 0 & 0 \\ x_{31}& x_{32}  & 0 & 0 & 0 & 0 \\ 0 & 0 & 0 & 1 & 0 & 0 \\ 0 &0& 0 & 0 & 0 & 1 \\ 0 & 0 & 0 & 0 & 1 & 0
        \end{smallmatrix}\right)\in \FZ_{4}^{\prime}\setminus \FZ_{4,1}^{\prime},
    \end{align*}
     write 
    \begin{align*}
        u(x,t)=\left(\begin{smallmatrix}
            1 & 0 & x_{13}t & 0 & 0 & 0 \\ 0 & 1 & x_{23}t & 0 & 0 & 0 \\ 0 & 0  & 1 & 0 & 0 & 0 \\ 0 & 0 & 0 & 1 & 0 & 0 \\ 0 & 0 & 0 & 0 & 1 & 0 \\ 0 & 0 & 0 & 0 & 0 & 1
        \end{smallmatrix}\right)\quad\mathrm{and}\quad v(x,t)=\left(\begin{smallmatrix}
            1 & 0 & 0 & 0 & 0 & 0 \\ 0 & 1 & 0 & 0 & 0 & 0 \\ tx_{31} & tx_{32}  & 1 & 0 & 0 & 0 \\ 0 & 0 & 0 & 1 & 0 & 0 \\ 0 & 0 & 0 & 0 & 1 & 0 \\ 0 & 0 & 0 & 0 & 0 & 1
        \end{smallmatrix}\right).
    \end{align*}
    Then $u(x,t)x=xv(x,t)$. Since $x_{13}\neq x_{31}$, we have
    \begin{align*}
        \chi(u(x,t),v(x,t)^{-\tau})=\psi_F(x_{13}t-x_{31}t)\neq 1
    \end{align*}
    for a suitable chosen $t\in F$. Thus the lemma follows from Theorem \ref{thm 2.1} and Remark \ref{rem .2} as before.
\end{proof}

\begin{lem}\label{Z42}
    $\mathcal{D}^{'}_{\widetilde{\chi}}(H\mathfrak{Z}_{4,1}\setminus H\mathfrak{Z}_{4,2})=0$.
\end{lem}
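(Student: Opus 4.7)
The plan is to exploit the symmetry $x_{13}=x_{31}$ defining $\mathfrak{Z}_{4,1}$ together with Lemma~\ref{lem 2.7} to obtain $\tau$-invariance of any $(H,\chi)$-equivariant distribution on $H\mathfrak{Z}_{4,1}\setminus H\mathfrak{Z}_{4,2}$, and then to combine this with the identity $\widetilde{\chi}(\tau)=-1$ to force the vanishing. This mirrors a natural dichotomy already present in the paper: the complementary locus $HZ_4\setminus H\mathfrak{Z}_{4,1}$, where $x_{13}\neq x_{31}$, was killed in Lemma~\ref{Z41} by a unipotent character-incompatibility argument coming from $\psi_F(x_{13}t-x_{31}t)\neq 1$; on $x_{13}=x_{31}\neq 0$ that incompatibility degenerates and the vanishing must instead be extracted from an involutive symmetry.

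First I would verify that $\mathfrak{Z}_{4,1}\setminus\mathfrak{Z}_{4,2}$ serves as an $H$-slice of its saturation $H(\mathfrak{Z}_{4,1}\setminus\mathfrak{Z}_{4,2})$ and is $\tau$-stable. The $\tau$-stability is immediate: if $x\in\mathfrak{Z}_{4,1}$ then $x^\tau$ again has upper-left $(1,3)$ and $(3,1)$ entries equal to the common value $x_{13}=x_{31}$, and the fixed lower-right $3\times 3$ permutation block of $\mathfrak{Z}_4$ is symmetric. By Lemma~\ref{lem 2.2}, it then suffices to annihilate the restricted equivariant distributions on the slice.

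Second, for each $x$ in the slice I would produce a pair $h_0=(s_1,s_2)\in H=S\times S$ satisfying simultaneously $s_1\,x\,s_2^{\tau}=x^\tau$ and $(\chi_S\otimes\chi_S)(s_1,s_2)=1$. Heuristically such $h_0$ exists because the symmetry $x_{13}=x_{31}\neq 0$ provides exactly the freedom to conjugate $x$ into $x^\tau$ inside a single $H$-orbit, while the Ginzburg-Rallis character contributions $\psi_F(\Tr(b_i+c_i))\cdot\chi(\det a_i)$ from the two $S$-factors can be arranged to cancel thanks to the matching $(1,3)$ and $(3,1)$ entries. Lemma~\ref{lem 2.7} then gives that any $(H,\chi)$-equivariant distribution on each $H$-orbit in the slice is $\tau$-invariant. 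Since any $\widetilde{\chi}$-equivariant distribution $T$ also satisfies $\tau T = \widetilde{\chi}^{-1}(\tau)T=-T$, these two conditions together force $T=0$ on every orbit, and hence on all of $H\mathfrak{Z}_{4,1}\setminus H\mathfrak{Z}_{4,2}$.

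The main technical obstacle is the explicit construction in the second step: writing down a pair $h_0$ that simultaneously intertwines $x$ with $x^\tau$ and is annihilated by $\chi$ is a delicate matrix computation inside $S$, and the symmetry $x_{13}=x_{31}$ must be leveraged carefully to cancel the $\Tr(b+c)$-contributions from the two $S$-factors. If no single $h_0$ works uniformly on the slice, I would subdivide further and apply Lemma~\ref{lem 2.1} on each stratum, applying Lemma~\ref{lem 2.7} on the generic piece and, if necessary, a residual unipotent $\chi$-incompatibility argument of the type used in Lemma~\ref{Z41} on the exceptional locus.
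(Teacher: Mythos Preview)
Your overall strategy --- use Lemma~\ref{lem 2.7} to obtain $\tau$-invariance on each $H$-orbit, then combine with $\widetilde{\chi}(\tau)=-1$ and the constructibility theorem (Remark~\ref{rem 2.3}) --- is exactly what the paper does. The difference lies in how the hypothesis of Lemma~\ref{lem 2.7} is verified.

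You propose to work on the original slice $\mathfrak{Z}_{4,1}\setminus\mathfrak{Z}_{4,2}$ and, for each $x$, to manufacture a nontrivial $h_0=(s_1,s_2)\in S\times S$ with $s_1 x s_2^{\tau}=x^\tau$ and $\chi(h_0)=1$. You yourself flag this as the ``main technical obstacle'' and leave it as a heuristic. The paper bypasses this computation entirely by changing the slice: invoking \cite[Lemma~5.5]{MR2763736}, one has $H\mathfrak{Z}_{4,1}\setminus H\mathfrak{Z}_{4,2}=H\mathfrak{Z}_{4,1}'$, where $\mathfrak{Z}_{4,1}'$ consists of matrices of the form
\[
\left(\begin{smallmatrix}
0 & 0 & a & 0 & 0 & 0 \\ 0 & x_{22} & 0 & 0 & 0 & 0 \\ a & 0 & 0 & 0 & 0 & 0 \\ 0 & 0 & 0 & 1 & 0 & 0\\ 0& 0 & 0 & 0 & 0 & 1 \\ 0& 0 & 0 & 0 & 1 & b
\end{smallmatrix}\right),
\]
which are \emph{symmetric}. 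Hence every $H$-orbit contains a fixed point of $\tau$, and one may take $h_0=(1_6,1_6)$ in Lemma~\ref{lem 2.7} with no computation at all. Your fallback plan of ``subdividing further'' and looking for good representatives would, if carried out, amount to rediscovering exactly this slice; but as written the proposal does not actually close the gap. Note also that the slice-restriction via Lemma~\ref{lem 2.2} you outline in step one is not needed: once each orbit carries a symmetric representative, Lemma~\ref{lem 2.7} applies orbit-by-orbit and Remark~\ref{rem 2.3} assembles the conclusion, without ever restricting distributions to a slice.
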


\begin{proof}
    According to Lemma 5.5 in \cite{MR2763736}, we have that $H\mathfrak{Z}_{4,1}\setminus H\mathfrak{Z}_{4,2}=H\mathfrak{Z}_{4,1}^{'}$, where $\mathfrak{Z}_{4,1}^{'}$ consists all matrices of the form
\begin{align*}
    x=\left(\begin{smallmatrix}
        0 & 0 & a & 0 & 0 & 0 \\0 & x_{22} & 0 & 0 & 0 & 0 \\ a & 0 & 0 & 0 & 0 & 0 \\ 0 & 0 & 0 & 1 & 0 & 0\\ 0& 0 & 0 & 0 & 0 & 1 \\ 0& 0 & 0 & 0 & 1 & b
    \end{smallmatrix}\right).
\end{align*}
Assume that there is some non-zero $(H,\chi)$-equivariant distribution on it. Since this matrix is symmetric, we can take $h$ to be the identity in Lemma \ref{lem 2.7}. Then each $H$ orbit in $H\mathfrak{Z}_{4,1}^{'}$ is $\tau$-stable and any $(H,\chi)$ invariant distribution on this $H$ orbit will be $\tau$-invariant. In other words, every $(\widetilde{H},\widetilde{\chi})$-equivariant distribution on each $H$ orbit will vanish. By Remark \ref{rem 2.3}, we must have that $\mathcal{D}^{'}_{\widetilde{\chi}}(H\mathfrak{Z}_{4,1}^{\prime})=0$.
\end{proof}

\begin{lem}\label{Z43}
$\mathcal{D}^{'}_{\chi}\left(H\mathfrak{Z}_{4,2}\setminus \left(  H\mathfrak{Z}_{4,3}^{1\;'}\cup H\mathfrak{Z}_{4,3}^{2\;'}  \right)\right)=0$.
\end{lem}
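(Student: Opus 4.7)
The plan is to mimic the structure of Lemmas \ref{Z41} and \ref{Z42}, together with an explicit reduction to a convenient slice of $H\mathfrak{Z}_{4,2}$ as in \cite[Lemma 5.6]{MR2763736}. First I would describe an $H$-slice $\mathfrak{Z}_{4,2}''$ of $H\mathfrak{Z}_{4,2}$: exploiting the diagonal $\GL_2$ in $\mathrm{Stab}_{M_{2^3}}(\psi_{U_{2^3}})$ and the unipotent radical $U_{2^3}\times U_{2^3}^\tau$, one may normalize the block entries $x_{12},x_{21},x_{22}$ of a general $x\in\mathfrak{Z}_{4,2}$ so that each orbit contains a matrix in a reduced form depending only on a few parameters. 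After this reduction, by Lemma \ref{lem 2.2} it suffices to check that $\mathcal{D}'_{\chi'}(\mathfrak{Z}_{4,2}''\setminus(\mathfrak{Z}_{4,3}^{1\,'}\cup\mathfrak{Z}_{4,3}^{2\,'}))=0$, where $\chi'$ is the restriction of $\chi$ to the stabilizer of $\mathfrak{Z}_{4,2}''$.

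Next I would run the unipotent $\chi$-incompatibility argument. For each $x$ in the slice that lies outside both $\mathfrak{Z}_{4,3}^{1\,'}$ and $\mathfrak{Z}_{4,3}^{2\,'}$, I would produce explicit unipotent elements $u(x,t)\in U_{2^3}(F)$ and $v(x,t)\in U_{2^3}(F)$ depending linearly on a parameter $t\in F$ with
\[
u(x,t)\,x = x\,v(x,t),
\]
so that $(u(x,t),v(x,t)^{-\tau})$ lies in the stabilizer of $x$ under the $H$-action. The character value $\chi(u(x,t),v(x,t)^{-\tau})$ will take the form $\psi_F(t\cdot L(x))$ for a linear combination $L(x)$ of the block entries of $x$; the defining equations $x_{23}=x_{32}$ and $x_{23}x_{32}+x_{11}=0$ of $\mathfrak{Z}_{4,3}^{1\,'}$ and $\mathfrak{Z}_{4,3}^{2\,'}$ should be exactly the two cases in which every natural choice of $u,v$ yields $L(x)=0$. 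Outside these two loci there is a choice of $t$ with $\chi(u(x,t),v(x,t)^{-\tau})\neq 1$, and Theorem \ref{thm 2.1} together with Remark \ref{rem .2} then gives vanishing of the equivariant distributions on the orbit through $x$. Applying Remark \ref{rem 2.3}(1) and Theorem \ref{thm 2.2} finishes the argument on the whole slice.

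The main obstacle is the combinatorial bookkeeping: one has to exhibit two distinct families of pairs $(u(x,t),v(x,t))$---covering the two independent ways a unipotent stabilizer element can arise from an entry like $x_{23}$ or $x_{32}$, or from the interplay between $x_{11}$ and the product $x_{23}x_{32}$---and show that their combined effect is non-trivial precisely on the complement of $\mathfrak{Z}_{4,3}^{1\,'}\cup\mathfrak{Z}_{4,3}^{2\,'}$. Concretely, one family of unipotents conjugates by rows/columns carrying $x_{23}$ and $x_{32}$ and produces a character involving $x_{23}-x_{32}$; the other, more delicate family arises from unipotent elements in the third block column/row and produces a character involving $x_{23}x_{32}+x_{11}$. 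Establishing that these two obstructions exhaust the possibilities is the heart of the lemma; once this is done, the rest follows by the already-invoked combination of Lemma \ref{lem 2.2}, Theorem \ref{thm 2.1} and Remark \ref{rem .2}, in parallel with Lemma \ref{Z41}.
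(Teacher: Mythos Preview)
Your overall strategy---reduce to the $H$-slice $\mathfrak{Z}_{4,2}'$ and then run unipotent $\chi$-incompatibility---matches the paper's, but two specifics of your plan do not line up with what actually works.

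First, you write that $u(x,t)$ and $v(x,t)$ should lie in $U_{2^3}(F)$. This is too restrictive. In the paper's argument (following \cite[Lemma 5.6]{MR2763736}) the crucial unipotent element of $S$ has a nontrivial diagonal part $a=\left(\begin{smallmatrix}1&0\\t&1\end{smallmatrix}\right)$; the accompanying $v(x,t)\in S^\tau$ likewise carries the transposed unipotent block on its diagonal. Without allowing this unipotent $\GL_2^\Delta$-component you will not be able to produce a stabilizer element with the required character, because the obstruction $x_{11}+x_{23}x_{32}$ only appears once the diagonal $a$-part interacts with the off-diagonal entries $x_{11},x_{23},x_{32}$.

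Second, you propose two separate one-parameter families, one detecting $x_{23}-x_{32}$ and one detecting $x_{11}+x_{23}x_{32}$. The paper instead uses a \emph{single} family whose character is
\[
\chi\bigl(u(x,t),v(x,t)^{-\tau}\bigr)=\psi_F\!\left((x_{32}^{-1}-x_{23}^{-1})(x_{11}+x_{23}x_{32})\,t\right),
\]
i.e.\ the product of the two obstruction factors. This is nontrivial for suitable $t$ precisely on the complement of $\mathfrak{Z}_{4,3}^{1\,'}\cup\mathfrak{Z}_{4,3}^{2\,'}$, which is exactly the statement of the lemma. So one well-chosen family already does the job, and the ``combinatorial bookkeeping'' you anticipate collapses to a single explicit computation. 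Your two-family variant would, if it worked, prove a stronger vanishing (on the complement of the intersection rather than the union), but it is not clear that a family isolating $x_{11}+x_{23}x_{32}$ alone exists within $S$, and in any case it is unnecessary.
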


\begin{proof}
    Following the calculation in \cite[Lemma 5.6]{MR2763736}, let $x\in H\mathfrak{Z}_{4,2} \setminus \left(H\mathfrak{Z}_{4,3}^{1\;'}\cup H\mathfrak{Z}_{4,3}^{2\;'}\right) $ be of the form
 \begin{align*}
        x=\left(\begin{smallmatrix}
            x_{11} & 0 & 0 & 0 & 0 & 0 \\ 0 & 0 & x_{23} & 0 & 0 & 0 \\ 0 & x_{32}  & 0 & 0 & 0 & 0 \\ 0 & 0 & 0 & 1 & 0 & 0 \\ 0 &0& 0 & 0 & 0 & 1 \\ 0 & 0 & 0 & 0 & 1 & 0
        \end{smallmatrix}\right).
    \end{align*}
    Write 
    \begin{align*}
        u(x,t)=\left(\begin{smallmatrix}
            1 & 0 & x_{11}x_{32}^{-1}t & 0 & 0 & 0 \\ t & 1 & 0 & x_{23}t & 0 & 0 \\ 0 & 0  & 1 & 0 & 0 & 0 \\ 0 & 0 & t & 1 & 0 & 0 \\ 0 & 0 & 0 & 0 & 1 & 0 \\ 0 & 0 & 0 & 0 & t & 1
        \end{smallmatrix}\right)\quad\mathrm{and}\quad v(x,t)=\left(\begin{smallmatrix}
            1 & t & 0 & 0 & 0 & 0 \\ 0 & 1 & 0 & 0 & 0 & 0 \\ x_{11}x_{23}^{-1}t & 0  & 1 & t & 0 & 0 \\ 0 & x_{32}t & 0 & 1 & 0 & 0 \\ 0 & 0 & 0 & 0 & 1 & t \\ 0 & 0 & 0 & 0 & 0 & 1
        \end{smallmatrix}\right).
    \end{align*}
    Then $u(x,t)x=xv(x,t)$ and we have
    \begin{align*}
        \chi(u(x,t),v(x,t)^{-\tau})=\psi_F((x_{32}^{-1}-x_{23}^{-1})(x_{11}+x_{23}x_{32})t)\neq 1
    \end{align*}
    for a suitable chosen $t\in F$. Thus the lemma follows from Theorem \ref{thm 2.1} and Remark \ref{rem .2} as before.
\end{proof}

\begin{lem}\label{Z44}
    $\mathcal{D}^{'}_{\widetilde{\chi}}(H\mathfrak{Z}_{4,3}^{1\;'})=0$.
\end{lem}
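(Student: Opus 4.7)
\textbf{Proof plan for Lemma \ref{Z44}.} The plan is to mimic the proof of Lemma \ref{Z42} by exploiting the fact that every element of $\mathfrak{Z}_{4,3}^{1\;'}$ is a symmetric matrix. Explicitly, an element $x \in \mathfrak{Z}_{4,3}^{1\;'}$ has the form of $\mathfrak{Z}_{4,2}^{'}$ with the extra constraint $x_{23}=x_{32}=a$; so its top-left $3\times 3$ block is $\mathrm{diag}(x_{11},0,0)$ plus the single symmetric off-diagonal pair at positions $(2,3)$ and $(3,2)$, while the bottom-right $3\times 3$ block is the symmetric matrix $\mathrm{diag}(1, \left(\begin{smallmatrix} 0 & 1 \\ 1 & 0 \end{smallmatrix}\right))$. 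Thus $x^{\tau} = x$ for every $x \in \mathfrak{Z}_{4,3}^{1\;'}$, and in particular every $H$-orbit in $H\mathfrak{Z}_{4,3}^{1\;'}$ is stable under $\tau$.

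Given this symmetry, I would apply Lemma \ref{lem 2.7} with $h_0 = (1,1) \in H$: then $h_0 \cdot x = 1 \cdot x \cdot 1^{\tau} = x = \tau(x)$ and $\chi(h_0) = 1$, so any $(H,\chi)$-equivariant distribution on the orbit $Hx$ is automatically $\tau$-invariant. Now suppose $T \in \mathcal{D}^{'}_{\widetilde{\chi}}(H\mathfrak{Z}_{4,3}^{1\;'})$. Since $\widetilde{\chi}|_{H}=\chi$, the restriction of $T$ to each $H$-orbit is $(H,\chi)$-equivariant, hence $\tau$-invariant. On the other hand, $\widetilde{H}$-equivariance forces $\tau T = \widetilde{\chi}(\tau)^{-1} T = -T$, so on each orbit $T|_{Hx} = -T|_{Hx}$, which yields $T|_{Hx}=0$.

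Finally, since each $H$-orbit in $H\mathfrak{Z}_{4,3}^{1\;'}$ is $\tau$-stable, the $\widetilde{H}$-orbits coincide with the $H$-orbits, and the $H$-action (being induced from an algebraic action over $F$) is constructive by Remark \ref{rem 2.3}(1); consequently the $\widetilde{H}$-action is constructive as well. The Gelfand-Kazhdan Constructibility Theorem (Theorem \ref{thm 2.2}), interpreted via Remark \ref{rem 2.3}(2), then upgrades the orbit-by-orbit vanishing to $\mathcal{D}^{'}_{\widetilde{\chi}}(H\mathfrak{Z}_{4,3}^{1\;'}) = 0$. I do not foresee any substantive obstacle here: the proof is a direct structural analogue of Lemma \ref{Z42}, with the only point to verify being the symmetry $x = x^{\tau}$, which is immediate from the shape of matrices in $\mathfrak{Z}_{4,3}^{1\;'}$.
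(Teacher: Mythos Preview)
Your proposal is correct and follows essentially the same argument as the paper's own proof: both exploit that every element of $\mathfrak{Z}_{4,3}^{1\;'}$ is symmetric, apply Lemma~\ref{lem 2.7} with $h_0$ the identity to deduce $\tau$-invariance of any $(H,\chi)$-equivariant distribution on each $H$-orbit (forcing vanishing since $\widetilde{\chi}(\tau)=-1$), and then invoke constructibility via Remark~\ref{rem 2.3} to conclude. The paper explicitly says ``we use the same argument as in the proof of Lemma~\ref{Z42},'' which is precisely what you have done.
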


\begin{proof}
We use the same argument as in the proof of Lemma \ref{Z42}. 
    Assume taht there is some non-zero $(H,\chi)$-equivariant distribution on $H\mathfrak{Z}_{4,3}^{1\;'}$. 
    Since any matrix in $H\mathfrak{Z}_{4,3}^{1\;'}$ is symmetric, we can take $h$ to be the identity in Lemma \ref{lem 2.7}. Then each $H$ orbit in $H\mathfrak{Z}_{4,3}^{1\;'}$ is $\tau$-stable and any $(H,\chi)$ invariant distribution on this $H$ orbit will be $\tau$-invariant. In other words, every $(\widetilde{H},\widetilde{\chi})$-equivariant distribution on each $H$ orbit will vanish. By Remark \ref{rem 2.3}, we must have that $\mathcal{D}^{'}_{\widetilde{\chi}}(H\mathfrak{Z}_{4,3}^{1\;'})=0$. 
\end{proof}

From the first step as discussion given above, the distribution $T'$ as given in Proposition \ref{prop 4.4} is possibly supported on $H\mathfrak{Z}_{4,3}^{2\;'}$. The second step to finish the proof of 
Proposition \ref{prop 4.4} is to apply the wavefront set condition to the distribution $T'$ on $H\mathfrak{Z}_{4,3}^{2\;'}$ and reach a contradiction, which confirms that $T'$ must be zero and completes the proof of Proposition \ref{prop 4.4}.

If $T^{\prime} \neq0 $, since $T^{\prime}$ is $H$-equivariant, there exists $x \in \mathfrak{Z}_{4,3}^{2\;'}$  such that  $x \in \supp(T^{\prime})$. From the definition of the wavefront set, we have that 
$(x,0) \in \wf_x(T^{\prime}).$ 
By Theorem \ref{thm 2.12}, this implies the conormal bundles have the following property:
\begin{align*}
\cn^{G^4_{\mathrm{open}}}_{H\mathfrak{Z}_{4,3}^{2\;'},x}=\cn^G_{H\mathfrak{Z}_{4,3}^{2\;'},x} \subset \wf_x(T').
\end{align*}
On the other hand, according to Lemma 5.8 in \cite{MR2763736}, if we take
\begin{align*}
    x^{'}=\left(\begin{smallmatrix}
        1 & 0 & 0 & 0 & 0 & 0 \\0 & 0 & x_{23} & 0 & 0 & 0 \\ 0 & x_{32} & 0 & 0 & 0 & 0 \\ 0 & 0 & 0 & -x_{23}x_{32} & 0 & 0\\ 0& 0 & 0 & 0 & 0 & 0 \\ 0& 0 & 0 & 0 & 0 & 0
    \end{smallmatrix}\right),
\end{align*}
then 

\begin{align*}
    T_x(H\mathfrak{Z}_{4,3}^{2\;'})\subset (Fx^{\prime})^{\bot}
\end{align*}
where $\bot$ denotes the orthogonal complement with respect to the real trace form. This implies
\begin{align*}
    x^{\prime} \in \cn^G_{H\mathfrak{Z}_{4,3}^{2\;'},x} \subset \wf_x(T').
\end{align*}
Note under the identification of  
    $x^{-1}T_x(H\mathfrak{Z}_{4,3}^{2\;'})$ with a subspace of $T_e(G)$, $x^{\prime}$ is identified with $x^{\prime}x$ in $T_e(G)^*$.
However,
\begin{align*}
    x^{\prime}x=x_{23}x_{32}\left(\begin{smallmatrix}
        -1 & 0 & 0 & 0 & 0 & 0 \\0 & 1 & 0 & 0 & 0 & 0 \\ 0 & 0 & 1 & 0 & 0 & 0 \\ 0 & 0 & 0 & -1 & 0 & 0\\ 0& 0 & 0 & 0 & 0 & 0 \\ 0& 0 & 0 & 0 & 0 & 0
    \end{smallmatrix}\right)
\end{align*}
which is not nilpotent.
This contradicts to the assumption of Proposition \ref{prop 4.4} on the structure of the wavefront set of $T'$. 
Therefore we must have that $T^{\prime}=0$. We are done.

\subsection{The case of $G^2_{\mathrm{open},2}$}\label{ssec-G22}
In order to study the open subset $G^2_{\mathrm{open},2}$ of $G$, we introduce the following stratification:
\begin{align}\label{G2op2-1}
    G^2_{\mathrm{open},2}=(HM_2 \cup H\check{M}_3)\sqcup HW_4,
\end{align}
where $\check{M}_3=\{I_2\} \times \GL_3(F)\times F^{\times} \subset G$ and
\begin{align*}
    W_4=\left\{  \begin{pmatrix}
        y & 0 & 0 \\0 & a_{11} & a_{12} \\ 0 & a_{21} & a_{22}
    \end{pmatrix}   \in \GL_2(F)\times\GL_4(F)\subset G\colon (y^{-1}a_{22})^2=0, a_{22}\neq 0 \right\}.
\end{align*}
By a parallel argument as in Section \ref{section 4.2}, one can show that $\D^{\prime}_{\widetilde{\chi}}(H\check{M}_3)=0$, which implies that the distribution $T|_{G_{\mathrm{open},2}^2}$ must vanish on $HM_2\cup H\check{M}_3$. By applying the same arguments as in Section \ref{section 4.4} to the case $HW_4$, one obtains that $T|_{G_{\mathrm{open,2}}^2}=0$. Combining with Section \ref{ssec-G21}, we complete the proof of Proposition \ref{0=TG3}.

\section{Proof of Proposition \ref{0=TG4}}\label{sec-TG4}

We prove Proposition \ref{0=TG4}, namely, the following vanishing property
     \begin{align}\label{G43}
    \mathcal{D}^{\prime}_{\widetilde{\chi}}(G^4_{\mathrm{open}} \setminus G^3_{\mathrm{open}})=0
\end{align}
for any $T \in \D^{\prime}(G)_{\widetilde{\chi}}$, which is $\mathfrak{z}$-finite as in Theorem \ref{prop 3.2}. 
Recall from \eqref{G3op} and \eqref{G4op}, one has that 
\[
G^4_{\mathrm{open}} \setminus G^3_{\mathrm{open}}=G_R
\]
with the rank-matrix $R=\begin{pmatrix}
        3 & 2 \\2 & 1
    \end{pmatrix}$. For simplicity, we write $Z_6=G_R$. Hence we have to show that 
\begin{align}\label{0=Z6}
    \mathcal{D}^{'}_{\widetilde{\chi}}(Z_6)=0.
\end{align}
To prove the assertion in \eqref{0=Z6}, as in \cite[Section 6]{MR2763736}, we consider the following filtration of successively closed $H$-stable subsets of $Z_6$
\begin{align}\label{Z6filter}
    Z_6=Z_{6,0} \supset Z_{6,1} \supset Z_{6,2} \supset Z_{6,3} \supset  Z_{6,4} \supset Z_{6,5} \supset Z_{6,6}=\emptyset,
\end{align}
where each term in the filtration is defined as follows.

Denote by $\mathfrak{Z}_6$ all matrices in $ Z_6$ of the form
\begin{align*}
    x=\left(\begin{smallmatrix}
        * & * & * & * & x_{15} & 0 \\ * & * & * & * & x_{25} & 0 \\ * & * & 0 & 0 & x_{35} & 0 \\ * & * & 0 & 0 & x_{45} & 0 \\ x_{51} & x_{52} & x_{53} & x_{54} & 0 & 0 \\0 & 0 & 0 & 0 & 0 & 1
    \end{smallmatrix}\right)\in Z_6.
\end{align*}
Write 
\begin{align*}
    \mathfrak{Z}_{6,1}=\{ x\in\mathfrak{Z}_6\;:\;x_{35}=x_{53}   \}\quad {\rm and}\quad \mathfrak{Z}_{6,2}=\{ x\in\mathfrak{Z}_6\;:\;x_{35}=x_{53}=0   \}.
\end{align*}
They are both relatively $H$ stable closed submanifolds of $\mathfrak{Z}_6$ (Definition \ref{defn-1}). Hence both 
\begin{align*}
    Z_{6,1}:=H\mathfrak{Z}_{6,1} \quad\mathrm{and} \quad Z_{6,2}:=H\mathfrak{Z}_{6,2}
\end{align*}
are closed submanifolds of $Z_6$.
Denote by $\mathfrak{Z}_{6,2}^{'}$ all matrices in $\mathfrak{Z}_{6,2}$ of the form
\begin{align*}
    \left(\begin{smallmatrix}
        x_{11} & x_{12} & x_{13} & 0 & 0 & 0 \\ x_{21} & x_{22} & x_{23} & 0 & 0 & 0 \\ x_{31} & x_{32} & 0 & 0 & 0 & 0 \\ 0 & 0 & 0 & 0 & 1 & 0 \\ 0 & 0 & 0 & 1 & 0 & 0 \\0 & 0 & 0 & 0 & 0 & 1
    \end{smallmatrix}\right),
\end{align*}
which also forms an $H$ slice of $Z_{6,2}$. Set
\begin{align*}
    \mathfrak{Z}_{6,3}^{'}=\{ x\in\mathfrak{Z}_{6,2}^{'}\;:\;x_{13}=x_{31}  \}\quad {\rm and}\quad \mathfrak{Z}_{6,4}^{'}=\{ x\in\mathfrak{Z}_{6,2}^{'}\;:\;x_{13}=x_{31}=0 \}.
\end{align*}
They are both relatively $H$ stable closed submanifolds of $\mathfrak{Z}_{6,2}^{'}$ as well. Thus both
\begin{align*}
    Z_{6,3}:=H\mathfrak{Z}_{6,3}^{'} \;\mathrm{and}\;Z_{6,4}:=H\mathfrak{Z}_{6,4}^{'}
\end{align*}
are closed submanifolds of $Z_{6,2}$.
Finally, denote by $\mathfrak{Z}_{6,4}^{''}$ all matrices in $\mathfrak{Z}_{6,4}^{'}$ of the form
\begin{align*}
    \left(\begin{smallmatrix}
        x_{11} & 0 & 0 & 0 & 0 & 0 \\ 0 & 0 & x_{23} & 0 & 0 & 0 \\ 0 & x_{32} & 0 & 0 & 0 & 0 \\ 0 & 0 & 0 & 0 & 1 & 0 \\ 0 & 0 & 0 & 1 & 0 & 0 \\0 & 0 & 0 & 0 & 0 & 1
    \end{smallmatrix}\right),
\end{align*}
which also forms an $H$ slice of $Z_{6,4}$. Set
\begin{align*}
    \mathfrak{Z}_{6,5}^{''}=\{x\in\mathfrak{Z}_{6,4}^{''}\;:\;x_{23}=x_{32}   \}
\end{align*}
which is a relatively $H$ stable closed submanifold of $\mathfrak{Z}_{6,4}^{''}$. Hence the set 
\begin{align*}
    Z_{6,5}:=H\mathfrak{Z}_{6,5}^{''}
\end{align*}
is also a closed submanifold of $Z_{6,4}$.

To prove the assertion in \eqref{0=Z6}, by Lemma \ref{lem 2.1}, it is enough to show that  
\begin{align}\label{Z0-5}
    \mathcal{D}^{'}_{\widetilde{\chi}}(Z_{6,i}\setminus Z_{6,i+1})=0
\end{align}
with $i=0,1,2,\cdots,5$.

First, we prove the case of $i=0$. 

\begin{lem}
    $\mathcal{D}^{'}_{\chi}(Z_6\setminus Z_{6,1})=0$.
\end{lem}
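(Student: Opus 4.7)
The plan is to apply the unipotent $\chi$-incompatibility principle (Theorem \ref{thm 2.1} together with Remark \ref{rem .2}) pointwise at each $x\in Z_6\setminus Z_{6,1}$, with the discrepancy $x_{35}-x_{53}\in F^{\times}$ playing the role that $x_{13}-x_{31}$ plays in Lemma \ref{Z41}. By $H$-equivariance it suffices to treat $x\in\mathfrak{Z}_6\setminus\mathfrak{Z}_{6,1}$, and the task is to produce a one-parameter family $(u(x,t),v(x,t)^{-\tau})_{t\in F}$ of unipotent elements in $\Stab_H(x)$ on which $\chi$ is not identically trivial.

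Following the template of \cite[Lemma 6.2]{MR2763736} and mirroring the construction in Lemma \ref{Z41}, I would take $u(x,t)\in U_{2^3}\subset S$ to be the block unipotent whose only nontrivial off-diagonal entries sit in its $c$-block, with $u_{3,5}=tx_{35}$ and $u_{4,5}=tx_{45}$, and $v(x,t)\in U_{2^3}^{\tau}$ to be the corresponding lower unipotent with $v_{5,3}=tx_{53}$ and $v_{5,4}=tx_{54}$, possibly augmented by auxiliary unipotent entries in the $b$- and $d$-blocks of $u$ (and the transposed blocks of $v$) chosen to absorb the cross-terms $tx_{i5}x_{5j}$ produced by the generic entries in rows $1,2$ and columns $1,2$ of $x$. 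Exploiting the vanishing of the central $2\times 2$ block of $x$ together with the row/column constraints built into the rank matrix $R=\bigl(\begin{smallmatrix}3 & 2\\2 & 1\end{smallmatrix}\bigr)$, one verifies $u(x,t)\,x=x\,v(x,t)$, so that $(u(x,t),v(x,t)^{-\tau})\in\Stab_H(x)$.

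The character computation using \eqref{GRchi} should then give
\[
\chi\bigl(u(x,t),v(x,t)^{-\tau}\bigr)=\psi_F\bigl((x_{35}-x_{53})t\bigr),
\]
which is nontrivial for a suitable $t\in F$ because $x_{35}-x_{53}\in F^{\times}$. Theorem \ref{thm 2.1} and Remark \ref{rem .2} then deliver $\mathcal{D}^{'}_{\chi}(Z_6\setminus Z_{6,1})=0$. The main technical obstacle, and what distinguishes this lemma from the simpler Lemma \ref{Z41}, is the bookkeeping in the construction: the auxiliary $b$- and $d$-block corrections must be selected so that $u(x,t)\,x=x\,v(x,t)$ holds identically on the slice while their individual contributions to $\chi$ cancel pairwise, leaving only the $c$-block trace $\psi_F((x_{35}-x_{53})t)$. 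This is precisely the analog of the Archimedean computation in \cite[Lemma 6.2]{MR2763736}, and it is where the bulk of the work resides.
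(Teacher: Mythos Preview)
Your approach is correct and coincides with the paper's: unipotent $\chi$-incompatibility applied to points of the slice $\mathfrak{Z}_6\setminus\mathfrak{Z}_{6,1}$, yielding $\chi(u,v^{-\tau})=\psi_F((x_{35}-x_{53})t)$, exactly as in \cite[Lemma 6.2]{MR2763736}.

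One simplification worth noting: the bookkeeping you anticipate is not actually needed. The paper takes $u(x,t)$ with $u_{i5}=tx_{i5}$ for \emph{all} $i=1,2,3,4$ (so entries in both the $d$-block and the $c$-block of $U_{2^3}$, not just the $c$-block) and $v(x,t)$ with $v_{5j}=tx_{5j}$ for all $j=1,2,3,4$. With this choice $u(x,t)x$ and $xv(x,t)$ both equal $x+t\,(\text{5th column of }x)\cdot(\text{5th row of }x)$, using only $x_{55}=x_{56}=x_{65}=0$; no $b$-block corrections and no cross-terms arise. Since $\chi_S$ depends only on $\Tr(b+c)$, the $d$-block entries contribute nothing to the character and there is nothing to cancel pairwise---the value $\psi_F((x_{35}-x_{53})t)$ drops out immediately.
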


\begin{proof}
    Following from the calculation in \cite[Lemma 6.2]{MR2763736}, for any 
    \begin{align*}
        x=\left(\begin{smallmatrix}
            * & * & * & * & x_{15} & 0 \\ * & * & * & * & x_{25} & 0 \\ *& *  & 0 & 0 & x_{35} & 0 \\ * & * & 0 & 0 & x_{45} & 0 \\ x_{51} & x_{52} & x_{53} & x_{54} & 0 & 0 \\ 0 & 0 & 0 & 0 & 0 & 1
        \end{smallmatrix}\right)\in \FZ_6\setminus \FZ_{6,1}
    \end{align*}
    and write 
    \begin{align*}
        u(x,t)=\left(\begin{smallmatrix}
            1 & 0 & 0 & 0 & x_{15}t & 0 \\ 0 & 1 & 0 & 0 & x_{25}t & 0 \\ 0 & 0  & 1 & 0 & x_{35}t & 0 \\ 0 & 0 & 0 & 1 & x_{45}t & 0 \\ 0 & 0 & 0 & 0 & 1 & 0 \\ 0 & 0 & 0 & 0 & 0 & 1
        \end{smallmatrix}\right)\quad\mathrm{and}\quad v(x,t)=\left(\begin{smallmatrix}
            1 & 0 & 0 & 0 & 0 & 0 \\ 0 & 1 & 0 & 0 & 0 & 0 \\ 0 & 0  & 1 & 0 & 0 & 0 \\ 0 & 0 & 0 & 1 & 0 & 0 \\ tx_{51} & tx_{52} & tx_{53} & tx_{54} & 1 & 0 \\ 0 & 0 & 0 & 0 & 0 & 1
        \end{smallmatrix}\right).
    \end{align*}
    Then $u(x,t)x=xv(x,t)$, and $\chi(u(x,t),v(x,t)^{-\tau})=\psi_F((x_{35}-x_{53})t)\neq 1$ for suitable chosen $t\in F$. Then the lemma follows from Theorem \ref{thm 2.1}
    and Remark \ref{rem .2}.
\end{proof}

The case of $i=1$ is given by 

\begin{lem}
    $\mathcal{D}^{'}_{\chi}(Z_{6,1}\setminus Z_{6,2})=0$.
\end{lem}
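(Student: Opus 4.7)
The plan is to mimic the strategy used in Lemma \ref{Z42} (the case of $H\mathfrak{Z}_{4,1}\setminus H\mathfrak{Z}_{4,2}$), which exploits the transpose symmetry of an appropriate $H$-slice and the fact that $\widetilde{\chi}(\tau)=-1$.

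First I would identify a convenient $H$-slice $\mathfrak{Z}_{6,1}^{\prime}\subset\mathfrak{Z}_{6,1}\setminus\mathfrak{Z}_{6,2}$. Recall that a matrix in $\mathfrak{Z}_{6,1}\setminus\mathfrak{Z}_{6,2}$ looks like
\[
\left(\begin{smallmatrix}
        * & * & * & * & x_{15} & 0 \\ * & * & * & * & x_{25} & 0 \\ * & * & 0 & 0 & a & 0 \\ * & * & 0 & 0 & x_{45} & 0 \\ x_{51} & x_{52} & a & x_{54} & 0 & 0 \\0 & 0 & 0 & 0 & 0 & 1
    \end{smallmatrix}\right),\qquad a\neq 0.
\]
Using the free action of the unipotent parts of $S\times S$ (conjugating by suitable upper-triangular blocks from the left and right) I would successively clear out the entries marked $*$ that correspond to unipotent directions, following the analogous reduction carried out in \cite[Lemma~6.3]{MR2763736} for the Archimedean case. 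The resulting $H$-slice should consist of $\tau$-symmetric matrices of the shape appearing in the proof of Lemma \ref{Z42}, i.e., every element $x$ in the slice satisfies $x=x^{\tau}$.

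Once such a slice is in hand, I would apply Lemma \ref{lem 2.7} with $h_0=1\in H$ (which has $\widetilde{\chi}(h_0)=1$ and $h_0 x=x^{\tau}=\tau(x)$ by symmetry): every $(H,\chi)$-equivariant distribution on the orbit of each $x$ in the slice is automatically $\tau$-invariant. Since $\widetilde{\chi}(\tau)=-1$, a $(\widetilde{H},\widetilde{\chi})$-equivariant distribution on such an orbit would have to be both $\tau$-invariant and $\tau$-antiinvariant, forcing it to vanish. Constructibility of the $H$-action on the algebraic variety at hand (Remark \ref{rem 2.3}(1)) combined with Theorem \ref{thm 2.2} then yields $\mathcal{D}_{\widetilde{\chi}}^{\prime}(H\mathfrak{Z}_{6,1}^{\prime})=0$. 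Finally, Lemma \ref{lem 2.2} transfers this vanishing from the slice back to the full submanifold $Z_{6,1}\setminus Z_{6,2}$.

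The main obstacle I anticipate is the first step: verifying that the unipotent reduction actually produces a slice of symmetric matrices. In particular one must check that the remaining (non-clearable) free parameters genuinely satisfy $x=x^{\tau}$, which requires tracking carefully how the $H$-action interacts with the pinned entry $x_{35}=x_{53}=a$ and the constraint $\mathrm{rank}_{2\times 2}(x)=1$ coming from $Z_6$. Once this bookkeeping is settled, the rest of the argument proceeds exactly in parallel with Lemma \ref{Z42}, and no new ingredient (in particular, no wavefront-set input) is needed at this stage.
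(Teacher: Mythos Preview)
Your approach has a genuine gap at exactly the step you flagged. The $H$-slice of $Z_{6,1}\setminus Z_{6,2}$ produced by the unipotent reduction of \cite[Lemma~6.3]{MR2763736} consists of matrices
\[
z=\left(\begin{smallmatrix}
    x_{11} & x_{12} & 0 & y_1 & 0 & 0\\
    x_{21} & x_{22} & 0 & y_2 & 0 & 0\\
    0 & 0 & 0 & 0 & a & 0\\
    w_1 & w_2 & 0 & 0 & 0 & 0\\
    0 & 0 & a & 0 & 0 & 0\\
    0 & 0 & 0 & 0 & 0 & 1
\end{smallmatrix}\right),
\]
with $(y_1,y_2)$ in column~4 and $(w_1,w_2)$ in row~4 \emph{independent}; so the slice is not $\tau$-symmetric and the analogy with Lemma~\ref{Z42} fails. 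No element of $H$ exchanges the fourth row and fourth column, so one cannot force $y_i=w_i$ by an $H$-move. Concretely, after imposing $y_2=w_2=b$ the residual stabiliser in $H$ acts on $(y_1,w_1)$ by $t\cdot(y_1,w_1)=(ty_1,t^{-1}w_1)$; a symmetric representative would require $w_1/y_1$ to be a square in $F^{\times}$, which fails for generic orbits over a $p$-adic field. Hence Lemma~\ref{lem 2.7} with $h_0=1$ is unavailable on a nonempty union of $H$-orbits.

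The paper runs a two-stage argument instead. On the locus $y_2\neq w_2$ one exhibits unipotent $u(z,t),v(z,t)$ with $u(z,t)z=zv(z,t)$ and $\chi(u,v^{-\tau})=\psi_F((y_2-w_2)t)\neq 1$, so Remark~\ref{rem .2} kills $(H,\chi)$-equivariant distributions there. On the remaining locus $y_2=w_2$ one passes to a further slice $\mathfrak{Z}''$ stable under the subgroup $\widetilde{H}_{\mathfrak{Z}''}\cong\{1,\tau\}\ltimes F^{\times}$; the only nontrivial coordinates are $(y_1,w_1)$ with the action above and $\tau(y_1,w_1)=(w_1,y_1)$, and via Lemmas~\ref{lem 2.2} and \ref{lem-prod} the problem reduces to $\mathcal{D}^{\prime}_{\chi_{\widetilde{F^{\times}}}}(F\times F)=0$, which is \cite[Lemma~4.6]{MR2637582}. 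Thus $\tau$ does enter, but through this descended $\widetilde{F^{\times}}$-action rather than through a global symmetric slice.
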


\begin{proof}
    By Lemma 6.3 in \cite{MR2763736}, $Z_{6,1}\setminus Z_{6,2}$ has the $H$ slice $\mathfrak{Z}$ given by
\begin{align*}
z =\left(\begin{smallmatrix}
    x_{11} & x_{12} & 0 & y_1 & 0 & 0\\
    x_{21} & x_{22} & 0 & y_2 & 0 & 0\\
    0 & 0 & 0 & 0 & a & 0\\
    w_1 & w_2 & 0 & 0 & 0 & 0\\
    0 & 0 & a & 0 & 0 & 0\\
    0 & 0 & 0 & 0 & 0 & 1
    \end{smallmatrix}\right).
\end{align*}
Write $\mathfrak{Z^{\prime}}= \{ z\in \mathfrak{Z} \; \mathrm{with}  \; w_2 =y_2 \}$. 
This is a relative $H$ stable submanifold of $\mathfrak{Z}$. 
On $H(\mathfrak{Z} \setminus \mathfrak{Z^{\prime}})$, write
\begin{align*}
    u(z,t)=\left(\begin{smallmatrix}
        1 & 0 & 0 & y_1 t & 0 & 0 \\
        0 & 1 & 0 & y_2 t & 0 & 0 \\
        0 & 0 & 1 & 0 & 0 & 0\\
        0 & 0 & 0 & 1 & 0 & 0\\
        0 & 0 & 0 & 0 & 1 & 0\\
        0 & 0 & 0 & 0 & 0 & 1
    \end{smallmatrix}\right)\quad {\rm and}\quad 
    v(z,t)=\left(\begin{smallmatrix}
        1 & 0 & 0 & 0 & 0 & 0 \\
        0 & 1 & 0 & 0 & 0 & 0 \\
        0 & 0 & 1 & 0 & 0 & 0\\
        w_1 t & w_2 t & 0 & 1 & 0 & 0\\
        0 & 0 & 0 & 0 & 1 & 0\\
        0 & 0 & 0 & 0 & 0 & 1
    \end{smallmatrix}\right).
\end{align*}
Then it is straightforward to see that 
\begin{align*}
    u(z,t)z=z v(z,t), \;  \mathrm{i.e.}, \; (u(z,t),v(z,t)^{-\tau})z=z.
\end{align*}
Since $w_2 \neq y_2$, we must have that 
\begin{align*}
    \chi(u(z,t),v(z,t)^{-\tau})=\psi(y_2 t- w_2 t) \neq 1,
\end{align*}
for suitable chosen $t\in F$.
By Theorem \ref{thm 2.1} and Remark \ref{rem .2}, we deduce that $\mathcal{D}^{'}_{\chi}(H\mathfrak{Z}\setminus H\mathfrak{Z}^{'})=0$.

On $H \mathfrak{Z^{\prime}}$, take $w_2 = y_2=b$. This submanifold has an $H$ slice $\mathfrak{Z}^{''}$ given by
\begin{align*}
\left(\begin{smallmatrix}
    x_{11} & 0 & 0 & y_1 & 0 & 0\\
    0 & x_{22} & 0 & b & 0 & 0\\
    0 & 0 & 0 & 0 & a & 0\\
    w_1 & b & 0 & 0 & 0 & 0\\
    0 & 0 & a & 0 & 0 & 0\\
    0 & 0 & 0 & 0 & 0 & 1
\end{smallmatrix}\right).
\end{align*}
It is easy to verify that $\mathfrak{Z}^{''}$ is stable under the subgroup $H_{\mathfrak{Z}^{''}}$ of $H$, which consists of elements of the form
\begin{align*}
    \left(\left(\begin{smallmatrix}
      t & 0 & 0 & 0 & 0 & 0 \\
        0 & 1 & 0 & 0 & 0 & 0 \\
        0 & 0 & t & 0 & 0 & 0\\
        0 & 0 & 0 & 1 & 0 & 0\\
        0 & 0 & 0 & 0 & t & 0\\
        0 & 0 & 0 & 0 & 0 & 1  
    \end{smallmatrix}\right),
    \left(\begin{smallmatrix}
      t^{-1} & 0 & 0 & 0 & 0 & 0 \\
        0 & 1 & 0 & 0 & 0 & 0 \\
        0 & 0 & t^{-1} & 0 & 0 & 0\\
        0 & 0 & 0 & 1 & 0 & 0\\
        0 & 0 & 0 & 0 & t^{-1} & 0\\
        0 & 0 & 0 & 0 & 0 & 1   
    \end{smallmatrix}\right)\right).
\end{align*}
Consider $\widetilde{H_{\mathfrak{Z}}}$ defined as before and let $\widetilde{\chi}_{\mathfrak{Z}^{''}}$ be the restriction of $\widetilde{\chi}$ to $\widetilde{H_{\mathfrak{Z}^{''}}}$. 
Write 
\begin{align*}
    \widetilde{F^{\times}}= \{1,\tau\}\ltimes F^{\times}
\end{align*}
with the semidirect product given by the action 
$\tau(t)=t^{-1}$ 
and let $\chi_{\widetilde{F^{\times}}}$ be the character on $\widetilde{F^{\times}}$ such that 
$\chi_{\widetilde{F^{\times}}}(F^{\times})=1$ and $\chi_{\widetilde{F^{\times}}}(\tau)=-1$.
Let $\widetilde{F^{\times}}$ act on $F^6=F^2 \times F^4$ by
    \begin{align*}
        t(y_1, w_1) =(ty_1, t^{-1}w_1), \; t(x_{11}, x_{22}, a, b)=(x_{11}, x_{22}, a, b)
    \end{align*}
and \begin{align*}
    \tau(y_1, w_1)=(w_1, y_1),  \; \tau(x_{11}, x_{22}, a, b)=(x_{11}, x_{22}, a, b).
\end{align*}
To prove that $\mathcal{D}^{'}_{\widetilde{\chi}_{\mathfrak{Z}^{''}}}(\mathfrak{Z}^{''})=0$, it is enough to prove that  
$\mathcal{D}^{'}_{\chi_{\widetilde{F^{\times}}}}(F^6)=0.$ 
Since the action of $\widetilde{F^{\times}}$ on $F^4$ is trivial, where $F^4$ is the second factor of $F^6=F^2 \times F^4$, 
it reduces to show that $\mathcal{D}^{'}_{\chi_{\widetilde{F^{\times}}}}(F^2)=0$ by Lemma \ref{lem-prod}, which is exactly \cite[Lemma 4.6]{MR2637582}.
Finally by Lemma \ref{lem 2.2}, we must have that 
\begin{align*}
       \mathcal{D}^{'}_{\widetilde{\chi}}(H \mathfrak{Z^{\prime}})=\mathcal{D}^{'}_{\widetilde{\chi}}(H \mathfrak{Z}^{''})=0.
\end{align*}
\end{proof}

The case of $i=2$ is given by 
\begin{lem}
    $\mathcal{D}^{'}_{\chi}(Z_{6,2}\setminus Z_{6,3})=0$.
\end{lem}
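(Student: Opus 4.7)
My plan is to treat $Z_{6,2}\setminus Z_{6,3}$ in direct parallel to the case $HZ_4\setminus H\mathfrak{Z}_{4,1}$ handled in Lemma \ref{Z41}, since the two situations are essentially isomorphic after deleting the trivial permutation block in the lower-right $3\times 3$ corner. Specifically, the $H$-slice $\mathfrak{Z}_{6,2}^{'}\setminus \mathfrak{Z}_{6,3}^{'}$ consists of matrices whose upper-left $3\times 3$ block has the shape
\[
\begin{pmatrix} x_{11} & x_{12} & x_{13}\\ x_{21} & x_{22} & x_{23}\\ x_{31} & x_{32} & 0\end{pmatrix}
\]
with $x_{13}\neq x_{31}$, together with the fixed lower-right block $\left(\begin{smallmatrix}0&1&0\\1&0&0\\0&0&1\end{smallmatrix}\right)$. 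Thanks to Lemma \ref{lem 2.2}, it suffices to produce a non-vanishing obstruction character on this slice.

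I would mimic the construction of Lemma \ref{Z41} and set, for $x$ in the slice and $t\in F$,
\[
u(x,t)=\left(\begin{smallmatrix}
1 & 0 & x_{13}t & 0 & 0 & 0 \\
0 & 1 & x_{23}t & 0 & 0 & 0 \\
0 & 0 & 1 & 0 & 0 & 0 \\
0 & 0 & 0 & 1 & 0 & 0 \\
0 & 0 & 0 & 0 & 1 & 0 \\
0 & 0 & 0 & 0 & 0 & 1
\end{smallmatrix}\right),\qquad
v(x,t)=\left(\begin{smallmatrix}
1 & 0 & 0 & 0 & 0 & 0 \\
0 & 1 & 0 & 0 & 0 & 0 \\
tx_{31} & tx_{32} & 1 & 0 & 0 & 0 \\
0 & 0 & 0 & 1 & 0 & 0 \\
0 & 0 & 0 & 0 & 1 & 0 \\
0 & 0 & 0 & 0 & 0 & 1
\end{smallmatrix}\right).
\]
A direct column-by-row computation, identical to the one underlying Lemma \ref{Z41}, shows that $u(x,t)\,x = x\,v(x,t)$ (the cross-terms coming from multiplying row $3$ of $x$ by $x_{13}t, x_{23}t$ match exactly those from multiplying column $3$ of $x$ by $tx_{31}, tx_{32}$, and the zero entries of $x$ in the fourth column onward make the lower-right blocks unaffected). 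Hence $(u(x,t), v(x,t)^{-\tau})\in H_x$ is in the stabilizer.

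The character evaluates to
\[
\chi\bigl(u(x,t), v(x,t)^{-\tau}\bigr)=\psi_F(x_{13}t)\cdot\psi_F(-x_{31}t)=\psi_F\bigl((x_{13}-x_{31})t\bigr),
\]
which, since $x_{13}\neq x_{31}$ on $\mathfrak{Z}_{6,2}^{'}\setminus \mathfrak{Z}_{6,3}^{'}$, can be made non-trivial by choosing $t\in F$ appropriately. Invoking Theorem \ref{thm 2.1} together with Remark \ref{rem .2} (the unipotent $\chi$-incompatibility mechanism), we conclude that there is no non-zero $(H,\chi)$-equivariant distribution on this slice, and therefore $\mathcal{D}^{'}_{\chi}(Z_{6,2}\setminus Z_{6,3})=0$ by Lemma \ref{lem 2.2}. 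I do not foresee a substantive obstacle here: since the key defining inequality $x_{13}\neq x_{31}$ on this stratum is structurally the same as in Lemma \ref{Z41}, only bookkeeping about the inert lower-right permutation block needs care, and that block commutes trivially with the unipotent perturbations above.
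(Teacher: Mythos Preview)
Your proposal is correct and matches the paper's own proof essentially verbatim: the paper uses the same slice $\mathfrak{Z}_{6,2}^{'}\setminus\mathfrak{Z}_{6,3}^{'}$, the identical unipotent elements $u(x,t)$ and $v(x,t)$, the same character computation $\chi(u,v^{-\tau})=\psi_F((x_{13}-x_{31})t)$, and concludes via Theorem \ref{thm 2.1} and Remark \ref{rem .2}. Your explicit comparison to Lemma \ref{Z41} is apt, since the paper's argument here is indeed a direct transcription of that case with the inert lower-right block carried along.
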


\begin{proof}
Following from the calculation in \cite[Lemma 6.4]{MR2763736}, for any 
    \begin{align*}
        x=\left(\begin{smallmatrix}
            x_{11} & x_{12} & x_{13} & 0 & 0 & 0 \\ x_{21} & x_{22} & x_{23} & 0 & 0 & 0 \\ x_{31}& x_{32}  & 0 & 0 & 0 & 0 \\ 0 & 0 & 0 & 1 & 0 & 0 \\ 0 &0& 0 & 0 & 0 & 1 \\ 0 & 0 & 0 & 0 & 1 & 0
        \end{smallmatrix}\right)\in \FZ_{6,2}^{\prime}\setminus \FZ_{6,3}^{\prime}
    \end{align*}
    and write 
    \begin{align*}
        u(x,t)=\left(\begin{smallmatrix}
            1 & 0 & x_{13}t & 0 & 0 & 0 \\ 0 & 1 & x_{23}t & 0 & 0 & 0 \\ 0 & 0  & 1 & 0 & 0 & 0 \\ 0 & 0 & 0 & 1 & 0 & 0 \\ 0 & 0 & 0 & 0 & 1 & 0 \\ 0 & 0 & 0 & 0 & 0 & 1
        \end{smallmatrix}\right)\quad\mathrm{and}\quad v(x,t)=\left(\begin{smallmatrix}
            1 & 0 & 0 & 0 & 0 & 0 \\ 0 & 1 & 0 & 0 & 0 & 0 \\ tx_{31} & tx_{32}  & 1 & 0 & 0 & 0 \\ 0 & 0 & 0 & 1 & 0 & 0 \\ 0 & 0 & 0 & 0 & 1 & 0 \\ 0 & 0 & 0 & 0 & 0 & 1
        \end{smallmatrix}\right).
    \end{align*}
    Then $u(x,t)x=xv(x,t)$. Since $x_{13}\neq x_{31}$, we have
    \begin{align*}
        \chi(u(x,t),v(x,t)^{-\tau})=\psi_F(x_{13}t-x_{31}t)\neq 1
    \end{align*}
    for a suitable chosen $t\in F$. Thus the lemma follows from Theorem \ref{thm 2.1} and Remark \ref{rem .2} as before.
\end{proof}

The case of $i=3$ is given by 
\begin{lem}
    $\mathcal{D}^{'}_{\widetilde{\chi}}(Z_{6,3}\setminus Z_{6,4})=0$.
\end{lem}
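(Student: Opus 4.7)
The plan is to mirror the proof of Lemma \ref{Z42} essentially verbatim, since the definition of $\mathfrak{Z}_{6,3}^{\prime}\setminus\mathfrak{Z}_{6,4}^{\prime}$ is chosen precisely to force a symmetry condition ($x_{13}=x_{31}\neq 0$) that makes Lemma \ref{lem 2.7} applicable. First I would identify an explicit $H$-slice of $Z_{6,3}\setminus Z_{6,4}$. Following the description of $\mathfrak{Z}_{6,2}^{\prime}$ as an $H$-slice of $Z_{6,2}$ and the definition of $\mathfrak{Z}_{6,3}^{\prime}$, one obtains a slice consisting of matrices of the shape
\[
\left(\begin{smallmatrix}
x_{11} & x_{12} & a & 0 & 0 & 0 \\
x_{21} & x_{22} & x_{23} & 0 & 0 & 0 \\
a & x_{32} & 0 & 0 & 0 & 0 \\
0 & 0 & 0 & 0 & 1 & 0 \\
0 & 0 & 0 & 1 & 0 & 0 \\
0 & 0 & 0 & 0 & 0 & 1
\end{smallmatrix}\right),\qquad a\in F^\times,
\]
and by a further reduction analogous to the passage from $\mathfrak{Z}_{4,1}$ to its slice in the proof of Lemma \ref{Z42}, I would cut this down (using the remaining diagonal torus action inside $H$) to a relatively $H$-stable subslice whose elements are symmetric matrices.

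The key observation is then exactly as in Lemma \ref{Z42}: since every element $x$ of this slice satisfies $x^\tau=x$, one may apply Lemma \ref{lem 2.7} with $h_0$ the identity (so that $\chi(h_0)=1$ and $h_0 x=\tau(x)$). This forces every $(H,\chi)$-equivariant distribution on each $H$-orbit inside $Z_{6,3}\setminus Z_{6,4}$ to be $\tau$-invariant. Because $\widetilde{\chi}(\tau)=-1$, any such $\tau$-invariant distribution that is also $(\widetilde H,\widetilde\chi)$-equivariant must vanish. Applying the Gelfand-Kazhdan constructibility theorem (Theorem \ref{thm 2.2}, together with Remark \ref{rem 2.3}(1) which guarantees constructibility of the algebraic action) then yields $\mathcal{D}^{\prime}_{\widetilde\chi}(H\mathfrak{Z}_{6,3}^{\prime}\setminus H\mathfrak{Z}_{6,4}^{\prime})=0$, and Lemma \ref{lem 2.2} transfers this vanishing from the slice back to $Z_{6,3}\setminus Z_{6,4}$.

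The only non-trivial point is verifying that the candidate slice above really is an $H$-slice of $Z_{6,3}\setminus Z_{6,4}$ (i.e.\ that the action map is a surjective submersion) and that the symmetric-matrix locus inside it is relatively $H$-stable; both are routine linear algebra checks that follow the template of the corresponding verifications in \cite[Section 6]{MR2763736} for the Archimedean version of this same stratum, and I would simply invoke the parallel lemma there rather than redo the block computations. The main obstacle, if any, is organizational: ensuring the torus twist used to symmetrize $x_{23},x_{32}$ and $x_{12},x_{21}$ (and absorb the leftover scalars into $a$) really lies inside the stabilizer of $\chi$ on the slice, so that the $\tau$-symmetry argument via Lemma \ref{lem 2.7} is available on the full orbit. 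Once that bookkeeping is in place, the vanishing is immediate.
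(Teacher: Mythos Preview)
Your proposal is correct and follows essentially the same approach as the paper: the paper invokes \cite[Lemma 6.5]{MR2763736} to show that every $H$-orbit in $Z_{6,3}\setminus Z_{6,4}$ contains a symmetric representative, then applies Lemma \ref{lem 2.7} with $h_0=1$ and concludes via Remark \ref{rem 2.3}, exactly as you outline. Your slice language is a slightly more roundabout packaging of the same argument (and note that $H\mathfrak{Z}_{6,3}^{\prime}\setminus H\mathfrak{Z}_{6,4}^{\prime}=Z_{6,3}\setminus Z_{6,4}$ by definition, so the final appeal to Lemma \ref{lem 2.2} is not needed), but the substance is identical.
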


\begin{proof}
On $Z_{6,3} \setminus Z_{6,4}$, by Lemma 6.5 in \cite{MR2763736}, every matrix in $Z_{6,3} \setminus Z_{6,4}$  is in the same $H$ orbit as a matrix of the form
\begin{align*}
\left(\begin{smallmatrix}
    0 & 0 & a & 0 & 0 & 0\\
    0 & x_{22} & 0 & 0 & 0 & 0\\
    a & 0 & 0 & 0 & 1 & 0\\
    0 & 0 & 0 & 1 & 0 & 0\\
    0 & 0 & 0 & 0 & 0 & 1
\end{smallmatrix}\right).
\end{align*}
Since this matrix is symmetric, we can take $h$ to be the identity in Lemma \ref{lem 2.7}. Then each $H$ orbit in $Z_{6,3} \setminus Z_{6,4}$ is $\tau$-stable and any $(H,\chi)$ invariant distribution on this $H$ orbit will be $\tau$-invariant. In other words, every $(\widetilde{H},\widetilde{\chi})$-equivariant distribution on each $H$ orbit will vanish. By Remark \ref{rem 2.3}, 
we must have that $\mathcal{D}^{'}_{\widetilde{\chi}}(Z_{6,3}\setminus Z_{6,4})=0$.
\end{proof}

The case of $i=4$ is given by 
\begin{lem}
    $\mathcal{D}^{'}_{\chi}(Z_{6,4}\setminus Z_{6,5})=0$.
\end{lem}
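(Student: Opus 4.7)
The plan is to mimic the strategy of Lemma \ref{Z43} (the treatment of $\FZ_{4,2}\setminus(\FZ_{4,3}^{1\;'}\cup \FZ_{4,3}^{2\;'})$ in the $Z_4$ case). Each orbit in $Z_{6,4}\setminus Z_{6,5}$ has a representative in the $H$-slice $\FZ_{6,4}''\setminus \FZ_{6,5}''$, which is characterized by the single inequation $x_{23}\neq x_{32}$. The idea is to exhibit a pair $(u(x,t),v(x,t))$ of unipotent matrices with $u(x,t)\in S$ and $v(x,t)^{-\tau}\in S$ such that $u(x,t)x=xv(x,t)$ (so that $(u(x,t),v(x,t)^{-\tau})$ stabilizes $x$ in $H$) and such that the character $\chi(u(x,t),v(x,t)^{-\tau})$ detects the non-vanishing of $x_{23}-x_{32}$. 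Once such a pair is in hand, Theorem \ref{thm 2.1} together with Remark \ref{rem .2} forces the vanishing of every $(H,\chi)$-equivariant distribution on each orbit in $Z_{6,4}\setminus Z_{6,5}$, which yields the desired conclusion.

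Because the condition cutting out $\FZ_{6,5}''$ involves only the single equation $x_{23}=x_{32}$ (in contrast with the two equations cutting out $\FZ_{4,3}^{1\;'}\cup \FZ_{4,3}^{2\;'}$ in Lemma \ref{Z43}), the construction simplifies considerably, and a suitable pair can be given by the simple transvections
$$u(x,t)\;=\;I_6+tE_{23},\qquad v(x,t)\;=\;I_6+tx_{23}^{-1}x_{32}\,E_{32}.$$
The matrix $u(x,t)$ belongs to $U_{2^3}\subset S$, with its $b$-block equal to $\left(\begin{smallmatrix}t&0\\0&0\end{smallmatrix}\right)$, so that $\chi_S(u(x,t))=\psi_F(t)$; analogously, $v(x,t)^{-\tau}=I_6-tx_{23}^{-1}x_{32}\,E_{23}$ lies in $U_{2^3}\subset S$ with $\chi_S(v(x,t)^{-\tau})=\psi_F(-tx_{23}^{-1}x_{32})$. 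The intertwining $u(x,t)x=xv(x,t)$ follows from the identity $x_{23}\cdot(tx_{23}^{-1}x_{32})=tx_{32}$: the left-hand side adds $t$ times row~$3$ of $x$ to row~$2$, while the right-hand side adds $tx_{23}^{-1}x_{32}$ times column~$3$ of $x$ to column~$2$; both operations alter only the entry $(2,2)$ of $x$, and both produce $tx_{32}$ there.

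Consequently, the combined character evaluates to
$$\chi(u(x,t),v(x,t)^{-\tau})\;=\;\psi_F\!\left(t-tx_{23}^{-1}x_{32}\right)\;=\;\psi_F\!\left(tx_{23}^{-1}(x_{23}-x_{32})\right),$$
which is non-trivial for suitable $t\in F$ because $\psi_F$ is non-trivial and $x_{23}^{-1}(x_{23}-x_{32})\neq 0$. Since $(u(x,t),v(x,t)^{-\tau})$ is a unipotent element of the stabilizer of $x$ in $H$, Theorem \ref{thm 2.1} and Remark \ref{rem .2} then yield $\mathcal{D}^{'}_{\chi}(Z_{6,4}\setminus Z_{6,5})=0$.

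The main obstacle is isolating a pair of transvections satisfying all three requirements simultaneously: that both $u$ and $v^{-\tau}$ live in $S$, that they intertwine via $x$, and that the resulting character is sensitive to $x_{23}-x_{32}$. The pair above is natural because $x$ has its anti-diagonal entries $x_{23},x_{32}$ in the central $2\times 2$ block, so that the row operation $E_{23}$ and the column operation $E_{32}$ couple to these entries in a matching way; the ratio $x_{32}/x_{23}$ in the coefficient of $v(x,t)$ is exactly what is needed to balance the intertwining relation, and the mismatch between the resulting $\chi_S$ values is precisely the desired factor $x_{23}^{-1}(x_{23}-x_{32})$.
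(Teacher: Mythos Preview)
Your intertwining relation $u(x,t)\,x = x\,v(x,t)$ is correct, but your evaluation of the character is not, and this is fatal to the argument. The matrix $E_{23}$ sits in row~$2$ and column~$3$ of the $6\times 6$ matrix; under the block partition $\{1,2\}\cup\{3,4\}\cup\{5,6\}$ this is the $(2,1)$-entry of the $b$-block, not the $(1,1)$-entry. Hence the $b$-block of $u(x,t)=I_6+tE_{23}$ is $\left(\begin{smallmatrix}0&0\\ t&0\end{smallmatrix}\right)$, whose trace is $0$, and therefore $\chi_S(u(x,t))=\psi_F(0)=1$. The same computation gives $\chi_S(v(x,t)^{-\tau})=1$. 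Consequently $\chi(u(x,t),v(x,t)^{-\tau})=1$ identically, and Theorem~\ref{thm 2.1} yields no information.

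The paper's proof avoids this trap by choosing $(u,v)$ that do not lie purely in $U_{2^3}$ but use the diagonally embedded $\GL_2$ as well: the element $a=\left(\begin{smallmatrix}1&0\\ t&1\end{smallmatrix}\right)$ appears in all three diagonal blocks of $u(x,t)$, and the accompanying off-diagonal blocks are arranged so that the $b$-block (after factoring out $a$) acquires a nonzero diagonal entry $x_{11}x_{32}^{-1}t$. This is what produces the character value $\psi_F\bigl((x_{32}^{-1}-x_{23}^{-1})x_{11}t\bigr)$, which is nontrivial precisely when $x_{23}\neq x_{32}$ (note that $x_{11}\neq 0$ since $x$ is invertible). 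In short, a simple transvection in $U_{2^3}$ cannot detect $x_{23}-x_{32}$ through $\chi$, because $\psi_{U_{2^3}}$ only sees the traces of $b$ and $c$; you need to entangle the $\GL_2^\Delta$-part with the unipotent part to produce a diagonal contribution, exactly as the paper does.
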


\begin{proof}
  Following from the calculation in \cite[Lemma 6.6]{MR2763736}, for any 
    \begin{align*}
        x=\left(\begin{smallmatrix}
            x_{11} & 0 & 0 & 0 & 0 & 0 \\ 0 & 0 & x_{23} & 0 & 0 & 0 \\ 0 & x_{32}  & 0 & 0 & 0 & 0 \\ 0 & 0 & 0 & 0 & 1 & 0 \\ 0 & 0 & 0 & 1 & 0 & 0 \\ 0 & 0 & 0 & 0 & 0 & 1
        \end{smallmatrix}\right)\in \FZ_{6,4}^{''}\setminus \FZ_{6,5}^{''}
    \end{align*}
    and write 
    \begin{align*}
        u(x,t)=\left(\begin{smallmatrix}
            1 & 0 & x_{11}x_{32}^{-1}t & 0 & 0 & 0 \\ t & 1 & 0 & 0 & x_{23}t & 0 \\ 0 & 0  & 1 & 0 & 0  & 0 \\ 0 & 0 & t & 1 & 0& t \\ 0 & 0 & 0 & 0 & 1 & 0 \\ 0 & 0 & 0 & 0 & t & 1
        \end{smallmatrix}\right)\quad\mathrm{and}\quad v(x,t)=\left(\begin{smallmatrix}
            1 & t & 0 & 0 & 0 & 0 \\ 0 & 1 & 0 & 0 & 0 & 0 \\ x_{23}^{-1}x_{11}t & 0  & 1 & t & 0 & 0 \\ 0 & 0 & 0 & 1 & 0 & 0 \\ 0 & tx_{32} & 0 & 0 & 1 & t \\ 0 & 0 & 0 & t & 0 & 1
        \end{smallmatrix}\right).
    \end{align*}
    Then $u(x,t)x=xv(x,t)$, and $\chi(u(x,t),v(x,t)^{-\tau})=\chi_F((x_{32}^{-1}-x_{23}^{-1})x_{11}t)\neq 1$ for a suitable chosen $t\in F$. Then the lemma follows from Theorem \ref{thm 2.1}
    and Remark \ref{rem .2} as before.
\end{proof}

The case of $i=5$ is given by 
\begin{lem}
    $\mathcal{D}^{'}_{\widetilde{\chi}}(Z_{6,5})=0$.
\end{lem}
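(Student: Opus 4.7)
The plan is to adapt verbatim the argument used for the case $i=3$ (the lemma $\mathcal{D}^{'}_{\widetilde{\chi}}(Z_{6,3}\setminus Z_{6,4})=0$). The decisive feature there was that every representative in the relevant $H$-slice was a symmetric matrix, so that each orbit is $\tau$-stable, and the combination of $\chi$-equivariance with the sign $\widetilde{\chi}(\tau)=-1$ forces the vanishing of any candidate distribution. No explicit construction of $u(x,t)$ and $v(x,t)$ will be needed in this final step.

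The same feature holds here. Imposing $x_{23}=x_{32}$ on the slice $\mathfrak{Z}_{6,4}^{''}$ produces matrices of the shape
\begin{align*}
g_0 = \left(\begin{smallmatrix}
    x_{11} & 0 & 0 & 0 & 0 & 0 \\
    0 & 0 & a & 0 & 0 & 0 \\
    0 & a & 0 & 0 & 0 & 0 \\
    0 & 0 & 0 & 0 & 1 & 0 \\
    0 & 0 & 0 & 1 & 0 & 0 \\
    0 & 0 & 0 & 0 & 0 & 1
\end{smallmatrix}\right), \qquad x_{11},a \in F^{\times},
\end{align*}
which manifestly satisfy $g_0^{\tau}=g_0$. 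Thus in Lemma \ref{lem 2.7} I may take $h_0=(e,e)\in H$; then $h_0 g_0 = g_0 = \tau(g_0)$ and $\chi(h_0)=1$, and the lemma yields that any $(H,\chi)$-equivariant distribution on an individual $H$-orbit $\mathcal{O}\subset Z_{6,5}$ is automatically $\tau$-invariant. Since $\widetilde{\chi}(\tau)=-1$ forces a $(\widetilde{H},\widetilde{\chi})$-equivariant distribution $T$ to satisfy $\tau T = -T$, combining the two conditions yields $T=0$ on each orbit $\mathcal{O}$.

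To pass from vanishing on each orbit to vanishing on all of $Z_{6,5}$, I would invoke the Gelfand-Kazhdan Constructability Theorem (Theorem \ref{thm 2.2}) together with Remark \ref{rem 2.3}, which guarantees that the action of the algebraic group $H$ on the variety $Z_{6,5}$ is constructive at the level of $F$-points and that the $(\widetilde{H},\widetilde{\chi})$-equivariance of $T$ is captured as an $\mathcal{F}$-invariance in the sense of Theorem \ref{thm 2.2}. The main (and only) obstacle is conceptual rather than computational: one must simply verify that the symmetry $x_{23}=x_{32}$ indeed makes every representative $g_0$ symmetric, which is immediate from the explicit shape of $\mathfrak{Z}_{6,5}^{''}$; the technical machinery of wavefront sets or descent that was required in the treatments of $G^2_{\mathrm{open},1}$, $G^2_{\mathrm{open},2}$ and of the earlier strata of $Z_6$ is not invoked here at all.
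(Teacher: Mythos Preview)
Your proposal is correct and follows essentially the same approach as the paper: both observe that the $H$-slice $\mathfrak{Z}_{6,5}^{''}$ consists of symmetric matrices, apply Lemma~\ref{lem 2.7} with $h_0=(e,e)$ to conclude that any $(H,\chi)$-equivariant distribution on each $H$-orbit is $\tau$-invariant, and then use the Gelfand--Kazhdan constructability (Theorem~\ref{thm 2.2} and Remark~\ref{rem 2.3}) to pass from orbits to all of $Z_{6,5}$. The paper's proof is terser (it simply says ``by the same argument'' referring to the $Z_{6,3}\setminus Z_{6,4}$ case), but the logical content is identical to what you have written.
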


\begin{proof}
On $Z_{6,5}$, by the definition in \cite{MR2763736}, every matrix in $Z_{6,5}$ is in the same $H$ orbit as a matrix of the form
\begin{align*}
\left(\begin{smallmatrix}
    x_{11} & 0 & 0 & 0 & 0 & 0\\
    0 & 0 & a & 0 & 0 & 0\\
    0 & a & 0 & 0 & 1 & 0\\
    0 & 0 & 0 & 1 & 0 & 0\\
    0 & 0 & 0 & 0 & 0 & 1
\end{smallmatrix}\right).
\end{align*}
This is again a symmetric matrix. Thus, by the same argument, we must have that $\mathcal{D}^{'}_{\widetilde{\chi}}(Z_{6,5})=0$.
\end{proof}

Therefore, the assertion in \eqref{Z0-5} has been completely proved. This completes the proof of the assertion in \eqref{G43} and Proposition \ref{0=TG4}. 
Combining with Proposition \ref{0=TG4} with Proposition \ref{0=TG3}, we obtain that 
\begin{align}\label{G40}
    \mathcal{D}^{'}_{\widetilde{\chi}}(G^4_{\mathrm{open}})=0.
\end{align}

\section{Proof of Proposition \ref{0=TG6}}\label{sec-TG6}

We prove Proposition \ref{0=TG6}, namely, the following vanishing property
    \begin{align}\label{0=TG512}
        T|_{G^5_{\mathrm{open},1}}=0\quad {\rm and}\quad T |_{G^5_{\mathrm{open},2}}=0
    \end{align}
for any $T \in \D^{\prime}(G)_{\widetilde{\chi}}$, which is $\mathfrak{z}$-finite as in Theorem \ref{prop 3.2}. This will be done separately.

\subsection{The case of $G^{5}_{\mathrm{open},1}$}\label{section 3211}

Because of the vanishing in \eqref{G40}, to prove that $T|_{G^5_{\mathrm{open},1}}=0$, it suffices to prove that $\mathcal{D}^{'}_{\chi}(G^{5}_{\mathrm{open},1}\setminus G^4_{\mathrm{open}})=0$. 
We are going to use the same argument as that in Section \ref{section 4.4} by using the structure of wavefront set of $T$. Here is the analogy of Proposition \ref{prop 4.4} for the current case. 

\begin{prop}\label{lem 4.4}
    Let $T^{\prime} \in \mathcal{D}^{\prime}_{\chi}(G^5_{\mathrm{open},1})$ be a distribution such that
    \begin{align*}
        \wf_x(T^{\prime})\subset\mathcal{N}, \; \mathrm{for} \; \mathrm{all} \; x\in G^5_{\mathrm{open},1}.
    \end{align*}
    If $T^{\prime} |_{G^4_{\mathrm{open}}} =0 $, then $T^{\prime} =0$.
\end{prop}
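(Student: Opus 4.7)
The plan is to replicate the two-step strategy used in Section \ref{section 4.4} to prove Proposition \ref{prop 4.4}, now transplanted to the open submanifold $G^5_{\mathrm{open},1}$. The hypothesis $T^{\prime}|_{G^4_{\mathrm{open}}} = 0$, together with the exact sequence \eqref{2.4} applied to the open inclusion $G^4_{\mathrm{open}} \subset G^5_{\mathrm{open},1}$, forces $\mathrm{supp}(T^{\prime})$ into the closed complement $G^5_{\mathrm{open},1} \setminus G^4_{\mathrm{open}}$, which is the single stratum $G_R$ with rank matrix $R = \begin{pmatrix} 3 & 1 \\ 2 & 1 \end{pmatrix}$. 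I would first choose a convenient $H$ slice $\mathfrak{W}$ of $G_R$ in block form, analogous to $\mathfrak{Z}_4$, and then build a nested filtration $\mathfrak{W} \supset \mathfrak{W}_1 \supset \mathfrak{W}_2 \supset \mathfrak{W}_3$ of relatively $H$-stable closed submanifolds cut out by symmetry conditions on appropriate matrix entries, mirroring the chain $\mathfrak{Z}_4 \supset \mathfrak{Z}_{4,1} \supset \mathfrak{Z}_{4,2} \supset H\mathfrak{Z}_{4,3}^{2\,\prime}$.

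On each open stratum $\mathfrak{W}_{i-1} \setminus \mathfrak{W}_i$ of this filtration I would produce explicit unipotent pairs $u(x,t), v(x,t) \in H$ with $u(x,t)\,x = x\,v(x,t)$ and $\chi(u(x,t), v(x,t)^{-\tau}) \ne 1$ for suitable $t \in F$, so that Theorem \ref{thm 2.1} together with Remark \ref{rem .2} kills the $(H,\chi)$-equivariant distributions there; alternatively, on a stratum where every representative is a symmetric matrix, Lemma \ref{lem 2.7} kills the $(\widetilde{H},\widetilde{\chi})$-equivariant distributions. Iterating through the filtration, and combining the results via Lemma \ref{lem 2.1}, confines $\mathrm{supp}(T^{\prime})$ to the smallest surviving stratum $\mathfrak{W}_{\min}$, which plays the role of $H\mathfrak{Z}_{4,3}^{2\,\prime}$.

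For the final step, assume $T^{\prime} \ne 0$ and pick $x \in \mathfrak{W}_{\min} \cap \mathrm{supp}(T^{\prime})$, so that $(x,0) \in \wf_x(T^{\prime})$ by the definition of the wavefront set. Theorem \ref{thm 2.12} then gives the inclusion $\cn^{G^5_{\mathrm{open},1}}_{H\mathfrak{W}_{\min},\, x} \subset \wf_x(T^{\prime})$. I would explicitly exhibit an element $x^{\prime}$ of this conormal fiber such that, after the identification of $x^{-1}T_x(H\mathfrak{W}_{\min})$ with a subspace of $T_e(G) = \mathfrak{g}$ via right translation and the trace pairing on $\mathfrak{g}^*$, the matrix $x^{\prime}x$ has a nonzero characteristic polynomial coefficient, hence is not in the nilpotent cone $\mathcal{N}$. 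This contradicts the assumption $\wf_x(T^{\prime}) \subset \mathcal{N}$. The main obstacle lies precisely here: the stratum $G_R$ has the top-left $4\times 4$ block dropping to rank $3$, so the tangent space $T_x(H\mathfrak{W}_{\min})$ is strictly more constrained than in the $G^2_{\mathrm{open},1}$ case handled in Section \ref{section 4.4}, and finding a normal direction whose right-translate is manifestly non-nilpotent requires a case-specific computation. I would follow the analogue construction in \cite[Section 7]{MR2763736}, verify that it lies in the orthogonal complement of $T_x(H\mathfrak{W}_{\min})$ with respect to the trace form, and check directly that $x^{\prime}x$ is non-nilpotent.
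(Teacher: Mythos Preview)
Your two-step strategy (reduce the support by unipotent $\chi$-incompatibility, then derive a contradiction from the conormal bundle and the nilpotent-cone bound on the wavefront set) is exactly the paper's approach. Two points of divergence are worth noting.

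First, the paper's filtration is much shorter than the one you sketch: there is a \emph{single} cut. The stratum $G_R$ is parametrized by an orbit-representative set $\mathfrak{G}_R$ in four $\GL_2$-valued parameters $x,y,z,w$, and one closed subset $\mathfrak{G}_R'$ (defined by $y_{11}=z_{22}=0$ and $y_{12}z_{21}=1$) is carved out. A single unipotent pair $(u,v)$ kills $\mathcal{D}'_\chi$ on $H(\mathfrak{G}_R\setminus\mathfrak{G}_R')$, after which the wavefront argument is applied directly on $H\mathfrak{G}_R'=H\mathfrak{G}_R''$ (the latter an explicit coordinate rewriting). The paper then writes down a concrete $x'$, checks $T_x(H\mathfrak{G}_R'')=T_x(\mathfrak{G}_R'')+\Lie(S)x+x\Lie(S)^\tau\subset(Fx')^{\perp}$ by hand, and observes that $x'x$ has eigenvalues $\pm 1$, hence is not nilpotent.

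Second, and more substantively: your fallback of invoking Lemma~\ref{lem 2.7} on a ``symmetric'' stratum is not available here. The hypothesis of Proposition~\ref{lem 4.4} is only $T'\in\mathcal{D}'_{\chi}(G^5_{\mathrm{open},1})$, not $\mathcal{D}'_{\widetilde{\chi}}$. Lemma~\ref{lem 2.7} shows that an $(H,\chi)$-equivariant distribution on such an orbit is $\tau$-invariant, which forces a $(\widetilde{H},\widetilde{\chi})$-equivariant distribution to vanish but says nothing about an $(H,\chi)$-equivariant one. If you tried to mirror the $Z_4$ filtration literally, the analogues of Lemmas~\ref{Z42} and~\ref{Z44} would fail for this reason. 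The paper sidesteps the issue entirely: the one-step unipotent reduction lands directly on a stratum where the conormal argument finishes the proof, so $\widetilde{\chi}$-equivariance is never needed.
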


Recall from \eqref{G4op} and \eqref{G5op1} that
$G^5_{\mathrm{open},1}\setminus G^4_{\mathrm{open}}=G_R$ with the rank-matrix $R=\begin{pmatrix}
    3 & 1 \\ 2 & 1
\end{pmatrix}$. 
The proof of Proposition \ref{lem 4.4} takes two steps as that of Proposition \ref{prop 4.4}. The first step is to understand the support of the distribution $T'$ as given in Proposition \ref{lem 4.4} and 
the second step is to apply the wavefront set condition to the possible support of $T'$. 

Note that every matrix in the particularly given $G_R$ is in the same $H$-orbit as an element in $\mathfrak{G}_R$ that is of the form: 
\begin{align*}
    g=\begin{pmatrix}
        x & 0 & 0 \\ 0 & 1_2 & 0 \\0 & 0 & y
    \end{pmatrix}\begin{pmatrix}
        E_{11} & E_{22} & 0 \\ 0 & E_{11} & E_{22} \\ E_{22} & 0 & E_{11}
    \end{pmatrix}\begin{pmatrix}
        z & 0 & 0 \\ 0 & 1_2 & 0 \\ 0 & 0 & w
    \end{pmatrix}.
\end{align*}
Write 
\begin{align*}
    \mathfrak{G}_R^{'}=\left\{ g= \begin{pmatrix}
        x & 0 & 0 \\ 0 & 1_2 & 0 \\0 & 0 & y
    \end{pmatrix}\begin{pmatrix}
        E_{11} & E_{22} & 0 \\ 0 & E_{11} & E_{22} \\ E_{22} & 0 & E_{11}
    \end{pmatrix}\begin{pmatrix}
        z & 0 & 0 \\ 0 & 1_2 & 0 \\ 0 & 0 & w
    \end{pmatrix} :\;y_{11}=z_{22}=0, \;y_{12}z_{21}=1         \right\}.
\end{align*}
On $H(\mathfrak{G}_R \setminus \mathfrak{G}_R^{'})$, by taking
\small{\begin{align*}
    u=\begin{pmatrix}
        1_2 & 0 & x\begin{pmatrix}
            0 & 0 \\ 0 & d_{22}
        \end{pmatrix}y^{-1}
     \\ 0 & 1_2 & \begin{pmatrix}
        0 & c_{12} \\ 0 &c_{22}
    \end{pmatrix}y^{-1} \\ 0 & 0 & 1_2\end{pmatrix}\quad {\rm and}\quad 
    v=\begin{pmatrix}
        1_2 & 0 & 0 \\ \begin{pmatrix}
            0 & c_{12} \\ 0 & d_{22}
        \end{pmatrix}z & 1_2 & 0 \\ w^{-1}\begin{pmatrix}
            0 & 0 \\ 0 & c_{22}
        \end{pmatrix}z & 0&  1_2
    \end{pmatrix}, 
\end{align*}}
we have that  $ug=gv$, i.e. $(u,v^{-\tau})g=g$. Since $g\notin \mathfrak{G}_R^{'}$,
\begin{align*}
    \chi(u,v^{-\tau})=\psi\left(-\frac{y_{12}}{\det y}c_{12}+\frac{y_{11}}{\det y} c_{11}-z_{21}c_{12}-z_{22}d_{22}\right)\neq 1,
\end{align*}
for suitable chosen $c_{12},c_{22}$ and $d_{22}$.
By Remark \ref{rem .2}, we educe that 
\begin{align}\label{GR-R}
    \mathcal{D}^{\prime}_{\chi}(H(\mathfrak{G}_R \setminus \mathfrak{G}_R^{'}))=0.
\end{align}

Let $T^{\prime} \in \mathcal{D}^{\prime}_{\chi}(G^5_{\mathrm{open},1})$ be distribution which vanishes on $G^4_{\mathrm{open}}$ as in Proposition \ref{lem 4.4}. Consider 
\[
T^{\prime} |_{G^4_{\mathrm{open}} \sqcup H(\mathfrak{G}_R \setminus \mathfrak{G}_R^{'})},
\]
which is the restriction of $T^{\prime}$ to the open submanifold $G^4_{\mathrm{open}} \sqcup H(\mathfrak{G}_R \setminus \mathfrak{G}_R^{'})$ of $G^5_{\mathrm{open},1}$. Since $T^{\prime}$ vanishes on $G^4_{\mathrm{open}}$, by the exact sequence in \eqref{2.4} and the vanishing in \eqref{GR-R}, we obtain that 
\begin{align*}
    T^{\prime} |_{G^4_{\mathrm{open}} \sqcup H(\mathfrak{G}_R \setminus \mathfrak{G}_R^{'})} \in \mathcal{D}^{\prime}_{\chi}(H(\mathfrak{G}_R \setminus \mathfrak{G}_R^{'}))=0.
\end{align*}
It follows that  $T^{\prime}$ must be supported in the closed submanifold $H\mathfrak{G}_R^{\prime}$.

In order to prove that $T^{\prime} =0$ on $H\mathfrak{G}_R^{\prime}$, we use the wavefront set condition. We write $\mathfrak{G}_R^{''}$ to be the set of elements in the particularly given $G_R$ of the following form:
\begin{align}\label{4.1}
    x=\left(\begin{smallmatrix}
        x_{11} & x_{12} & 0 & x_{14} & 0 & 0 \\x_{21} & x_{22} & 0 & x_{24} & 0 & 0 \\ 0 & 0& 1 & 0 & 0 & 0 \\0 & 0  & 0 & 0 &x_{45} & x_{46} \\1 & 0 & 0 & 0 & 0 & 0 \\0 & 0 & 0 & 0& x_{65} & x_{66}.
    \end{smallmatrix}\right).
\end{align}
One checks by direct computation that $H\mathfrak{G}_R^{'}=H\mathfrak{G}_R^{''}$.

Assume that $T^{\prime} \neq0 $ on $H\mathfrak{G}_R^{\prime}$. Since $T^{\prime}$ is $H$-equivariant, there exists $x \in \mathfrak{G}_R^{''}$ as in \eqref{4.1} such that  $x \in \supp(T^{\prime})$. Then from the definition of the wave front set, we have
$(x,0) \in \wf_x(T^{\prime}).$ 
By Theorem \ref{thm 2.12}, this implies the conormal bundles have the property:
\begin{align*}
    \cn^{G^5_{\mathrm{open},1}}_{H\mathfrak{G}_R^{'},x}=\cn^G_{H\mathfrak{G}_R^{'},x} \subset \wf_x(T^{\prime}).
\end{align*}
Take 
\begin{align*}
    x'=\left(\begin{smallmatrix}
        0 & 0 & 0 & 0 & 1 & 0 \\0 & 0 & 0 & 0 & 1 & 0 \\ 0 & 0& -1 & 0 & 0 & 0 \\0 & 0  & -1 & 0 &0 & 0\\0 & 0 & 0 & 0 & 0 & 0 \\0 & 0 & 0 & 0& 0 & 0
    \end{smallmatrix}\right). 
\end{align*}
We have that 
\begin{align*}
    x^{'}x=\left(\begin{smallmatrix}
         1 & 0 & 0 & 0 & 0 & 0 \\1 & 0 & 0 & 0 & 0 & 0 \\ 0 & 0& -1 & 0 & 0 & 0 \\0 & 0  & -1 & 0 &0 & 0\\0 & 0 & 0 & 0 & 0 & 0 \\0 & 0 & 0 & 0& 0 & 0
    \end{smallmatrix}\right)\quad {\rm and}\quad 
    xx^{'}=\left(\begin{smallmatrix}
        0 & 0 & -x_{14} & 0 & x_{11}+x_{12} & 0\\
        0 & 0 & -x_{24} & 0 & x_{21}+x_{22} & 0\\
        0 & 0 & -1 & 0 & 0 & 0\\
        0 & 0 & 0 & 0 & 0 & 0\\
        0 & 0 & 0 & 0 & 1 & 0\\
         0 & 0 & 0 & 0 & 0 & 0
    \end{smallmatrix}\right),
\end{align*}
which shows 
\begin{align*}
    T_x(H\mathfrak{G}_R^{''})=T_x(\mathfrak{G}_R^{''})+\Lie(S)x+x(\Lie(S))^{\tau} \subset (Fx^{\prime})^{\bot}.
\end{align*}
Note $x^{\prime}x$ is not nilpotent. As before, this contradicts the assumption of Proposition \ref{lem 4.4} on the structure of the wavefront set of $T'$. 
Therefore we must have that $T^{\prime}=0$. We are done.

\subsection{The case of  $G^{5}_{\mathrm{open},2}$}\label{ssec-G52}
Since $G^{5}_{\mathrm{open},2}\setminus G^4_{\mathrm{open}}=G_R$ with the rank-matrix $R=\begin{pmatrix}
    3 & 2 \\ 1 & 1
\end{pmatrix}$, which is the transpose of the rank-matrix $\begin{pmatrix}
    3 & 1 \\ 2 & 1
\end{pmatrix}$. Then a parallel argument as in Section \ref{section 3211} shows that the following proposition holds.
\begin{prop}
    Let $T^{\prime} \in \mathcal{D}^{\prime}_{\chi}(G^5_{\mathrm{open},2})$ be a distribution such that
    \begin{align*}
        \wf_x(T^{\prime})\subset\mathcal{N}, \; \mathrm{for} \; \mathrm{all} \; x\in G^5_{\mathrm{open},2}.
    \end{align*}
    If $T^{\prime} |_{G^4_{\mathrm{open}}} =0 $, then $T^{\prime} =0$.
\end{prop}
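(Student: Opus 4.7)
The strategy is to mimic Section \ref{section 3211} step by step, with all explicit constructions replaced by their transposed analogues dictated by $R = \begin{pmatrix} 3 & 2 \\ 1 & 1 \end{pmatrix}$ being the transpose of $\begin{pmatrix} 3 & 1 \\ 2 & 1 \end{pmatrix}$. From the hypothesis $T'|_{G^4_{\mathrm{open}}} = 0$ and the exact sequence \eqref{2.4}, $T'$ is supported in the single stratum $G^5_{\mathrm{open},2} \setminus G^4_{\mathrm{open}} = G_R$, so it suffices to show that any $T' \in \mathcal{D}^{\prime}_\chi(G_R)$ with $\wf_x(T') \subset \mathcal{N}$ vanishes.

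In the first step, I would select an $H$-slice $\mathfrak{G}_R$ of $G_R$ consisting of representatives analogous to those of Section \ref{section 3211} but with the roles of the upper-right and lower-left $2 \times 2$ blocks exchanged, and a relatively $H$-stable closed subslice $\mathfrak{G}_R' \subset \mathfrak{G}_R$ cut out by two scalar conditions that are the transposes of those used previously. For a generic representative $g \in \mathfrak{G}_R \setminus \mathfrak{G}_R'$, I would exhibit unipotent witnesses $(u, v) \in S \times S$ satisfying $u g = g v$ and $\chi(u, v^{-\tau}) \neq 1$, now with the roles of the $b$- and $c$-parameters of $U_{2^3}$ interchanged relative to Section \ref{section 3211}. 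Theorem \ref{thm 2.1} together with Remark \ref{rem .2} then yields $\mathcal{D}^{\prime}_\chi(H(\mathfrak{G}_R \setminus \mathfrak{G}_R')) = 0$, confining $T'$ to $H \mathfrak{G}_R'$.

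In the second step, I would describe $H \mathfrak{G}_R' = H \mathfrak{G}_R''$ via the explicit representatives obtained by transposing the block-structure of \eqref{4.1}. Assuming $T' \neq 0$, I pick a support point $x \in \mathfrak{G}_R''$ so that $(x, 0) \in \wf_x(T')$; by Theorem \ref{thm 2.12}, the entire conormal space $\mathrm{CN}^G_{H\mathfrak{G}_R', x}$ lies in $\wf_x(T')$. Constructing $x'$ as the transpose analogue of the conormal element from Section \ref{section 3211}, a direct computation then shows $x' \in \mathrm{CN}^G_{H\mathfrak{G}_R', x}$ while $x' x$ is semisimple with a non-zero eigenvalue, hence not nilpotent, which contradicts $\wf_x(T') \subset \mathcal{N}$ and forces $T' = 0$.

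The main obstacle is to translate the Section \ref{section 3211} constructions faithfully: because $S$ is upper-unitriangular and therefore not stable under the matrix transpose, one cannot simply pull everything back through $\tau$. Each unipotent pair $(u, v)$ must be re-constructed as a genuine element of $S \times S$ and the character value $\chi(u, v^{-\tau})$ recomputed directly from \eqref{GRchi}; similarly the conormal vector $x'$ must be adapted, and one must verify that its product with a representative support point remains semisimple with a non-zero eigenvalue. One would expect a cleaner conceptual argument via an anti-involution of $(G, S, \chi_S)$ that intertwines $G^5_{\mathrm{open},1}$ with $G^5_{\mathrm{open},2}$, but checking its existence and equivariance properties appears to reduce to essentially the same block-by-block computation.
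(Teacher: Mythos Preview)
Your proposal is correct and matches the paper's own treatment, which simply asserts that ``a parallel argument as in Section~\ref{section 3211}'' proves the proposition without spelling out the transposed computations.

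One remark: your caveat that ``one cannot simply pull everything back through $\tau$'' is overly cautious. The involution $\tau$ is already built into $\widetilde{H}$: it swaps the two $S$-factors of $H$, and since $\chi=\chi_S\otimes\chi_S$ is symmetric, $\tau^*$ preserves $(H,\chi)$-equivariance. Moreover $\tau$ interchanges $G^5_{\mathrm{open},1}$ and $G^5_{\mathrm{open},2}$ (the rank matrices are transposes), fixes $G^4_{\mathrm{open}}$ (all four of its rank matrices are symmetric), and by Lemma~\ref{wf-diff} together with the $\tau$-stability of the nilpotent cone it also preserves the condition $\wf_x\subset\mathcal{N}$. So one may reduce the present proposition directly to Proposition~\ref{lem 4.4} via $\tau^*$ without redoing any unipotent or conormal computation. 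That said, the explicit route you outline is equally valid and is what the paper's phrase ``parallel argument'' presumably intends.
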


Combining with Proposition \ref{lem 4.4}, we complete the proof of Proposition \ref{0=TG6}.

\section{Proof of Proposition \ref{0=TG-6}}\label{sec-G6}

We prove Proposition \ref{0=TG-6}, namely, the following vanishing property
\begin{align}\label{0=G-6}
    \mathcal{D}^{\prime}_{\widetilde{\chi}}(G \setminus G^6_{\mathrm{open}})=0.
\end{align}
Recall from Section \ref{ssec-SG}, we have 
\begin{align}\label{G-G6}
    G \setminus G^6_{\mathrm{open}}
=
\bigsqcup_R G_R
\end{align}
with $R$ running through all rank-matrices in Section \ref{ssec-SG} except the following six
\begin{align*}
     \begin{pmatrix}
        4 & 2 \\ 2 & 2
    \end{pmatrix},\begin{pmatrix}
        4 & 2 \\ 2 & 1
    \end{pmatrix},\begin{pmatrix}
        3 & 2 \\ 2 & 2
    \end{pmatrix},\begin{pmatrix}
        3 & 2 \\ 2 & 1
    \end{pmatrix}, \begin{pmatrix}
        3 & 2 \\ 1 & 1
    \end{pmatrix}, \begin{pmatrix}
        3 & 1 \\ 2 & 1
    \end{pmatrix}. 
\end{align*}
In order to verify the assertion in \eqref{0=G-6}, it suffices to show that 
there is no $(\widetilde{H},\widetilde{\chi})$-equivariant distribution on $G_{R}$ for each $G_R$ in \eqref{G-G6}. We will do this case by case.

\subsubsection{Case:\ $R=\begin{pmatrix}
    4 & 2 \\ 2 & 0
\end{pmatrix}$}\
In this case, every matrix in the given $G_R$ is in the same $H$-orbit as elements in $\mathfrak{G}_R$ given by
\begin{align*}
    g=\begin{pmatrix}
    1_2 & 0 & 0\\ 0 & 0 & x \\ 0 & y & 0
\end{pmatrix},\;x, y\in\GL_2(F). 
\end{align*}
Write
\begin{align*}
    \mathfrak{G}_R^{'}=\left\{g=\begin{pmatrix}
    1_2 & 0 & 0\\ 0 & 0 & x \\ 0 & y & 0
\end{pmatrix}\in\mathfrak{G}_R \colon x= y   \right\}.
\end{align*}
On $H(\mathfrak{G}_R\setminus\mathfrak{G}_{R}^{'})$, write
\begin{align*}
    u=\begin{pmatrix}
        1_2 & 0 & 0 \\ 0 & 1_2 & c \\ 0 & 0 &1_2
    \end{pmatrix}\quad {\rm and}\quad 
    v=\begin{pmatrix}
        1_2 & 0 & 0 \\ 0 & 1_2 & 0 \\ 0 & x^{-1}cy &1_2
    \end{pmatrix}.
\end{align*}
It is easy to check that 
$ug=gv$, i.e. $(u,v^{-\tau})g=g$. Since $g\notin \mathfrak{G}_R^{'}$,
\begin{align*}
    \chi(u,v^{-\tau})=\psi_{F}(\Tr(c))\psi_{F}(\Tr(-x^{-1}cy))=\psi_{F}(\Tr(1_2-yx^{-1})c),
\end{align*}
which is not trivial for suitable $c$.
By Remark \ref{rem .2}, we must have that $\mathcal{D}^{'}_{\chi}(H\mathfrak{G}_R\setminus H\mathfrak{G}_R^{'})=0$.

On $H\mathfrak{G}_R^{'}$, we take $(h_1,h_2)=(1_6,1_6)$. It is clear that 
$h_1g=g^{\tau}h_2$, i.e. $(h_1,h_2^{-\tau})g=g^{\tau}$ and $\chi(h_1,h_2^{-\tau})=1$. 
By Lemma \ref{lem 2.7}, the $H$-orbit $Hg$ is $\tau$-invariant and any $(H,\chi)$-equivariant distribution on this $H$ orbit is $\tau$-invariant. In other words, there is no non-zero $(\widetilde{H},\widetilde{\chi})$-equivariant distribution on this orbit. By Remark \ref{rem 2.3}, we must have that $\mathcal{D}^{'}_{\widetilde{\chi}}(H\mathfrak{G}_R^{'})=0$.

\subsubsection{Case:\ $R=\begin{pmatrix}
    2 & 0 \\ 0 & 0
\end{pmatrix}$}\
In this case, every matrix in the given $G_R$ is in the same $H$-orbit as elements in $\mathfrak{G}_R$ given by
\begin{align*}
    g=\begin{pmatrix}
    0 & 0 & x\\ 0 & 1_2 & 0 \\ y & 0 & 0
\end{pmatrix},\;x, y\in\GL_2(F).  
\end{align*}
Write
\begin{align*}
    \mathfrak{G}_R^{'}=\left\{\begin{pmatrix}
    0 & 0 & 1_2\\ 0 & 1_2 & 0 \\ 1_2 & 0 & 0
\end{pmatrix}   \right\} \subset \mathfrak{G}_R.
\end{align*}
On $H(\mathfrak{G}_R\setminus\mathfrak{G}_{R}^{'})$, write
\begin{align*}
    u=\begin{pmatrix}
        1_2 & b & 0 \\ 0 & 1_2 & c \\ 0 & 0 &1_2
    \end{pmatrix}\quad {\rm and}\quad 
    v=\begin{pmatrix}
        1_2 & 0 & 0 \\ cy & 1_2 &0 \\ 0 & x^{-1}b &1_2
    \end{pmatrix}.
\end{align*}
It is clear that $ug=gv$, i.e. $(u,v^{-\tau})g=g$. 
Then we have that 
\begin{align*}
    \chi(u,v^{-\tau})=\psi_{F}(\Tr((1_2-y)c+ (1_2-x^{-1})b)),
\end{align*}
which is not trivial for suitable $b$ and $c$. 
By Remark \ref{rem 2.3}, we deduce that $\mathcal{D}^{'}_{\chi}\left(H \left( \mathfrak{G}_R\setminus\mathfrak{G}_R^{'} 
 \right)   
 \right)=0$.

On $H\mathfrak{G}_R^{'}$, we take that $(h_1,h_2)=(1_6,1_6)$. Then  
$h_1g=g^{\tau}h_2$, i.e. $(h_1,h_2^{-\tau})g=g^{\tau}$ 
and
$\chi(h_1,h_2^{-\tau})=1$. 
By Lemma \ref{lem 2.7}, the $H$-orbit $Hg$ is $\tau$-invariant and any $(H,\chi)$-equivariant distribution on this $H$ orbit is $\tau$-invariant. In other words, there is no non-zero $(\widetilde{H},\widetilde{\chi})$-equivariant distribution on this orbit. By Remark \ref{rem 2.3}, we deduce that $\mathcal{D}^{'}_{\widetilde{\chi}}(H\mathfrak{G}_R^{'})=0$.

\subsubsection{Case:\ $R=\begin{pmatrix}
    2 & 2 \\ 0 & 0
\end{pmatrix}$ or $\begin{pmatrix}
    2 & 0 \\ 2 & 0
\end{pmatrix}$
}\ 
The two cases can be treated in the same way. We only provide the details for $G_R$ with $R=\begin{pmatrix}
    2 & 2 \\ 0 & 0
\end{pmatrix}$. 
In this case, every matrix in the given $G_R$ is in the same $H$-orbit as elements in $\mathfrak{G}_R$ given by
\begin{align*}
    g=\begin{pmatrix}
    0 & x & 0\\ 0 & 0 & 1_2 \\ y & 0 & 0
\end{pmatrix},\;x, y\in\GL_2(F). 
\end{align*}
Write
\begin{align*}
    u=\begin{pmatrix}
        1_2 & 0 & 0 \\ 0 & 1_2 & c \\ 0 & 0 &1_2
    \end{pmatrix}\quad {\rm and}\quad 
    v=\begin{pmatrix}
        1_2 & 0 & 0 \\ 0 & 1_2 & 0 \\ cy & 0 &1_2
    \end{pmatrix}.
\end{align*}
It is easy to see that 
$ug=gv$, i.e. $(u,v^{-\tau})g=g$ and $\chi(u,v^{-\tau})=\psi_{F}(\Tr(c))$, 
which is not trivial for suitable $c$. Then by Remark \ref{rem 2.3}, we must have that $\mathcal{D}^{'}_{\chi}(G_R)=0$.

\subsubsection{Case:\ $R=\begin{pmatrix}
    2 & 2 \\ 2 & 2
\end{pmatrix}$}\ 
In this case, every matrix in the given $G_R$ is in the same $H$-orbit as elements in $\mathfrak{G}_R$ given by
\begin{align*}
    g=\begin{pmatrix}
    0 & x & 0\\ y & 0 & 0 \\ 0 & 0 & 1_2
\end{pmatrix},\;x, y\in\GL_2(F). 
\end{align*}
Write
\begin{align*}
    \mathfrak{G}_R^{'}=\left\{g=\begin{pmatrix}
    0 & x & 0\\ y & 0 & 0 \\ 0 & 0 & 1_2
\end{pmatrix}\in\mathfrak{G}_R \colon x= y   \right\}.
\end{align*}
On $H(\mathfrak{G}_R\setminus\mathfrak{G}_{R}^{'})$, write
\begin{align*}
    u=\begin{pmatrix}
        1_2 & b & 0 \\ 0 & 1_2 & 0 \\ 0 & 0 &1_2
    \end{pmatrix}\quad {\rm and}\quad     v=\begin{pmatrix}
        1_2 & 0 & 0 \\ x^{-1}by & 1_2 & 0 \\ 0 & 0 &1_2
    \end{pmatrix}.
\end{align*}
It can be checked by direct computation that 
$ug=gv$, i.e. $(u,v^{-\tau})g=g$. 
Since $g\notin \mathfrak{G}_R^{'}$,
\begin{align*}
    \chi(u,v^{-\tau})=\psi_{F}(\Tr(b))\psi_{F}(\Tr(-x^{-1}by))=\psi_{F}(\Tr(1_2-yx^{-1})b),
\end{align*}
which is not trivial for suitable $b$. 
By Remark \ref{rem .2}, we must have that $\mathcal{D}^{'}_{\chi}(H\mathfrak{G}_R\setminus H\mathfrak{G}_R^{'})=0$.

On $H\mathfrak{G}_R^{'}$, we take that $(h_1,h_2)=(1_6,1_6)$. Then  
$h_1g=g^{\tau}h_2$ i.e.$(h_1,h_2^{-\tau})g=g^{\tau}$
and
$\chi(h_1,h_2^{-\tau})=1$. 
By Lemma \ref{lem 2.7}, the $H$-orbit $Hg$ is $\tau$-invariant and any $(H,\chi)$-equivariant distribution on this $H$ orbit is $\tau$-invariant. In other words, there is no non-zero $(\widetilde{H},\widetilde{\chi})$-equivariant distribution on this orbit. By Remark \ref{rem 2.3}, we deduce that $\mathcal{D}^{'}_{\widetilde{\chi}}(H\mathfrak{G}_R^{'})=0$.

\subsubsection{Case:\ $R=\begin{pmatrix}
    2 & 2 \\ 1 & 1
\end{pmatrix}$ or $\begin{pmatrix}
    2 & 1 \\ 2 & 1
\end{pmatrix}$
}\ 
The two cases can be treated in the same way. We only provide the details for $G_R$ with $R=\begin{pmatrix}
    2 & 2 \\ 1 & 1
\end{pmatrix}$.
In this case, every matrix in the given $G_R$ is in the same $H$-orbit as elements in $\mathfrak{G}_R$ given by
\begin{align*}
    g=\begin{pmatrix}
    x & 0 & 0\\ 0 & 1_2 & 0 \\ 0 & 0 & y
\end{pmatrix}x_0   \begin{pmatrix}
    z & 0 & 0\\ 0 & 1_2 & 0 \\ 0 & 0 & w
\end{pmatrix}, \quad {\rm with}\quad x_0=\begin{pmatrix}
        0  & 1_2 & 0 \\ E_{22} & 0 & E_{11} \\ E_{11} & 0 & E_{22}
    \end{pmatrix},
\end{align*}
where $x, y,z,w\in\GL_2(F)$
and  $E_{ij}(1\leq i,j\leq2)$ to be the $2\times 2$ matrix with entry $1$ at $(i,j)$ place and $0$ otherwise.
Write
\small{\begin{align*}
    u=\begin{pmatrix}
        1_2 & 0 & \begin{pmatrix}
            d_{11} & 0 \\
            d_{21} & 0
        \end{pmatrix}y^{-1} \\ 0 & 1_2 & 0
        \\ 0 & 0 &1_2
    \end{pmatrix}\quad {\rm and }\quad 
    v=\begin{pmatrix}
        1_2 & 0 & 0 \\ x^{-1}\begin{pmatrix}
            d_{11} & 0 \\
            d_{21} & 0
        \end{pmatrix}z & 1_2 & 0 \\ 0 & 0 &1_2
    \end{pmatrix},
\end{align*}}
We deduce easily that $ug=gv$, i.e. $(u,v^{-\tau})g=g$     
and that 
\begin{align*}
    \chi(u,v^{-\tau})=\psi_{F}(-\mathrm{Tr}( x^{-1}\begin{pmatrix}
            d_{11} & 0 \\
            d_{21} & 0
        \end{pmatrix}z))=\psi_{F}(-(zx^{-1})_{11}d_{11}-(zx^{-1})_{12}d_{21}),
\end{align*}
which is not trivial for suitably chosen $d_{11}$ and $d_{21}$, because $(zx^{-1})_{11}$ and $(zx^{-1})_{12}$ can not both be $0$, where $(zx)^{-1}_{11}$ and $(zx^{-1})_{12}$ are the $(1,1)$-entry and $(1,2)$-entry of $zx^{-1}$ respectively. By Remark \ref{rem 2.3}, we must have that $\mathcal{D}^{'}_{\chi}(G_R)=0$.

\subsubsection{Case:\ $R=\begin{pmatrix}
    2 & 1 \\ 0 & 0
\end{pmatrix}$ or $\begin{pmatrix}
    2 & 0 \\ 1 & 0
\end{pmatrix}$
}\ 
The two cases can be treated in the same way. We only provide the details for $G_R$ with $R=\begin{pmatrix}
    2 & 1 \\ 0 & 0
\end{pmatrix}$
In this case, every matrix in the given $G_R$ is in the same $H$-orbit as elements in $\mathfrak{G}_R$ given by
\begin{align*}
    g=\begin{pmatrix}
    x & 0 & 0\\ 0 & y & 0 \\ 0 & 0 & 1_2
\end{pmatrix}x_0   \begin{pmatrix}
    z & 0 & 0\\ 0 & w & 0 \\ 0 & 0 & 1_2
\end{pmatrix},\quad  {\rm with}\quad x_0=\begin{pmatrix}
        0  & E_{11} & E_{22} \\ 0 & E_{22} & E_{11} \\ 1_2 & 0 & 0
    \end{pmatrix}
\end{align*}
where $x, y,z,w\in\GL_2(F)$. 
Write
\small{\begin{align*}
    u=\begin{pmatrix}
        1_2 & 0 & 0 \\ 0 & 1_2 & y\begin{pmatrix}
            c_{11} & c_{12} \\ 0 & 0
        \end{pmatrix} \\ 0 & 0 &1_2
    \end{pmatrix}\quad {\rm and}\quad 
    v=\begin{pmatrix}
        1_2 & 0 & 0 \\ 0 & 1_2 & 0 \\ \begin{pmatrix}
            c_{11} & c_{12} \\ 0 & 0
        \end{pmatrix}z & 0 &1_2
    \end{pmatrix}.
\end{align*}}
One can easily check that $ug=gv$, i.e. $(u,v^{-\tau})g=g,$
and that 
\begin{align*}
    \chi(u,v^{-\tau})=\psi_{F}(y_{11}c_{11}+y_{21}c_{12}) \neq 1
\end{align*}
 for suitably chosen $c_{11}, c_{12} \in F$.  By Remark \ref{rem 2.3}, we must have that $\mathcal{D}^{'}_{\chi}(G_R)=0$, by the same argument.

\subsubsection{Case:\ $R=\begin{pmatrix}
    2 & 1 \\ 1 & 1
\end{pmatrix}$}\ 
In this case, every matrix in the given $G_R$ is in the same $H$ orbit as elements in $\mathfrak{G}_R$, given by 
\begin{align*}
    g=\begin{pmatrix}
        x  & 0 & 0 \\ 0 & y & 0 \\ 0 & 0 & 1_2
    \end{pmatrix}x_0\begin{pmatrix}
        z  & 0 & 0 \\ 0 & w & 0 \\ 0 & 0 & 1_2
    \end{pmatrix},\quad {\rm with }\quad x_0=\begin{pmatrix}
        0  & E_{11} & E_{22} \\ E_{11} & E_{22} & 0 \\ E_{22} & 0 & E_{11}
    \end{pmatrix}, 
\end{align*}
where $ x,y,z,w \in \GL_2(F).$ 
Take $\mathfrak{G}_R^{\prime}$ to be the subset of the given $\mathfrak{G}_R$ that consists of elements $g\in \mathfrak{G}_R$ with the property that 
$(y^{-1}x)_{11}=(zw^{-1})_{11}$, $(y^{-1}x)_{12}=0$, $(y^{-1}x)_{22}=w_{22}$, $(zw^{-1})_{21}=0$, and $(zw^{-1})_{22}=y_{22}$. 
On the submanifold, $H(\mathfrak{G}_R \setminus \mathfrak{G}_R^{\prime})$, write
\small{\begin{align*}
    u=\begin{pmatrix}
        1_2 & x\begin{pmatrix}
            b_{11} & 0\\
            b_{21} & b_{22}
        \end{pmatrix}y^{-1} & x\begin{pmatrix}
            0 & d_{12} \\ 0 & d_{22}
        \end{pmatrix}\\
        0 & 1_2 & y\begin{pmatrix}
            0 & 0\\
            0 & c_{22}
        \end{pmatrix}\\
        0 & 0 & 1_2
    \end{pmatrix}\quad {\rm and}\quad 
    v= \begin{pmatrix}
        1_2 & 0 & 0\\
        w^{-1}\begin{pmatrix}
            b_{11} & d_{12}\\
            0 & c_{22}
        \end{pmatrix}z & 1_2 & 0\\
          \begin{pmatrix}
            0 & 0\\
            b_{21} & d_{22}
        \end{pmatrix}z & \begin{pmatrix}
            0 & 0\\
            0 & b_{22}
        \end{pmatrix}w & 1_2
    \end{pmatrix}.
\end{align*}}
Then we have that $ug=gv, \; \mathrm{i.e.} \; (u,v^{-\tau})g=g.$
Since $g \notin \mathfrak{G}_R^{\prime}$, we obtain that 
\begin{align*}
    \chi(u,v^{-\tau})&=\psi((y^{-1}x)_{11}b_{11}+(y^{-1}x)_{12}b_{21}+(y^{-1}x)_{22}b_{22}+y_{22}c_{22}\\
    &\qquad -(zw^{-1})_{11}b_{11}
    -(zw^{-1})_{21}d_{12}-(zw^{-1})_{22}c_{22}-w_{22}b_{22}) \neq 1.
\end{align*}
for suitably chosen $b_{11},b_{21},b_{22},c_{22} \in F$.
By Remark \ref{rem .2}, we obtain that $\mathcal{D}^{'}_{\chi}\left(H\left(\mathfrak{G}_R\setminus\mathfrak{G}_R^{'}\right)\right)=0$.

On the submanifold $H \mathfrak{G}_R^{\prime}$, write
\begin{align*}
    h_1=\left(\begin{smallmatrix}
        a & z^{\tau}\left(\begin{smallmatrix}
        0 & (y^{\tau}a^{\tau} w^{-1})_{12} \\ 0 & 0
    \end{smallmatrix}\right)y^{-1} & z^{\tau}\left(\begin{smallmatrix}
        0 & 0 \\ a_{12} & 0
    \end{smallmatrix}\right) \\ 0 & a & w^{\tau}\left(\begin{smallmatrix}
        0 & (x^{\tau}a^{\tau}z^{-1})_{12} \\
        0 & 0
    \end{smallmatrix}\right) \\ 0 & 0 & a
    \end{smallmatrix} \right)\ {\rm and}\ 
    h_2=h_1^{\tau}=\left(\begin{smallmatrix}
        a^{\tau} & 0 & 0 \\ y^{-\tau}\left(\begin{smallmatrix}
            0 & 0 \\
            (w^{-\tau}ay)_{21} & 0
        \end{smallmatrix}\right)z & a^{\tau} & 0\\ \left(\begin{smallmatrix}
        0 & a_{12} \\ 0 & 0
    \end{smallmatrix}\right)z & \left(\begin{smallmatrix}
        0 & 0 \\
        (z^{-\tau}ax)_{21} & 0
    \end{smallmatrix}\right)w & a^{\tau}
    \end{smallmatrix}\right),
\end{align*}
where
$    a=\begin{pmatrix}
        1 & a_{12} \\ 0 & 1
    \end{pmatrix}, \; \mathrm{with} \; a_{12}=\frac{w_{21}}{w_{22}}-\frac{y_{12}}{y_{22}}$. 
Then have that $h_1g=g^{\tau}h_2$, i.e. $(h_1,h_2^{-\tau})g=g^{\tau}.$
It is clear that $\chi(h_1,h_2^{-\tau})=1$. 
By Lemma \ref{lem 2.7}, the $H$ orbit $Hg$ is $\tau$-invariant and any $(H,\chi)$-equivariant distribution on this $H$ orbit is $\tau$-invariant. In other words, there is no non-zero $(\widetilde{H},\widetilde{\chi})$-equivariant distribution on this orbit. By Remark \ref{rem 2.3}, we deduce that $\mathcal{D}^{'}_{\widetilde{\chi}}(H\mathfrak{G}_R^{'})=0$.

\subsubsection{Case:\ $R=\begin{pmatrix}
    3 & 1 \\ 1 & 0
\end{pmatrix}$}\ 
In this case, every matrix in the given $G_R$ is in the same $H$ orbit as elements in $\mathfrak{G}_R$, given by 
\begin{align*}
    g=\begin{pmatrix}
        x  & 0 & 0 \\ 0 & y & 0 \\ 0 & 0 & 1_2
    \end{pmatrix}x_0\begin{pmatrix}
        z  & 0 & 0 \\ 0 & w & 0 \\ 0 & 0 & 1_2
    \end{pmatrix},\quad {\rm with}\quad x_0=\begin{pmatrix}
        E_{11}  & 0 & E_{22} \\ 0 & E_{22} & E_{11} \\ E_{22} & E_{11} & 0
    \end{pmatrix},
\end{align*}
where $ x,y,z,w \in \GL_2(F).$ 
Write
\begin{align*}
    \mathfrak{G}_R^{\prime}=\{ 
    g \in \mathfrak{G}_R \colon y_{21}=w_{12}=0, \; y_{11}=w_{11} ,\;(zw^{-1})_{22}=y_{22} \; \mathrm{and} \; (y^{-1}x)_{22}=w_{22}.
    \}.
\end{align*}
On the submanifold $H(\mathfrak{G}_R \setminus \mathfrak{G}_R^{\prime})$, write
\small{\begin{align*}
    u=\begin{pmatrix}
        1_2 & x\begin{pmatrix}
            0 & 0\\
            0 & b_{22}
        \end{pmatrix}y^{-1} & x\begin{pmatrix}
            0 & 0\\ d_{21} & d_{22}
        \end{pmatrix}\\
        0 & 1_2 & y\begin{pmatrix}
            c_{11} & c_{12} \\
            0 & c_{22}
        \end{pmatrix}\\
        0 & 0 & 1_2
    \end{pmatrix}\quad {\rm and }\quad 
    v= \begin{pmatrix}
        1_2 & 0 & 0\\
        w^{-1}\begin{pmatrix}
            0 & 0\\
            0 & c_{22}
        \end{pmatrix}z & 1_2 & 0\\
          \begin{pmatrix}
            0 & c_{12}\\
            0 & d_{22}
        \end{pmatrix}z & \begin{pmatrix}
            c_{11} & 0\\
            d_{21} & b_{22}
        \end{pmatrix}w & 1_2
    \end{pmatrix}.
\end{align*}}
Then we have that $ug=gv, \; \mathrm{i.e.} \; (u,v^{-\tau})g=g.$
Since $g \notin \mathfrak{G}_R^{\prime}$,
\begin{align*}
    \chi(u,v^{-\tau})&=\psi((y^{-1}x)_{11}b_{22}+y_{11}c_{11}+y_{21}c_{12}+y_{22}c_{22}-(zw^{-1})_{22}c_{22}-c_{11}w_{11}-d_{21}w_{12}-b_{22}w_{22}) \neq 1.
\end{align*}
for suitably chosen $b_{22},c_{11},c_{12},c_{22}$ and $d_{21}\in F$. By Remark \ref{rem .2}, we get that $\mathcal{D}^{'}_{\chi}\left(H\left(\mathfrak{G}_R\setminus\mathfrak{G}_R^{'}  
 \right)\right)=0$.

On the submanifold $H \mathfrak{G}_R^{\prime}$, write
\small{\begin{align*}
    h_1=\begin{pmatrix}
        a & \begin{pmatrix}
            a_{12}z_{21} & 0 \\ a_{12}z_{22} & 0
        \end{pmatrix}y^{-1} & z^{\tau}\begin{pmatrix}
        0 & (x^{\tau}\alpha z^{-1})_{12} \\ 0 & 0
    \end{pmatrix} \\ 0 & a & w^{\tau}\begin{pmatrix}
        0 & 0 \\ (y^{\tau}\alpha w^{-1})_{21} & 0
    \end{pmatrix}  \\ 0 & 0 & a
    \end{pmatrix} ,
\end{align*}}
and
\small{\begin{align*}
    h_2=h_1^{\tau}=\begin{pmatrix}
        a^{\tau} &0 & 0 \\  y^{-\tau}\begin{pmatrix}
            a_{12}z_{21} & a_{12}z_{22} \\ 0 & 0
        \end{pmatrix} & a^{\tau} & 0\\ \begin{pmatrix}
            0 & 0 \\ (z^{-\tau}ax)_{21} & 0
        \end{pmatrix}z & \begin{pmatrix}
            0 & (w^{-\tau}ay)_{12} \\ 0 & 0
        \end{pmatrix}w  & a^{\tau}
    \end{pmatrix},
\end{align*}}
where $a=\begin{pmatrix}
        1 & a_{12}\\ 0 & 1
    \end{pmatrix}$, with $a_{12}=\frac{z_{21}-x_{12}}{x_{22}}$. 
Then we have that $h_1g=g^{\tau}h_2$, i.e. $(h_1,h_2^{-\tau})g=g^{\tau}.$
It is clear that $\chi(h_1,h_2^{-\tau})=1$
By Lemma \ref{lem 2.7}, the $H$ orbit $Hg$ is $\tau$-invariant and any $(H,\chi)$-equivariant distribution on this $H$ orbit is $\tau$-invariant. In other words, there is no non-zero $(\widetilde{H},\widetilde{\chi})$-equivariant distribution on this orbit. By \ref{rem 2.3}, we obtain that there is no $(\widetilde{H},\widetilde{\chi})$-equivariant distribution on $H \mathfrak{G}_R^{\prime}$.

\subsubsection{ Case:\ $R=\begin{pmatrix}
    3 & 1 \\ 2 & 0
\end{pmatrix}$ or $R=\begin{pmatrix}
    3 & 2 \\ 1 & 0
\end{pmatrix}$
}\ 
The two cases can be treated in the same way. We only provide the details for $G_R$ with $R=\begin{pmatrix}
    3 & 1 \\ 2 & 0
\end{pmatrix}$.
In this case, every matrix in the given $G_R$ is in the same $H$-orbit as elements in $\mathfrak{G}_R$ given by
\begin{align*}
   g=\begin{pmatrix}
    x & 0 & 0\\ 0 & y & 0 \\ 0 & 0 & 1_2
\end{pmatrix}x_0   \begin{pmatrix}
    z & 0 & 0\\ 0 & w & 0 \\ 0 & 0 & 1_2
\end{pmatrix}, \quad {\rm with}\quad x_0=\begin{pmatrix}
        E_{11}  & E_{22} & 0 \\ 0 & 0 & 1_2 \\ E_{22} & E_{11} & 0
    \end{pmatrix},
\end{align*}
where $x, y,z,w\in\GL_2(F)$. 
Write
\begin{align*}
    u=\begin{pmatrix}
        1_2 & 0 & 0 \\ 0 & 1_2 & y\begin{pmatrix}
            0 & c_{12} \\ 0 & c_{22}
        \end{pmatrix} \\ 0 & 0 &1_2
    \end{pmatrix}
\quad {\rm and}\quad 
    v=\begin{pmatrix}
        1_2 & 0 & 0 \\ 0 & 1_2 & 0 \\ \begin{pmatrix}
            0 & c_{12} \\ 0 & c_{22}
        \end{pmatrix}z & 0 &1_2
    \end{pmatrix}.
\end{align*}
Then we have that $ug=gv$, i.e. $(u,v^{-\tau})g=g$
and that $\chi(u,v^{-\tau})=\psi_{F}(y_{21}c_{12}+y_{22}c_{22}) \neq 1$
for suitably chosen $c_{12}, c_{22} \in F$, because $y_{21}$ and $y_{22}$ can not both be $0$. By Remark \ref{rem 2.3}, we deduce that there is no non-zero $(H,\chi)$-euqivariant distribution on $H\mathfrak{G}_{R}=G_R$.

\subsubsection{ Case:\ $R=\begin{pmatrix}
    3 & 1 \\ 1 & 1
\end{pmatrix}$}\ 
In this case, every matrix in the given $G_R$ is in the same $H$ orbit as elements in $\mathfrak{G}_R$, given by 
\begin{align*}
    g=\begin{pmatrix}
        x  & 0 & 0 \\ 0 & y & 0 \\ 0 & 0 & 1_2
    \end{pmatrix}x_0\begin{pmatrix}
        z  & 0 & 0 \\ 0 & w & 0 \\ 0 & 0 & 1_2
    \end{pmatrix}, \quad {\rm with}\quad x_0=\begin{pmatrix}
        E_{11}  & 0 & E_{22} \\ 0 & 1_2 & 0 \\ E_{22} & 0 & E_{11}
    \end{pmatrix},
\end{align*}
where $x,y,z,w \in \GL_2(F)$. 
Write
\begin{align*}
    \mathfrak{G}_R^{\prime}=\{ 
    g \in \mathfrak{G}_R \colon (y^{-1}x)_{12}=w_{12}, (y^{-1}x)_{22}=w_{22}, (zw^{-1})_{21}=y_{21}, \; \mathrm{and} \;(zw^{-1})_{22}=y_{22}
    \}.
\end{align*}
On the submanifold, $H(\mathfrak{G}_R \setminus \mathfrak{G}_R^{\prime})$, write
\small{\begin{align*}
    u=\begin{pmatrix}
        1_2 & x\begin{pmatrix}
            0 & 0\\
            b_{21} & b_{22}
        \end{pmatrix}y^{-1} & 0\\
        0 & 1_2 & y\begin{pmatrix}
            0 & c_{12}\\
            0 & c_{22}
        \end{pmatrix}\\
        0 & 0 & 1_2
    \end{pmatrix}
\quad {\rm and}\quad 
    v= \begin{pmatrix}
        1_2 & 0 & 0\\
        w^{-1}\begin{pmatrix}
            0 & c_{12}\\
            0 & c_{22}
        \end{pmatrix}z & 1_2 & 0\\
        0 & \begin{pmatrix}
            0 & 0\\
            b_{21} & b_{22}
        \end{pmatrix}w & 1_2
    \end{pmatrix}.
\end{align*}}
Then we have that $ug=gv,$ i.e. $(u,v^{-\tau})g=g.$
Since $g \notin \mathfrak{G}_R^{\prime}$,
\begin{align*}
    &\chi(u,v^{-\tau})\\
    &\quad =\psi((y^{-1}x)_{12}b_{21}+(y^{-1}x)_{22}b_{22}+y_{21}c_{12}+y_{22}c_{22}-(zw^{-1})_{21}c_{12}-(zw^{-1})_{22}c_{22}-w_{12}b_{21}-w_{22}b_{22}) \\
    &\quad \neq 1.
\end{align*}
for suitably chosen $b_{21}, b_{22}, c_{12}$ and $c_{22}$. 
By Remark \ref{rem .2}, we deduce that there is no non-zero $(H, \chi)$-equivariant distribution on $H(\mathfrak{G}_R \setminus \mathfrak{G}_R^{\prime})$.

On the submanifold $H \mathfrak{G}_R^{\prime}$, write
\small{\begin{align*}
    h_1=\begin{pmatrix}
        a & 0 & z^{\tau}\begin{pmatrix}
        0 & (x^{\tau}a^{\tau} z^{-1})_{12} \\ a_{12} & 0
    \end{pmatrix} \\ 0 & a & 0 \\ 0 & 0 & a
    \end{pmatrix}\quad  {\rm and}\quad  
    h_2=h_1^{\tau}=\begin{pmatrix}
        a^{\tau} & 0 & 0 \\ 0 & a^{\tau} & 0\\ \begin{pmatrix}
        0 & a_{12} \\ ((z^{\tau})^{-1}ax)_{21} & 0
    \end{pmatrix}z & 0 & a^{\tau}
    \end{pmatrix},
\end{align*}}
where $a=\begin{pmatrix}
        a_{11} & a_{12} \\ 0 & a_{22}
    \end{pmatrix}$, with $a_{11}(yw)_{12}+a_{12}(yw)_{22}=a_{22}(yw)_{21}$.
Then we have that $h_1g=g^{\tau}h_2$, i.e. $(h_1,h_2^{-\tau})g=g^{\tau}$. 
It is clear that $\chi(h_1,h_2^{-\tau})=1.$
By Lemma \ref{lem 2.7}, the $H$ orbit $Hg$ is $\tau$-invariant and any $(H,\chi)$-equivariant distribution on this $H$ orbit is $\tau$-invariant. In other words, there is no non-zero $(\widetilde{H},\widetilde{\chi})$-equivariant distribution on this orbit. By Remark \ref{rem 2.3}, we get that there is no non-zero $(\widetilde{H},\widetilde{\chi})$-equivariant distribution on $H \mathfrak{G}_R^{\prime}$.

\subsubsection{Case:\ $R=\begin{pmatrix}
        2 & 1 \\ 1 & 0 
    \end{pmatrix}$}\ 
In this case, every matrix in the given $G_R$ is in the same $H$ orbit as elements in $\mathfrak{G}_R$, given by 
\begin{align*}
    g=\begin{pmatrix}
        x  & 0 & 0 \\ 0 & y & 0 \\ 0 & 0 & 1_2
    \end{pmatrix}x_0\begin{pmatrix}
        z  & 0 & 0 \\ 0 & w & 0 \\ 0 & 0 & 1_2
    \end{pmatrix},\quad {\rm with}\quad x_0=\begin{pmatrix}
        0  & E_{12} & E_{22} \\ E_{21} & 0 & E_{11} \\ E_{22} & E_{11} & 0
    \end{pmatrix},
\end{align*}
where $x,y,z,w \in \GL_2(F)$. 
Write $\mathfrak{G}_R^{\prime}$ to be the subset of $\mathfrak{G}_R$ that consists of elements with the following properties:
\begin{align*}
      y_{11}=w_{11},\ (y^{-1}x)_{21}=(zw^{-1})_{12},\ (y^{-1}x)_{22}=0,\ (zw^{-1})_{22}=0,\ y_{21}=0,\ \mathrm{and} \ w_{12}=0.
\end{align*}
On the submanifold $H(\mathfrak{G}_R \setminus \mathfrak{G}_R^{\prime})$, write
\small{\begin{align*}
    u=\begin{pmatrix}
        1_2 & x\begin{pmatrix}
            0 & b_{12}\\
            0 & b_{22}
        \end{pmatrix}y^{-1} & x\begin{pmatrix}
            0 & d_{12} \\ d_{21} & 0
        \end{pmatrix}\\
        0 & 1_2 & y\begin{pmatrix}
            c_{11} & c_{12}\\
            0 & 0
        \end{pmatrix}\\
        0 & 0 & 1_2
    \end{pmatrix}\quad {\rm and}\quad 
    v= \begin{pmatrix}
        1_2 & 0 & 0\\
        w^{-1}\begin{pmatrix}
            0 & 0\\
            b_{12} & d_{12}
        \end{pmatrix}z & 1_2 & 0\\
          \begin{pmatrix}
            0 & c_{12}\\
            b_{22} & 0
        \end{pmatrix}z & \begin{pmatrix}
            c_{11} & 0\\
            d_{21} & 0
        \end{pmatrix}w & 1_2
    \end{pmatrix}.
\end{align*}}
Then have that $ug=gv$, i.e. $(u,v^{-\tau})g=g$.
Since $g \notin \mathfrak{G}_R^{\prime}$,
\begin{align*}
    \chi(u,v^{-\tau})&=\psi((y^{-1}x)_{21}b_{12}+(y^{-1}x)_{22}b_{22}+y_{11}c_{11}+y_{21}c_{12}\\
    &\qquad -(zw^{-1})_{12}b_{12}
    -(zw^{-1})_{22}d_{12}-w_{11}c_{11}-w_{12}d_{21}) \neq 1.
\end{align*}
for suitably chosen $b_{12}, b_{22}, c_{11},c_{12}, d_{12}, d_{21} \in F$. 
By Remark \ref{rem .2}, we get that there is no non-zero $(H, \chi)$-equivariant distribution on $H(\mathfrak{G}_R \setminus \mathfrak{G}_R^{\prime})$.

On the submanifold $H \mathfrak{G}_R^{\prime}$, write
\small{\begin{align*}
    h_1=\begin{pmatrix}
        a & 0 & z^{\tau}\begin{pmatrix}
        (y^{\tau}aw^{-1})_{21} & 0 \\ 0 & 0
    \end{pmatrix} \\ 0 & a & w^{\tau}\begin{pmatrix}
        0 & 0 \\
        0 & (x^{\tau}az^{-1})_{12}
    \end{pmatrix} \\ 0 & 0 & a
    \end{pmatrix},\quad 
    h_2=h_1^{\tau}=\begin{pmatrix}
        a & 0 & 0 \\ 0 & a & 0\\ \begin{pmatrix}
        (w^{-\tau}ay)_{12} & 0 \\ 0 & 0
    \end{pmatrix}z & \begin{pmatrix}
        0 & 0 \\
        0 & (z^{-\tau}ax)_{21}
    \end{pmatrix}w & a
    \end{pmatrix},
\end{align*}}
where $a=\begin{pmatrix}
        1 & 0 \\ 0 & \frac{x_{12}}{z_{21}}
    \end{pmatrix}$. 
Then we have that $h_1g=g^{\tau}h_2$, i.e. $(h_1,h_2^{-\tau})g=g^{\tau}$.
It is clear that $\chi(h_1,h_2^{-\tau})=1$.
By Lemma \ref{lem 2.7}, the $H$ orbit $Hg$ is $\tau$-invariant and any $(H,\chi)$-equivariant distribution on this $H$ orbit is $\tau$-invariant. In other words, there is no non-zero $(\widetilde{H},\widetilde{\chi})$-equivariant distribution on this orbit. By Remark \ref{rem 2.3}, we obtain that there is no $(\widetilde{H},\widetilde{\chi})$-equivariant distribution on $H \mathfrak{G}_R^{\prime}$.

\bibliographystyle{alpha}
	\bibliography{models}

\end{document}